\newtheorem{thm}{Theorem}[section]
\newtheorem {asp}{Assumption}[section]
\newtheorem{lm}{Lemma}[section]
\newtheorem{rmk}{Remark}[section]
\newtheorem{deff}{Definition}[section]
\theoremstyle{definition}
\theoremstyle{remark}
\newtheorem{exam}{Example}[section]
\numberwithin{equation}{section}
\DeclareMathOperator{\trace}{tr}
\DeclareMathOperator{\sgn}{sgn}
\newcommand{\eps}{\varepsilon}
\newcommand{\A}{\mathcal{A}}
\newcommand{\B}{\mathcal{B}}
\newcommand{\fB}{\mathfrak{B}}
\newcommand{\p}{\mathfrak{p}}
\newcommand{\m}{\mathfrak{m}}
\newcommand{\C}{\mathcal{C}}
\newcommand{\D}{\mathcal{D}}
\newcommand{\DD}{\mathbb{D}}
\newcommand{\F}{\mathcal{F}}
\newcommand{\E}{\mathbb{E}}
\newcommand{\fE}{\mathfrak{E}}
\newcommand{\CE}{\mathcal{E}}
\newcommand{\LL}{\mathcal{L}}
\newcommand{\N}{{\mathbb{Z}}_+}
\newcommand{\HB}{\mathbb{H}}
\newcommand{\PP}{\mathbb{P}}
\newcommand{\R}{\mathbb{R}}
\newcommand{\BF}{\mathbb{F}}
\numberwithin{equation}{section}
\newcommand{\1}{\boldsymbol{1}}
\newcommand{\bed}{\begin{displaymath}}
\newcommand{\eed}{\end{displaymath}}
\newcommand{\bea}{\bed\begin{array}{rl}}
\newcommand{\eea}{\end{array}\eed}
\newcommand{\ad}{&\!\!\!\disp}
\newcommand{\barray}{\begin{array}{ll}}
\newcommand{\earray}{\end{array}}
\def\disp{\displaystyle}
\def\bar{\overline}
\def\hat{\widehat}
\def\a.s{\text{\;a.s.\;}}
\def\bnu{\boldsymbol{\nu}}
\begin{document}
\title{Recurrence and Ergodicity of Switching Diffusions with Past-Dependent Switching Having A Countable State Space\thanks{This
research was supported in part by the National Science Foundation under grant DMS-1207667.}}
\author{Dang H. Nguyen\thanks{Department of Mathematics, Wayne State University, Detroit, MI
48202,
dangnh.maths@gmail.com.} \and
George Yin\thanks{Department of Mathematics, Wayne State University, Detroit, MI
48202,
gyin@math.wayne.edu.}}
\maketitle

\begin{abstract} This work focuses on recurrence and ergodicity of
switching diffusions consisting of continuous and discrete components, in which the discrete
component takes values in a countably infinite set and the rates of switching
at current time
depend on the value of the continuous component over an interval including certain past history.
Sufficient conditions for recurrence and ergodicity are given.
Moreover, the relationship between systems of partial differential equations and recurrence
when the switching is past-independent is established
under suitable conditions.

\bigskip
\noindent {\bf Keywords.} Switching diffusion, past-dependent switching, recurrence, ergodicity.

\bigskip
\noindent{\bf Subject Classification.} 60H10, 60J60, 60J75, 37A50.

\end{abstract}

\newpage

\section{Introduction}\label{sec:int}
Emerging and existing applications in  wireless communications, queueing networks, biological models,
ecological systems, financial engineering, and social networks demand the
mathematical modeling, analysis, and computation
of hybrid systems in which continuous dynamics and discrete events coexist.
Switching diffusions are one of such hybrid models.
A switching diffusion is a two-component process $(X(t),\alpha(t))$, a continuous component and a discrete component taking values in a set
consisting of isolated points.
When the discrete component takes a value $\alpha$ (i.e., $\alpha(t)=\alpha$),
the continuous component $X(t)$ evolves according to the diffusion process whose drift and diffusion coefficients depend on $\alpha$.
Such processes
 have received growing attention recently because of their ability to a wide range of applications; see \cite{KZY,XM,SX,ZY}
and the references therein.

In the comprehensive treatment of hybrid switching diffusions in \cite{MY}, it was assumed that
  $\alpha(t)$ is a continuous-time and homogeneous Markov chain  independent of the Brownian motion and that
 the generator of the Markov chain is a constant matrix.
 For broader impact on applications, considering the two components jointly,
  the work \cite{YZ} extended the study to the
 Markov process  $(X(t),\alpha(t))$
by allowing the generator of $\alpha(t)$ to depend on the current state $X(t)$.
Until very recently, most of
the works treat
 $\alpha(t)$ as a process taking values in a finite set.
Even when $\alpha(t)$ is allowed to take values in
 a countable state space, almost all works required
 the systems being memoryless. That is, the switching depends on the continuous state, with the dependence on
  the current continuous state only, no delays are involved; see, for example, \cite{MY,SJ,SX,YZ} and references therein.
To be able to treat more complex models and to broaden the applicability,
we have undertaken the task of
 investigating the dynamics of $(X(t),\alpha(t))$ in which $\alpha(t)$ has a countable state space
and its switching intensities depend on the history of the continuous component  $X(t)$.
As a first attempt, this type of switching diffusion was considered in \cite{DY}, which was
 motivated by queueing and control systems applications.
In particular,  the evolution of two interacting species was considered in the aforementioned reference. One of the species is micro
described by a logistic differential equation perturbed by a white noise, and
the other is macro.
  Let $X(t)$ be the density of the micro species and $\alpha(t)$ the population of the macro species.
 The reproduction process of $\alpha(t)$ is
 non-instantaneous, resulting in past-dependent switching.
In \cite{DY}, we gave precise formulation of the process  $(X(t),\alpha(t))$ and established
the existence and uniqueness of solutions together with such
 properties as Markov-Feller property
 and Feller property of  function-valued stochastic processes associated with our processes
under suitable conditions.

Many real-world systems are in operation for a long period
of time. Similar to their diffusion counter part, longtime behavior of switching diffusion systems is very important.
When the switching is independent of past, a number of results
have been obtained in \cite{BSY, SJ, SX, SX2, ZY}. Nevertheless, when past-dependent switching is considered, not much of
 the desired asymptotic properties are known to the best of our knowledge.
Motivated by the practical needs,
this paper studies recurrence and ergodicity of
switching diffusion processes with past-dependent switching having a countable state space.
Such systems are more difficult to handle. To begin, the pair $(X(t),\alpha(t))$ is no longer a Markov process because of past-dependence of the
switching process.
Most of the arguments based on Markov property cannot be used for the aforementioned pair, so different approaches have to be taken.
The next idea is to treat the process $(X_t,\alpha(t))$, where $X_t$ is the so-called segment process associated with $X(t)$. For such processes, although we have Markov properties,
 the systems that we face become infinite dimensional.
As will be seen in the subsequent sections, the problems require much more attention and careful consideration.

The rest of the paper is organized as follows. The formulation of switching diffusions with past-dependent switching and
countably many possible switching locations is given in Section \ref{sec:for}.
Also some relevant results, including
the functional It\^o formula, are recalled briefly. These results
 play a crucial role.
Section \ref{sec:3} provides certain sufficient conditions for recurrence, positive recurrence, and ergodicity of
the related Markov process associated with our switching diffusions.
In Section \ref{sec:4}, we characterize the recurrence of switching diffusion that are past independent.
Furthermore,
the relationship between recurrence and the associated systems of partial differential equations
is established.
Finally, we provide the proofs of some technical results in an appendix.

\section{Formulation}\label{sec:for}
Denote by $\C([a,b],\R^n)$ the set of  $\R^n$-valued continuous functions defined on $[a, b]$. Let $r$ be a fixed positive number.
In what follows,
we mainly work with $\C([-r,0],\R^n)$, and simply denote $\C:=\C([-r,0],\R^n)$.
For $\phi\in\C$, we use the norm $\|\phi\|=\sup\{|\phi(t)|: t\in[-r,0]\}$.
For $t\geq0$, we denote  by $y_t$
the
so-called segment function (or memory segment function) $y_t =\{ y(t+s): -r \le s \le 0\}$.
For $x\in\R^n$, denote by $|x|$ the Euclidean norm of $x$.
Let $\fB(\C)$ and $\fB(\C\times\N)$ be the $\sigma$-algebras
on $\C$ and $\C\times\N$ respectively.
Let $(\Omega,\F,\{\F_t\}_{t\geq0},\PP)$ be a complete filtered probability space with the filtration $\{\F_t\}_{t\geq 0}$ satisfying the usual condition,  i.e., it is increasing and right continuous while $\F_0$ contains all $\PP$-null sets.
Let $W(t)$ be an $\F_t$-adapted and $\R^d$-valued Brownian motion.
Suppose $b(\cdot,\cdot): \R^n\times\N\to\R^n$ and $\sigma(\cdot,\cdot): \R^n\times\N\to\R^{n\times d}$.
Consider the two-component process $(X(t),\alpha(t))$ where
 $\alpha(t)$ is a pure jump process taking value in  $\N= {\mathbb N} \setminus \{0\}=\{1,2,\dots\}$, the set of positive integers, and $X(t)$ satisfies
\begin{equation}\label{eq:sde} dX(t)=b(X(t), \alpha(t))dt+\sigma(X(t),\alpha(t))dW(t).\end{equation}
We assume that the switching intensity of $\alpha(t)$ depends on the trajectory of $X(t)$ in the interval $[t-r,t]$,
that is, there are functions $q_{ij}(\cdot):\C\to\R$ for $i,j\in\N$ satisfying
$q_{ij}(\phi)\geq0\,\forall i\ne j$ and $\sum_{j=1,j\ne i}^\infty q_{ij}(\phi)=q_{i}(\phi)$ for all $\phi\in\C$,which means that $\alpha(t)$ is conservative. If $q_i(\phi)$ is uniformly bounded in $(\phi,i)\in\C\times\N$, and
$q_{i}(\cdot)$ and $q_{ij}(\cdot)$ are continuous, then we have the
following interpretation
\begin{equation}\label{eq:tran}\begin{array}{ll}
&\disp \PP\{\alpha(t+\Delta)=j|\alpha(t)=i, X_s,\alpha(s), s\leq t\}=q_{ij}(X_t)\Delta+o(\Delta) \text{ if } i\ne j \
\hbox{ and }\\
&\disp \PP\{\alpha(t+\Delta)=i|\alpha(t)=i, X_s,\alpha(s), s\leq t\}=1-q_{i}(X_t)\Delta+o(\Delta).\end{array}\end{equation}
[Note that in \eqref{eq:tran}, in contrast to \cite{YZ},
 in lieu of $X(t)$, $X_t$ is used.]
For more general $q_i(\cdot)$ and $q_{ij}(\cdot)$,
the process $\alpha(t)$ can be defined rigorously as the solution to a stochastic differential equation with respect to a Poisson random measure.
For each function $\phi: [-r,0]\to\R^n$, and $i\in\N$,  let $\Delta_{ij}(\phi), j\ne i$  be the consecutive left-closed, right-open intervals of the real line, each having length $q_{ij}(\phi)$.
That is
\bea \ad \Delta_{i1}(\phi)=[0,q_{i1}(\phi)),\\
\ad \Delta_{ij}(\phi)=\Big[\sum_{k=1,k\ne i}^{j-1}q_{ik}(\phi),\sum_{k=1,k\ne i}^{j}q_{ik}(\phi)\Big), \ j>1 \ \hbox{ and }\ j\ne i.\eea
Define $h:\C\times\N\times\R\mapsto\R$ by
$h(\phi, i, z)=\sum_{j=1, j\ne i}^\infty(j-i)\1_{\{z\in\Delta_{ij}(\phi)\}}.$
The process $\alpha(t)$ can be defined as the solution to
$$d\alpha(t)=\int_{\R}h(X_t,\alpha(t-), z)\p(dt, dz),$$	
where $a(t-)=\lim\limits_{s\to t^-}\alpha(s)$ and $\p(dt, dz)$ is a Poisson random measure with intensity $dt\times\m(dz)$ and $\m$ is the Lebesgue measure on $\R$ such that
$\p(dt, dz)$ is
independent of the Brownian motion $W(\cdot)$.
The pair $(X(t),\alpha(t))$ is therefore a solution to
\begin{equation}\label{e2.3}
\begin{cases}
dX(t)=b(X(t), \alpha(t))dt+\sigma(X(t),\alpha(t))dW(t) \\
d\alpha(t)=\disp\int_{\R}h(X_t,\alpha(t-), z)\p(dt, dz).\end{cases}
\end{equation}
With initial data $(\xi,i_0)$ being
a $\C\times\N$-valued and $\F_0$-measurable random variable,
a strong solution to  \eqref{e2.3} on $[0,T]$,
is an $\F_t$-adapted process $(X(t),\alpha(t))$ such that
\begin{itemize}
  \item $X(t)$ is continuous and $\alpha(t)$ is cadlag (right continuous with left limits) with probability 1 (w.p.1).
  \item $X(t)=\xi(t)$ for $t\in[-r,0]$ and  $\alpha(0)=i_0$
  \item $(X(t),\alpha(t))$ satisfies \eqref{e2.3} for all $t\in[0,T]$ w.p.1.
\end{itemize}
To ensure the existence and uniqueness of solutions,
we assume  that one of the following two assumptions holds throught this paper (see \cite{DY}).

\begin{asp}\label{asp2.3}{\rm
The following conditions hold.
\begin{itemize}
  \item[{(i)}] For each $H>0$, $i\in\N$, there is a positive constant $L_{H,i}$ such that
$$|b(x,i)-b(y,i)|+|\sigma(x,i)-\sigma(y,i)|\leq L_{H,i}|x-y|, \ \forall |x|, |y|\leq H, i\in\N.$$
\item[{(ii)}] For each $i\in\N$, there exists a  twice continuously differentiable function $V_i(x)$ and a constant $C_i>0$ such that
$$\lim\limits_{R\to\infty}\Big(\inf\{V_i(x):|x|\geq R\}\Big)=\infty\quad\text{ and }\quad\LL_iV_i(x)\leq C_i(1+V_i(x))\,\forall\,x\in\R^n,$$
where $\LL_i$ is the generator of the diffusion at the fixed state $i$, that is,
\begin{equation}\label{e.L_i}
\LL_i V(x)=\nabla V(x)b(x,i)+\dfrac12\trace\Big(\nabla^2 V(x)A(x,i)\Big)
\end{equation}
for $V(\cdot)\in C^2(\R^n)$, where $A(x,i)=\sigma(x,i) \sigma^{\sf T}(x,i)$. 
\item[{(iii)}] $q_{ij}(\phi)$ and $q_i(\phi)$ are continuous in $\phi\in\C$ for each $i,j\in\N, i\ne j$. Moreover,
      $$M:=\sup_{\phi\in\C, i\in\N}\{q_{i}(\phi)\}<\infty.$$
\end{itemize} }
\end{asp}

\begin{asp}\label{asp2.4} {\rm
The following conditions hold.
\begin{itemize}
  \item[{(i)}] For each $H>0$, $i\in\N$, there is a positive constant $L_{H,i}$ such that
$$|b(x,i)-b(y,i)|+|\sigma(x,i)-\sigma(y,i)|\leq L_{H,i}|x-y|, \, \ \forall |x|, |y|\leq H, \ i\in\N.$$
\item[{(ii)}] There exists a  twice continuously differentiable function $V(x)$ and a constant $C>0$ independent of $i\in\N$ such that
$$\lim\limits_{R\to\infty}\Big(\inf\{V(x):|x|\geq R\}\Big)=\infty\quad\text{ and }\quad\LL_iV(x)\leq C(1+V(x)), \,\forall\,x\in\R^n, \ i\in\N.$$
\item[{(iii)}]  $q_{ij}(\phi)$ and $q_i(\phi)$ are continuous in $\phi\in\C$ for each $i,j\in\N, i\ne j$. Moreover, for any $H>0$,
      $$M_H:=\sup_{\phi\in\C, \|\phi\|\leq H, i\in\N}\{q_{i}(\phi)\}<\infty.$$
\end{itemize}}
\end{asp}

It is proved in \cite{DY} that under either of the above two assumptions, the process
$(X_t,\alpha(t))$
satisfying \eqref{e2.3} is a Markov-Feller process.
Now we state the functional It\^o formula for our process (see \cite{CF} for more details).
Let $\DD$ be the space of cadlag functions $f:[-r,0]\mapsto\R^n$.
For $\phi\in\DD$, we define horizontal and vertical perturbations for $h\ge 0$ and $y\in \R^n$ as
$$
\phi_h(s)=
\begin{cases}\phi(s+h)\, \text{ if }\, s\in[-r,-h],\\
 \phi(0) \, \text{ if }\,s\in[-h,-0],
\end{cases}
$$
and
$$
\phi^y(s)=
\begin{cases}\phi(s)\, \text{ if }\, s\in[-r,0),\\
 \phi(0)+y,
\end{cases}
$$
respectively.
Let $V:\DD\times\N\mapsto\R$.
The horizontal derivative at $(\phi,i)$ and vertical partial derivative of $V$ are defined as
\begin{equation}\label{e.dt}
V_t(\phi,i)=\lim\limits_{h\to0} \dfrac{V(\phi_h,i)-V(\phi)}h
\end{equation}
and
\begin{equation}\label{e.dx}
\partial_i V(\phi,i)=\lim\limits_{h\to0} \dfrac{V(\phi^{he_i},i)-V(\phi)}h
\end{equation}
if these limits exist.
In \eqref{e.dx}, $e_i$ is the standard unit vector in $\R^n$ whose $i$-th component is $1$ and other components are $0.$
Let $\BF$ be the family of function $V(\cdot,\cdot):\DD\times\N\mapsto\R$ satisfying that
\begin{itemize}
  \item $V$ is continuous, that is, for any $\eps>0$, $(\phi,i)\in\DD\times\N$, there is a $\delta>0$ such that
  $|V(\phi,i)-V(\phi',i)|<\eps$ as long as $\|\phi-\phi'\|<\delta$.
\item The derivatives
$V_t$, $V_x=(\partial_k V)$, and $V_{xx}=(\partial_{kl} V)$ exist and are continuous.
\item $V$, $V_t$, $V_x=(\partial_k V)$ and $V_{xx}=(\partial_{kl} V)$ are bounded in each $B_R:=\{(\phi,i): \|\phi\|\leq R, i\leq R\}$, $R>0$.
\end{itemize}

\begin{rmk}\label{rmk2.1} {\rm
Recently, a functional It\^o formula was developed in \cite{Dup09}, which encouraged subsequent development (for example, \cite{CF,PH}). We briefly recall the main idea in what follows.
Consider functions of the form
 $$V(\phi,i)=f_1(\phi(0),i)+\int_{-r}^0g(t,i)f_2(\phi(t),i)dt,$$
where $f_2(\cdot,\cdot):\R^n\times\N\mapsto\R$ is a continuous function and $f_1(\cdot,\cdot):\R^n\times\N\mapsto\R$ is a function that is twice continuously differentiable in the first variable
and $g(\cdot,\cdot):\R_+\times\N\mapsto\R$ be a continuously differentiable function in the first variable.
Then at $(\phi,i)\in\C\times\N$ we have (see \cite{PH} for the detailed computations)
$$V_t(\phi,i)=g(0,i)f_2(\phi(0),i)-g(-r,i)f_2(\phi(-r),i)-\int_{-r}^0 f_2(\phi(t),i)dg(t,i),$$
$$\partial_k V(\phi,i)=\dfrac{\partial f_1}{\partial x_k}(\phi(0),i),\qquad \partial_{kl} V(\phi,i)=\dfrac{\partial^2 f_1}{\partial x_k\partial x_l}(\phi(0),i).$$
}\end{rmk}

Let $V(\cdot,\cdot)\in\BF$, we define the operator
\begin{equation}\label{e:LV}
\begin{aligned}
\LL V(\phi, i)=&V_t(\phi,i)+V_x(\phi,i)b(\phi(0),i)+\dfrac12\trace\Big(V_{xx}(\phi,i)A(\phi(0),i)\Big)\\
&+\sum_{j=1,j\ne i}^\infty q_{ij}(\phi)\big[V(\phi,j)-V(\phi,i\big]\\
=&V_t(\phi,i)+
\sum_{k=1}^nb_k(\phi(0),i)V_k(\phi,i)
+\dfrac12
\sum_{k,l=1}^na_{kl}(\phi(0),i)V_{kl}(\phi,i)\\
&+\sum_{j=1,j\ne i}^\infty q_{ij}(\phi)\big[V(\phi,j)-V(\phi,i)\big],
\end{aligned}
\end{equation}
for any bounded stopping time $\tau_1\leq\tau_2$,
we have the functional It\^o formula:
\begin{equation}\label{f.Ito}
\E V(X_{\tau_2},\alpha(\tau_2))=\E V(X_{\tau_1},\alpha(\tau_1))+\E\int_{\tau_1}^{\tau_2}\LL V(X_s,\alpha(s))ds
\end{equation}
if the expectations involved exist. Equation
\eqref{f.Ito} is obtained by applying the functional It\^o formula for general semimartingales
given in \cite{CF1,CF}
specialized to our processes.

\section{Recurrence and Ergodicity}\label{sec:3}
First, we need some conditions for irreducibility of the process $\{(X_t,\alpha(t)): t\geq0\}$.
\begin{enumerate}[label=(\rm H{\arabic*})]
    \item \label{h1}
\begin{enumerate}
\item     For any $i\in\N,$ $A(x,i)$
is elliptic uniformly on each compact set, that is, for any $R>0$, there is a $\theta_{R,i}>0$ such that
\begin{equation}\label{ellip}
y^\top A(x,i)
y\geq \theta_{R,i}
|y|^2
\ \ \forall |x|\leq R, \
y\in\R^d.
\end{equation}
\item There is an $i^*$ satisfying that for any $i\in\N$, there exist $i_1,\dots,i_k\in\N$ and $\phi_1,\dots,\phi_{k+1}\in\C$ such that
$q_{ii_1}(\phi_1)>0$, $q_{i_l,i_{l+1}}(\phi_{l+1})>0,l=1,\dots, k-1$, and $q_{i_k,i^*}(\phi(k+1))>0$.
\end{enumerate}
\item \label{h2}
 There exists an $i^*\in\N$ such that
\begin{enumerate}
\item
$A(x, i^*)$
is elliptic uniformly on each compact set;
\item for any $(\phi,i)\in\C\times\N$, there exist positive integers $i=i_1,\dots,i_k=i^*$ satisfying 
$q_{i_l,i_{l+1}}(\phi)>0,l=1,\dots, k-1$.
\end{enumerate}

\end{enumerate}

Let
$(X^{\phi,i},\alpha^{\phi,i}(t))$ be the solution to \eqref{e2.3}
with initial data $(\phi,i)\in\C\times\N$.
To simplify the notation, we denote by $\PP_{\phi,i}$ the probability measure conditioned on the initial data $(\phi,i)$,
that is, for any $t>0$,
$$\PP_{\phi,i}\{(X_t,\alpha(t))\in\cdot\}=\PP\{(X_t^{\phi,i},\alpha^{\phi,i}(t))\in\cdot\},$$
and $\E_{\phi,i}$  the expectation associated with $\PP_{\phi,i}$.
To proceed, we state some auxiliary lemmas.

\begin{lm}\label{lm2.2}
Let $\phi\in\C$ and $q_{ij}(\phi)>0$. For any $\eps>0$, there is a $\delta>0$ such that
$$\inf_{\psi\in\C: \|\psi-\phi\|<\delta}\PP_{\psi,i}\{\|X_\delta-\phi\|<\eps, \alpha(\delta)=j\}>0.$$
\end{lm}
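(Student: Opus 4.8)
The plan is to establish the desired positive probability by separately controlling the diffusive motion of $X$ and the jump behavior of $\alpha$ over the small time window $[0,\delta]$, using the continuity hypotheses in Assumptions~\ref{asp2.3}--\ref{asp2.4} together with \ref{h1}. The key observation is that $q_{ij}(\phi)>0$ together with the continuity of $q_{ij}(\cdot)$ gives a neighborhood of $\phi$ in $\C$ on which $q_{ij}$ stays bounded below by some $c>0$. Within this neighborhood, the event ``$\alpha$ jumps from $i$ to $j$'' has a uniform positive rate, so the probability of such a jump occurring during $[0,\delta]$ while no \emph{other} jump happens is bounded below by a quantity like $c\delta\,e^{-M\delta}$ (using the uniform bound $M$, or $M_H$, on total rates), provided the continuous trajectory stays inside the neighborhood throughout.

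The first step would be to handle the diffusive part. I would fix $\eps>0$ and seek $\delta$ small so that, starting from any $\psi$ with $\|\psi-\phi\|<\delta$, the segment $X_\delta$ stays $\eps$-close to $\phi$ with positive probability. The natural device is a support theorem for the diffusion with fixed mode $i$: since $A(x,i)$ is uniformly elliptic on compacts by \ref{h1}, the diffusion $X^{\psi,i}$ can be steered (with positive probability) to follow any prescribed continuous path, in particular a path that keeps $X_\delta$ within $\eps$ of $\phi$. Concretely, because $\phi$ is continuous on $[-r,0]$, for small $\delta$ the segment $X_\delta$ only differs from a shift of $\phi$ on the short new piece $[0,\delta]$; controlling $\sup_{s\in[0,\delta]}|X(s)-\phi(0)|<\eps/2$ and using $\|\psi-\phi\|<\delta$ small enough to control the old piece suffices. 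The positivity of the small-ball probability for the diffusion to remain in a tube follows from uniform ellipticity plus the local Lipschitz and growth conditions, and is uniform over $\psi$ in the $\delta$-ball by a standard compactness/continuity argument.

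The second step is to combine the two estimates on the \emph{same} probability space. I would work on the event that (a) exactly one switching jump occurs in $[0,\delta]$, and it is the $i\to j$ jump, and (b) the continuous component satisfies $\|X_\delta-\phi\|<\eps$. Because the jump intensities depend on $X_t$, the two events are not independent; the cleanest way to decouple them is to condition on the Brownian path (equivalently, on the realization of $X$ before any jump), observe that prior to the first jump $X$ evolves as the pure mode-$i$ diffusion, and then estimate the conditional probability of the correct single jump using the lower bound $q_{ij}(X_t)\ge c$ valid as long as $X_t$ stays in the neighborhood. This yields a product lower bound of the form $(\text{diffusion tube probability})\times c\,\delta\,e^{-M\delta}>0$.

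\emph{The hard part} will be the joint control in the last step: the switching rate $q_{ij}(X_t)$ is evaluated along the random segment process, so I must ensure the lower bound $q_{ij}\ge c$ holds \emph{throughout} $[0,\delta]$, not merely at the initial time. This forces me to intersect the ``good jump'' event with the ``$X$ stays in a $\C$-neighborhood of $\phi$'' event and to verify that the latter event (which I already control from the support theorem) keeps $X_t$ inside the region where continuity of $q_{ij}$ guarantees the lower bound. Making the estimate uniform over the initial segment $\psi$ in the shrinking ball, and choosing $\delta$ simultaneously small enough for both the tube estimate and the single-jump approximation, is the delicate bookkeeping; once the neighborhoods are nested correctly, the infimum over $\psi$ inherits positivity from the uniform constants.
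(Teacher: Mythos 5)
Your overall strategy---shrink to a neighborhood of $\phi$ on which $q_{ij}\ge c>0$, force the continuous component to stay in a small tube on $[0,\delta]$, and then price the event ``exactly one jump, namely $i\to j$''---is essentially the skeleton of the paper's proof, which uses the explicit first-jump law from \cite[Lemma 4.2]{DY} together with the strong Markov property at the jump time. Two points, however, need repair. First, you invoke uniform ellipticity from \ref{h1} and a support theorem to get the tube estimate. Lemma \ref{lm2.2} is stated, and used, without any ellipticity hypothesis: in the proof of Lemma \ref{lm2.1} under \ref{h2} it is applied at modes $i_l$ for which $A(\cdot,i_l)$ need not be elliptic, so a proof that genuinely needs ellipticity of $A(\cdot,i)$ would not serve the paper's purposes. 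Fortunately ellipticity is not needed: the tube you want is centered at the starting point $\psi(0)\approx\phi(0)$, so the elementary small-time estimate $\PP\left\{\sup_{t\le\delta}|Y(t)-\psi(0)|\le\eps/5\right\}\ge 1/2$, valid under Assumption \ref{asp2.3} or \ref{asp2.4} alone and uniformly in $\psi$ near $\phi$, suffices; this is exactly what the paper uses in \eqref{e3-lm2.2}--\eqref{e4-lm2.2}. You should drop the support theorem and \ref{h1}.

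Second, your decoupling is incomplete after the jump. Conditioning on the Brownian path does not determine $X$ on all of $[0,\delta]$: after the first jump time the coefficients switch to mode $j$, so the trajectory depends on the (random) jump time, and the event $\{\|X_\delta-\phi\|<\eps\}$ involves the post-jump piece. The paper's resolution is two-staged: (i) use the explicit conditional law of the first jump given the mode-$i$ trajectory to bound from below the probability that the jump occurs before $\delta$, lands at $j$, and leaves the segment within $3\eps/5$ of $\phi$; then (ii) apply the strong Markov property at the jump time and a \emph{second} tube-plus-no-further-jump estimate for the mode-$j$ diffusion, uniform over initial segments within $3\eps/5$ of $\phi$, with the neighborhoods nested so that $3\eps/5$ grows to at most $\eps$. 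Your write-up produces only the mode-$i$ tube estimate, so the product lower bound does not close without the mode-$j$ analogue. You do correctly anticipate the remaining subtlety that $\|X_\delta-\phi\|<\eps$ compares the shifted old data $\psi(\cdot+\delta)$ with $\phi$, which requires the modulus of continuity of $\phi$ on the whole segment.
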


\begin{lm}\label{lm2.6}
For any $i>0$, $R>0$ and $\eps>0$,
there is a compact set $\A\in\C$ such that
\begin{equation}\label{e0-lm2.6}
\inf_{\|\phi\|\leq R}\PP_{\phi,i}\{X_r\in\A, \alpha(r)=i\}>0.
\end{equation}
Moreover, if \eqref{ellip} holds for $i$, then for any $k>0$, there is  a $T=T(k,i,R)>0$ such that
\begin{equation}\label{e1-lm2.6}
\inf_{\|\phi\|\leq R}\PP_{\phi,i}\{\|X_t\|>k\,\text{ for some }\,t\in[0,T]\}>0.
\end{equation}
\end{lm}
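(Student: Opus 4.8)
The plan is to reduce both assertions to classical facts about the single diffusion obtained by freezing the discrete component at $i$, exploiting that until the first switch the pair $(X,\alpha)$ coincides with that diffusion. Let $\wdt X$ denote the solution of $d\wdt X(t)=b(\wdt X(t),i)dt+\sigma(\wdt X(t),i)dW(t)$ with $\wdt X_0=\phi$, and let $\tau_1$ be the first jump time of $\alpha$. Since the driving Poisson measure $\p$ is independent of $W$, conditioning on $W$ gives $\PP_{\phi,i}\{\tau_1>s\mid W\}=\exp\!\big(-\int_0^s q_i(\wdt X_u)\,du\big)$, and on $\{\tau_1>s\}$ one has $\alpha(\cdot)\equiv i$ and $X\equiv\wdt X$ on $[0,s]$. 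The switching rate is controlled by the uniform bound $M$ from Assumption \ref{asp2.3}(iii), or by the local bound $M_H$ from Assumption \ref{asp2.4}(iii) whenever the path stays in $\{|x|\le H\}$; this is precisely what converts a statement about $\wdt X$ into one about $(X,\alpha)$ with the discrete part held at $i$.

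For \eqref{e0-lm2.6} I would show that the laws of the segment $\wdt X_r\in\C$, as $\phi$ ranges over $\{\|\phi\|\le R\}$, are tight, so that a compact $\A\subseteq\C$ (characterized via Arzel\`a--Ascoli) satisfies $\inf_{\|\phi\|\le R}\PP\{\wdt X_r\in\A\}\ge\tfrac12$. Tightness has two ingredients. Uniform boundedness comes from the Lyapunov inequality $\LL_iV_i\le C_i(1+V_i)$ (Assumption \ref{asp2.3}(ii), or the common $V$ in Assumption \ref{asp2.4}(ii)): Dynkin's formula and Gronwall's inequality give $\sup_{0\le t\le r}\E V_i(\wdt X(t))\le(\sup_{|x|\le R}V_i(x)+1)e^{C_ir}$, and the coercivity $\inf_{|x|\ge N}V_i\to\infty$ forces $\PP\{\sup_{0\le u\le r}|\wdt X(u)|\ge N\}\to0$ uniformly in $\phi$. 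Equicontinuity comes from a Kolmogorov-type bound $\E|\wdt X(t\wedge\rho_N)-\wdt X(s\wedge\rho_N)|^p\le K_N|t-s|^{p/2}$ for the process stopped at $\rho_N=\inf\{t:|\wdt X(t)|\ge N\}$, where on $\{|x|\le N\}$ the coefficients are bounded by the local Lipschitz hypothesis; this bounds the H\"older seminorm $[\,\cdot\,]_\gamma=\sup_{s\ne s'}|\cdot(s)-\cdot(s')|/|s-s'|^\gamma$ in $L^p$ uniformly in $\phi$. Choosing $N$ and a H\"older radius $L$ large, the set $\A=\{\psi\in\C:\|\psi\|\le N,\ [\psi]_\gamma\le L\}$ is compact and carries probability at least $\tfrac12$. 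Restricting to $\{\tau_1>r\}$ and invoking the no-jump lower bound $e^{-Mr}$ (respectively $e^{-M_{\max(R,N)}r}$) then yields $\inf_{\|\phi\|\le R}\PP_{\phi,i}\{X_r\in\A,\ \alpha(r)=i\}\ge\tfrac12 e^{-Mr}>0$.

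For \eqref{e1-lm2.6} I would use ellipticity to bound the mean exit time of $\wdt X$ from $\{|x|<k+1\}$ uniformly in the start. With $\rho=\inf\{t:|\wdt X(t)|\ge k+1\}$, the classical construction $u(x)=c^{-1}\big(e^{\lambda(k+1)}-e^{\lambda x_1}\big)$ with $\lambda$ large gives, by \eqref{ellip} for $i$ and the boundedness of $b(\cdot,i)$ on the ball, a nonnegative function with $\LL_i u\le-1$ there; Dynkin's formula then yields $\sup_{|x|\le R}\E_x\rho\le\sup_{|x|\le k+1}u(x)=:\bar T<\infty$. Taking $T=2\bar T$, Markov's inequality gives $\PP\{\rho<T\}\ge\tfrac12$ uniformly in $\phi$. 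On $[0,\rho]$ the path stays in $\{|x|\le k+1\}$, so the switching rate is at most $M$ (or $M_{\max(R,k+1)}=:M'$), whence the no-jump survival probability up to $\rho$ is at least $e^{-M'T}$; on $\{\rho<T\}\cap\{\tau_1>\rho\}$ we have $\alpha\equiv i$ and $\sup_{0\le v\le\rho}|X(v)|=k+1>k$, so $\|X_\rho\|>k$ with $\rho\le T$. Combining, $\inf_{\|\phi\|\le R}\PP_{\phi,i}\{\|X_t\|>k\text{ for some }t\in[0,T]\}\ge\tfrac12 e^{-M'T}>0$.

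The main obstacle is the uniform path regularity required for \eqref{e0-lm2.6}: because $b,\sigma$ are only locally Lipschitz, the Kolmogorov estimate is available only for the process stopped on leaving $\{|x|\le N\}$, so the boundedness and equicontinuity controls must be dovetailed through the stopping time $\rho_N$ and verified to hold uniformly over the initial segments $\phi$. By contrast, the reduction to the frozen diffusion and the mean-exit-time estimate underlying \eqref{e1-lm2.6} are comparatively standard once ellipticity is in force.
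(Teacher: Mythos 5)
Your proposal is correct and follows essentially the same route as the paper: both reduce to the diffusion with the discrete component frozen at $i$ via the no-jump survival factor $\exp\left(-\int_0^\cdot q_i(Y_s)\,ds\right)$, prove \eqref{e0-lm2.6} by exhibiting a compact set of H\"older-bounded segments carrying probability at least $\tfrac12$ uniformly in $\|\phi\|\le R$ (the paper cites the Kolmogorov--Centsov theorem and \cite[Lemma 4.5]{DY} where you rebuild the tightness from the Lyapunov bound and a stopped moment estimate), and prove \eqref{e1-lm2.6} from a uniform mean exit time bound plus Markov's inequality (the paper cites \cite{RK,ZY} where you construct the exponential barrier explicitly). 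The differences are only in which classical ingredients are cited versus rederived.
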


\begin{lm}\label{lm2.4}
Assume that either  {\rm \ref{h1}} or {\rm\ref{h2}} is satisfied. There is a nontrivial measure $\bnu(\cdot)$ on $\fB(\C)$
such that
$\bnu(\D)>0$ if $\D$ is a nonempty open subset of $\C$
and that
for any $R>0$, $T>r$, there is a $d_{R,T}>0$ satisfying
\begin{equation}\label{e0-lm2.4}
\PP_{\phi,i^*}\{X_{T}\in\B\text{ and } \alpha(T)=i^*\}\geq d_{R,T}\bnu(\B),\,\B\in\fB(\C)\,\text{ given that }\,\|\phi\|\leq R.
\end{equation}
\end{lm}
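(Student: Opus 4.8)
The plan is to reduce the assertion to a minorization for the frozen diffusion at the distinguished state $i^*$ and to build $\bnu$ from that diffusion's kernel. Since the event in question begins and ends at $i^*$, the first step is to discard switching: on the event that $\alpha$ performs no jump on $[0,T]$, the pair solving \eqref{e2.3} reduces to $\alpha(\cdot)\equiv i^*$ together with the solution $Y(\cdot)$ of $dY=b(Y,i^*)\,dt+\sigma(Y,i^*)\,dW$ started from the point $Y(0)=\phi(0)$. Because the jumps of $\alpha$ are generated by the Poisson measure $\p$, which is independent of $W$, the no-jump probability given the trajectory equals $\exp\!\big(-\int_0^T q_{i^*}(Y_s)\,ds\big)$, so that
\[
\PP_{\phi,i^*}\{X_T\in\B,\ \alpha(T)=i^*\}\ \ge\ \E^{Y}_{\phi(0)}\Big[\1_{\{Y_T\in\B\}}\exp\Big(-\int_0^T q_{i^*}(Y_s)\,ds\Big)\Big].
\]
Under \ref{h1} the rate obeys $q_{i^*}\le M$, so the exponential is bounded below by $e^{-MT}$ and the right-hand side dominates $e^{-MT}\,\PP^Y_{\phi(0)}\{Y_T\in\B\}$. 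Under \ref{h2} the bound is only local, and I would instead intersect with $\{\sup_{0\le t\le T}\|Y_t\|\le H\}$ for $H$ large; this event has probability bounded below uniformly over $|\phi(0)|\le R$ by the non-explosion afforded by Assumption \ref{asp2.4}(ii), and on it the rate is at most $M_H$. In either case the claim is reduced to a minorization of the law of the segment $Y_T$ of a fixed nondegenerate diffusion whose only initial dependence is through $\phi(0)\in\{|x|\le R\}$.

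For the diffusive part I would condition at time $T-r$, which is legitimate since $T>r$. Let $p_{T-r}(x,\cdot)$ be the time-$(T-r)$ law of $Y$ started at $x$, and let $\mu_y$ denote the law on $\C$ of the length-$r$ path $\{Y(u):u\in[T-r,T]\}$ issued from the single point $Y(T-r)=y$. By the Markov property,
\[
\PP^Y_{\phi(0)}\{Y_T\in\B\}=\int_{\R^n}\mu_y(\B)\,p_{T-r}(\phi(0),dy).
\]
Uniform ellipticity of $A(\cdot,i^*)$ on compacts (part (a) of \ref{h1}/\ref{h2}) together with the local Lipschitz bounds of Assumption \ref{asp2.3}(i)/\ref{asp2.4}(i) yields, after localizing to a large ball, a strictly positive jointly continuous transition density $p_{T-r}(x,y)$ obeying a Gaussian-type lower bound; combined with compactness of $\{|x|\le R\}$ this gives a continuous envelope $\ell_{R,T}(y):=\inf_{|x|\le R}p_{T-r}(x,y)>0$. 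I would then fix once and for all a full-support probability density $\eta$ on $\R^n$ whose tails are lighter than those of $\ell_{R,T}$ (for the present diffusion a choice of the form $\eta(dy)\propto e^{-|y|^{K}}dy$ with $K$ large suffices) and set $\bnu(\B):=\int_{\R^n}\mu_y(\B)\,\eta(dy)$. Since for each fixed $T>r$ one has $\ell_{R,T}(y)\ge d_{R,T}\,\tfrac{d\eta}{d\lambda}(y)$ for a suitable $d_{R,T}>0$, substituting into the integral gives $\PP^Y_{\phi(0)}\{Y_T\in\B\}\ge d_{R,T}\,\bnu(\B)$, and hence the assertion after absorbing the factor $e^{-MT}$ (or $e^{-M_HT}$) into $d_{R,T}$.

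To see that $\bnu$ is nontrivial and charges every nonempty open set, I would invoke the Stroock--Varadhan support theorem: ellipticity makes $\sigma(\cdot,i^*)$ of full rank, so the associated control system is exactly controllable and the support of $\mu_y$ is the entire slice $\{\zeta\in\C:\zeta(-r)=y\}$. Given open $\D\subset\C$ and $\psi\in\D$, one may track $\psi$ starting from any point near $\psi(-r)$, so $\mu_y(\D)>0$ for all $y$ in a neighborhood of $\psi(-r)$; integrating against the everywhere-positive density $\eta$ then forces $\bnu(\D)>0$.

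The step I expect to be the genuine obstacle is the quantitative, uniform-in-$x$ lower bound on $p_{T-r}(x,y)$. Because the coefficients are only locally Lipschitz and $A(\cdot,i^*)$ is only locally elliptic and may be unbounded, Aronson's estimates are not available off the shelf; I would recover a usable lower bound by stopping $Y$ upon exit from a large ball, where the frozen generator has bounded, uniformly elliptic coefficients, applying the interior Gaussian lower bound there, and transferring it via the strong Markov property and the support theorem. Equally delicate is ensuring that the fixed, full-support reference $\eta$ has tails light enough to be dominated by $\ell_{R,T}$ for every $T>r$ simultaneously (the tails of $\ell_{R,T}$ degenerate as $T\downarrow r$), which is exactly why the faster-than-Gaussian choice of $\eta$ is needed.
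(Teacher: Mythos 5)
Your skeleton coincides with the paper's: reduce to the no--jump event via the exponential functional $\exp(-\int_0^T q_{i^*}(Y_s)\,ds)$ of the frozen diffusion, minorize the law of the segment $Y_T$, and invoke the support theorem for positivity on open sets. (A small slip: the global bound $q_{i^*}\le M$ comes from Assumption \ref{asp2.3}(iii), not from \ref{h1}, and the local bound $M_H$ from Assumption \ref{asp2.4}(iii), not from \ref{h2}; this is harmless because, as in the paper, one confines the path to $\{|x|<R+1\}$ before bounding the exponential.) Where you genuinely diverge is the minorization step, and that is where the gap lies. Your argument needs $\inf_{|x|\le R}p_{T-r}(x,y)\ge d_{R,T}\,\eta(y)$ for a \emph{fixed, full-support} density $\eta$ on all of $\R^n$, i.e.\ a global quantitative lower bound on the transition density. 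Under the standing hypotheses the coefficients are only locally Lipschitz and $A(\cdot,i^*)$ only locally elliptic, with no growth restriction; a strong inward drift (say $b\sim -x|x|^{2m}$) makes $p_{T-r}(x,y)$ decay faster than $e^{-|y|^K}$ for \emph{any} fixed $K$, so no a priori choice of $\eta$ is dominated by $\ell_{R,T}$, and Aronson-type bounds are indeed unavailable. Your fallback --- killing at the exit from a large ball and using the interior Dirichlet heat-kernel bound --- only produces a lower bound for $y$ inside that ball, which forces $\eta$ to be compactly supported; but then, since $\mu_y$ is carried by $\{\zeta\in\C:\zeta(-r)=y\}$, the measure $\bnu=\int\mu_y\,\eta(dy)$ gives zero mass to every open set of segments whose left endpoint lies outside the support of $\eta$, and the claim that $\bnu$ charges every nonempty open subset of $\C$ fails. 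The two requirements (a quantitative Doeblin bound versus full support of the reference density) pull in opposite directions, and the proposal does not resolve that tension.

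For comparison, the paper's route (Lemma \ref{lm2.3} in the appendix) avoids any global density estimate: it works with the semigroup of the diffusion killed on exiting a bounded $D$, applies the strong maximum principle and the \emph{parabolic Harnack inequality} to $u_f(t,x)=\E_x[f(Y(T-t))\1_{\{Y\text{ stays in }D\}}]$ to compare its value at any terminal point with its value at a single interior reference point $x_0$ at time $T/2$, and takes as reference measure the law of the killed process at time $T/2$ started from $x_0$, propagated for $r$ further units of time to obtain a measure on segments. This yields the minorization using only local ellipticity. It does, however, produce a measure carried by segments confined to $D$, so the paper faces the same full-support issue you do; repairing it requires patching the construction over an increasing sequence of domains (and summing the resulting measures with suitable weights), a point the paper passes over silently. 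If you replace your global heat-kernel bound by the killed-process Harnack comparison and then address the patching explicitly, your argument goes through.
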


The three lemmas above will be proved in the appendix.

\begin{lm}\label{lm2.1}
Assume that either {\rm\ref{h1}} or {\rm\ref{h2}} holds.
For any $i\in\N$, there is a $T_{i}>0$
such that for any $T> T_{i}$ and any open set $\B\subset\C$, we have
$$\PP_{\phi,i}\{X_{T}\in\B,\alpha(T)=i^*\}>0,\, \phi\in\C$$
where $i^*$ is as in  {\rm\ref{h1}} or {\rm\ref{h2}} accordingly.
\end{lm}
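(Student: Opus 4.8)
The plan is to reduce the arbitrary initial state $(\phi,i)$ to the distinguished state $i^*$, for which Lemma \ref{lm2.4} already supplies a minorization, and then to splice the two stages together with the Markov property of $(X_t,\alpha(t))$. Concretely, I would first establish the following first-stage claim: there exist a time $t_1=t_1(i)>0$ and a radius $R$ such that, starting from $(\phi,i)$, with positive probability the segment process is, at time $t_1$, in state $i^*$ with its segment confined to a ball, i.e.
\begin{equation*}
\PP_{\phi,i}\{\|X_{t_1}\|\le R,\ \alpha(t_1)=i^*\}>0.
\end{equation*}
Granting this, set $T_i:=t_1+r$. For $T>T_i$ and any nonempty open $\B\subset\C$, applying the Markov property at $t_1$ and then Lemma \ref{lm2.4} with radius $R$ and elapsed time $T-t_1>r$ yields
\begin{equation*}
\PP_{\phi,i}\{X_T\in\B,\ \alpha(T)=i^*\}\ge d_{R,T-t_1}\,\bnu(\B)\,\PP_{\phi,i}\{\|X_{t_1}\|\le R,\ \alpha(t_1)=i^*\}>0,
\end{equation*}
since $\bnu(\B)>0$ as $\B$ is open and nonempty. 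This is precisely the assertion, so the entire argument reduces to the first-stage claim.

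For the first stage I would use the irreducibility hypotheses to produce a switching chain from $i$ to $i^*$ and realize it with positive probability through Lemma \ref{lm2.2}, treating \ref{h1} and \ref{h2} separately because the chains have different geometry. Under \ref{h2} the chain $i=i_1\to\cdots\to i_k=i^*$ satisfies $q_{i_l i_{l+1}}(\phi)>0$ at the single fixed function $\phi$, so I would anchor every switch at $\phi$: starting from $\phi$, apply Lemma \ref{lm2.2} repeatedly with small time steps and shrinking tolerances, using continuity of the $q_{i_l i_{l+1}}$ (Assumption \ref{asp2.3}(iii) or \ref{asp2.4}(iii)) to keep the relevant rates positive along the way, so that after a total time at most $r$ the state is $i^*$ and the segment lies within $\eps$ of $\phi$, giving $R=\|\phi\|+\eps$; no steering of the continuous component is needed here, consistent with ellipticity being assumed only at $i^*$. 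Under \ref{h1} the chain and the functions $\phi_1,\dots,\phi_{k+1}$ depend only on $i$, but the $\phi_l$ differ, so between consecutive switches I must drive the continuous part from a neighborhood of $\phi_l$ into a neighborhood of $\phi_{l+1}$; since \ref{h1}(a) provides uniform ellipticity in every state, a Stroock--Varadhan support argument, analogous to the reachability behind Lemma \ref{lm2.6}, steers the diffusion to trace any prescribed target segment with positive probability, while conditioning on no switch during each drive (legitimate since the switching rates are locally bounded). Interleaving these drives with the switches furnished by Lemma \ref{lm2.2} brings the process to $(\approx\phi_{k+1},i^*)$ at a deterministic time $t_1(i)$ with segment norm at most $R=\|\phi_{k+1}\|+\eps$.

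The main obstacle is that $t_1$, and hence $T_i$, must be uniform in the initial segment $\phi$. Under \ref{h1} this is essentially automatic once one observes that $k,\phi_1,\dots,\phi_{k+1}$ in \ref{h1}(b) depend only on $i$; the only delicate point is the very first drive, where $\phi$ is arbitrary and $\phi(0)$ may be far from $\phi_1(-r)$, so the segment cannot be made sup-close to $\phi_1$ in a single window of length $r$. I would resolve this by splitting the first drive into a repositioning sub-step that steers $X$ to near $\phi_1(-r)$ in time $r$, followed by a tracing sub-step that follows $\phi_1$ over the next $r$, each of positive probability for every $\phi$ by the support theorem and each of fixed duration. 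Under \ref{h2} the chain length $k$ genuinely depends on $\phi$, and the uniform time budget is recovered by concentrating all $k-1$ switches into one interval of length at most $r$: because Lemma \ref{lm2.2} is available for arbitrarily small time steps, one may choose steps summing to at most $r$ and tolerances summing to at most $\eps$, so the total passage time is bounded by $r$ irrespective of $k$. Once uniformity in $\phi$ is secured, the choice $T_i=t_1+r$ completes the proof.
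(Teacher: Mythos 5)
Your proposal is correct and follows essentially the same route as the paper: a first stage that chains switches via Lemma \ref{lm2.2} (steering the continuous component between the $\phi_l$'s under \ref{h1}, and compressing all switches into an arbitrarily short window under \ref{h2}) to reach $i^*$ with a bounded segment at a time depending only on $i$, followed by the minorization of Lemma \ref{lm2.4} and the Markov property to land in $\B$. The only cosmetic difference is that for the inter-switch drives under \ref{h1} you appeal directly to a support-theorem reachability argument, whereas the paper reuses Lemma \ref{lm2.4} at the intermediate states $i_l$ (legitimate there since \ref{h1} provides ellipticity for every $i$); this is the same underlying machinery.
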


\begin{proof}
Suppose that {\rm \ref{h1}} holds with
$i=i_1,\dots,i_k=i^*\in\N$ and $\phi_1,\dots,\phi_{k+1}\in\C$ such that
$q_{i_l,i_{l+1}}(\phi_{l+1})>0,l=1,\dots, k-1$.
Since $q_{i_l,i_{1+1}}(\phi_{l+1})>0$,
it follows from Lemma \ref{lm2.2} that
\begin{equation}\label{e1-lm2.1}
\PP_{\psi,i_l}\{\|X_{\eps_l}-\phi_{l+1}\|<1,\alpha(\eps_l)=i_{l+1}\}>0
\,\text{ if } \|\psi-\phi_{l+1}\|<\eps_{l}
\end{equation}
for some $\eps_l\in(0,1)$.
In view of Lemma \ref{lm2.4},
\begin{equation}\label{e2-lm2.1}
\PP_{\psi,i_l}\{\|X_{1+r}-\phi_{l+1}\|<\eps_{l},\alpha(1+r)=i_{l}\}>0
\,\text{ for any } \psi\in\C,
\end{equation}
and
\begin{equation}\label{e5-lm2.1}
\PP_{\psi,i^*}\{X_{1+r+T'}\in\B,\alpha(1+r+T')=i^*\}>0
\,\text{ for any } \psi\in\C, T'\geq0.
\end{equation}
By \eqref{e1-lm2.1}, \eqref{e2-lm2.1}, and the Markov property of $(X_t,\alpha(t))$,
we have
\begin{equation}\label{e3-lm2.1}
\PP_{\psi,i_l}\{\|X_{1+r+\eps_l}-\phi_{l}\|<1,\alpha(1+r+\eps_l)=i_{l+1}\}>0
\,\text{ for any } \psi\in\C.
\end{equation}
Using \eqref{e5-lm2.1},  \eqref{e3-lm2.1},  and applying the Kolmogorov-Chapman equation again, we obtain
\begin{equation}\label{e4-lm2.1}
\PP_{\psi,i}\left\{X_{k(1+r)+\sum\eps_l+T'}\in\B,\alpha\left(k(1+r)+\sum\eps_l+T'\right)=i^*\right\}>0
\,\text{ for any } \psi\in\C.
\end{equation}
The lemma is proved with $T_{i}=k(2+r)\geq k(1+r)+\sum\eps_l$.

Now, suppose that {\rm \ref{h2}} holds,
it follows from Lemma \ref{lm2.2} and the Kolmogorov-Chapman equation
that
\begin{equation}\label{e6-lm2.1}
\PP_{\phi,i}\{\|X_{\eps}-\phi\|<1,\alpha(\eps)=i^*\}>0
\end{equation}
for sufficiently small $\eps$.
The desired result follows from \eqref{e5-lm2.1} and \eqref{e6-lm2.1}
with $T_i=2+r$.
\end{proof}

\begin{lm}\label{lm2.5}
Assume that either {\rm\ref{h1}} or {\rm\ref{h2}} holds. Let
\begin{equation}\label{e2.22}
\eta_k=\inf\{t>0:\|X_t\|\vee\alpha(t)>k\}.
\end{equation}
 Then for any $(\phi,i)\in \C\times\N$, we have $\PP_{\phi,i}\{\eta_k<\infty\}=1, \; \forall k\in \N.$
\end{lm}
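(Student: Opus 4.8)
My plan is to prove a uniform lower bound for the probability of leaving the region $\{\|\psi\|\le k,\ j\le k\}$ within a fixed time and then to iterate it through the Markov property. Concretely, I would first produce an $S>0$ and a $q\in(0,1)$ with
\[
\inf_{\|\psi\|\le k,\ j\le k}\PP_{\psi,j}\{\eta_k\le S\}\ge q.
\]
Granting this, fix an arbitrary initial $(\phi,i)$. For $n\ge1$, on $\{\eta_k>nS\}$ one has $\|X_{nS}\|\le k$ and $\alpha(nS)\le k$, so the time-homogeneous Markov property of $(X_t,\alpha(t))$ established in \cite{DY}, together with the displayed bound, gives $\PP_{\phi,i}\{\eta_k>(n+1)S\}\le(1-q)\,\PP_{\phi,i}\{\eta_k>nS\}$. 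Iterating yields $\PP_{\phi,i}\{\eta_k>nS\}\le(1-q)^{\,n-1}\to0$, whence $\PP_{\phi,i}\{\eta_k<\infty\}=1$ for every $(\phi,i)$.

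To obtain the uniform bound I would fix a starting state $i\le k$ and build an explicit escape in three stages, using that under either {\rm\ref{h1}} or {\rm\ref{h2}} there is a state $i^*$ at which $A(\cdot,i^*)$ is elliptic on compact sets and which is reachable in the sense of Lemma \ref{lm2.1}. In the first stage, \eqref{e0-lm2.6} with $R=k$ supplies a compact $\A\subset\C$ with $p_1:=\inf_{\|\phi\|\le k}\PP_{\phi,i}\{X_r\in\A,\alpha(r)=i\}>0$. In the second stage I fix a bounded open set $\B\subset\C$ and a time $T'>T_i$; Lemma \ref{lm2.1} makes $\psi\mapsto\PP_{\psi,i}\{X_{T'}\in\B,\alpha(T')=i^*\}$ strictly positive, the Feller property makes it lower semicontinuous, and compactness of $\A$ then yields a positive minimum $p_2$ over $\A$. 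In the third stage, if $i^*\le k$ then \eqref{e1-lm2.6} applied at $i^*$ (with $R$ a bound for $\B$ and threshold $k$) gives $p_3:=\inf_{\psi\in\B}\PP_{\psi,i^*}\{\|X_t\|>k\text{ for some }t\in[0,T_{i^*}]\}>0$, while if $i^*>k$ the mere arrival at $\alpha=i^*$ is already an exit and this stage is dropped. Chaining the three stages through the Chapman--Kolmogorov equation would give $\PP_{\phi,i}\{\eta_k\le S_i\}\ge p_1p_2p_3>0$ with $S_i:=r+T'+T_{i^*}$, uniformly in $\|\phi\|\le k$.

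Since $\{i:i\le k\}$ is finite, I would then set $S:=\max_{i\le k}S_i$ and $q:=\min_{i\le k}p_1p_2p_3>0$; because $\{\eta_k\le S_i\}\subset\{\eta_k\le S\}$, combining over the finitely many states $i\le k$ gives $\inf_{\|\phi\|\le k,\,i\le k}\PP_{\phi,i}\{\eta_k\le S\}\ge q$, which is exactly the bound used above. The step I expect to be the main obstacle is the second stage: Lemma \ref{lm2.1} only provides pointwise positivity, whereas the iteration demands a bound uniform over the initial segment, and $\{\|\phi\|\le k\}$ is bounded but not compact in the infinite-dimensional space $\C$, so positivity together with lower semicontinuity does not by itself produce a positive infimum. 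It is precisely the compactification afforded by \eqref{e0-lm2.6} that repairs this, after which the Feller property delivers the uniform bound on the genuinely compact set $\A$. (Under {\rm\ref{h1}} every state is elliptic, so \eqref{e1-lm2.6} produces an exit directly and one can bypass the routing to $i^*$.)
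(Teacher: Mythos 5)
Your argument is correct, and it reaches the conclusion by a genuinely different route than the paper. The paper argues by contradiction: it fixes one initial datum, supposes $p_0=\PP_{\phi,i}\{\eta_k<\infty\}<1$, picks $T'$ so that $\PP_{\phi,i}\{\eta_k\le T'\}$ nearly exhausts $p_0$ (see \eqref{e5-lm2.5}), uses a Kolmogorov--Centsov/H\"older estimate \eqref{e4-lm2.5} to force $X_{T'}$ into a compact set $\A_k$ on most of $\{\eta_k>T'\}$, and then invokes a uniform positive exit probability from $\A_k\times\{j\le k\}$ (built, as in your stages, from Lemma \ref{lm2.1}, \eqref{e1-lm2.6}, the Feller property and compactness) to add strictly positive extra mass to $\PP_{\phi,i}\{\eta_k<\infty\}$, contradicting the definition of $p_0$. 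You instead establish a minorization $\inf_{\|\psi\|\le k,\,j\le k}\PP_{\psi,j}\{\eta_k\le S\}\ge q$ that is uniform over the entire (bounded but non-compact) sublevel set and then iterate through the Markov property. The two proofs share all the essential ingredients --- reachability of $i^*$ via Lemma \ref{lm2.1}, escape via ellipticity at $i^*$ through \eqref{e1-lm2.6}, and a compactness/Feller argument to upgrade pointwise positivity to a uniform bound --- but they deploy the compactification at different places: the paper applies the H\"older bound to the path up to the large time $T'$, while you apply \eqref{e0-lm2.6} at the front end of each trial, which is precisely the right fix for the obstacle you identify (positivity plus lower semicontinuity alone gives nothing on the non-compact set $\{\|\psi\|\le k\}$). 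Your version buys more: the geometric decay $\PP_{\phi,i}\{\eta_k>nS\}\le(1-q)^{n-1}$ shows $\eta_k$ has exponential moments, whereas the contradiction argument yields only the qualitative statement $\PP_{\phi,i}\{\eta_k<\infty\}=1$; the paper's version is marginally lighter in that it never needs the minorization to hold uniformly over all of $\{\|\psi\|\le k\}\times\{j\le k\}$, only over the single compact set $\A_k$.
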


\begin{proof}
Suppose that
$p_0=\PP\{\eta_k<\infty\}<1.$
Since $A(x,i^*)$ is elliptic, in view of \eqref{e1-lm2.6}, there is a $T>0$ such that
\begin{equation}\label{e1-lm2.5}
\PP_{\varphi',i^*}\{\eta_k<T\}>0,\; \forall k>1, \|\varphi' \|\leq1.
\end{equation}
In view of Lemma \ref{lm2.1} and \eqref{e1-lm2.5},
$\forall (\psi,j)\in \C\times\N$ there are $T_{\psi,j}, p_{\psi,j}>0$
such that
\begin{equation}\label{e2-lm2.5}
\PP_{\psi,j}\{\eta_k<T_{\psi,j}\}>2p_{\psi,j}.
\end{equation}
Due to the Feller property of $(X_t,\alpha(t))$,
 there exists a $\delta_{\psi,j}>0$ such that
\begin{equation}\label{e3-lm2.5}
\begin{aligned}
\PP_{\psi',i}\{\eta_k\leq T_{\phi,i^*}\}>p_{\psi,j},\;\forall \;\psi'\in\C,\|\psi-\psi'\|<\delta_{\psi,j}.
\end{aligned}
\end{equation}
Since $\sigma(\cdot,i)$ and $b(\cdot,i)$ are locally compact for each $i\in\N$, similar to \cite[Lemma 4.6]{DY}, we can show that there is
an $h_k>0$ such that for any $t>0$,
\begin{equation}\label{e4-lm2.5}
\begin{aligned}
\PP_{\phi,i}\left\{\frac{|X(s)-X(s')|}{(s-s')^{0.25}}\leq h_k, \; \forall\; 0\vee (\eta_k\wedge t-r)\leq s'<s<\eta_k\wedge t\right\}
>\frac{1+p_0}{2}.
\end{aligned}
\end{equation}
Since the set
$$\A_k=\left\{\psi\in\C:|\psi|\leq k,\frac{|\psi(s)-\psi(s')|}{(s-s')^{0.25}}\leq h_k, \forall -r\leq s'<s\leq 0\right\} $$
is compact in $\C,$ we have from \eqref{e3-lm2.5} that there exist $T_k$ and $\tilde p_k$ such that
\begin{equation}\label{e7-lm2.5}
\PP_{\psi,j}\{\eta_n<T_k\}>\tilde p_k>0,\; \forall \; \psi\in \A_k, j\leq k.
\end{equation}
Since $\lim_{t\to\infty}\PP_{\phi,i}\{\eta_k< t\}=p_0<1$, there is a $T'>0$ such that
\begin{equation}\label{e5-lm2.5}
p_0\geq \PP_{\phi,i}\{\eta_k\leq T'\}\geq p_0-\frac{1-p_0}{2}\tilde p_k.
\end{equation}
In view of \eqref{e4-lm2.5} and \eqref{e5-lm2.5}, we have $\PP_{\phi,i}\{T'<\eta_k,X_{T'}\in \A_k\}>\frac{1-p_0}{2}$.
By the Markov property and \eqref{e7-lm2.5},
\begin{equation}\label{e6-lm2.5}
\begin{aligned}
\PP_{\phi,i}\{T'<\eta_k<\infty\}&\geq \PP_{\phi,i}\{X_{T'}\in \A_k, T'<\eta_k\}\\
&\geq \E_{\phi,i}\left[\1_{\{T'<\eta_k,X_{T'}\in \A_k\}}\PP_{X_{T'},\alpha(T')}\big\{\eta_k<\infty\big\}\right]\\
&>\frac{1-p_0}{2}\tilde p_k.
\end{aligned}
\end{equation}
We have from \eqref{e5-lm2.5} and \eqref{e6-lm2.5} that
$$
\begin{aligned}
p_0= \PP_{\phi,i}\{\eta_k<\infty\}=& \PP_{\phi,i}\{\eta_k\leq T'\}+\PP_{\phi,i}\{T'<\eta_k<\infty\}\\
> & p_0-\frac{1-p_0}{2}\tilde p_k+ \frac{1-p_0}{2}\tilde p_k= p_0,
\end{aligned}
$$
 which is a contradiction.
Thus $p_0=1$.
\end{proof}

\begin{deff}{\rm
The process $\{(X_t,\alpha(t)): t\geq0\}$ is said to be recurrent (resp., positive recurrent) relative to a measurable set $\CE\in \C\times\N$
if $$\PP_{\phi,i}\{(X_t,\alpha(t))\in\CE\,\text{ for some } t\geq 0\}=1$$
$$\big(\text{resp. } \E_{\phi,i}\left[\inf\{t>0: (X_t,\alpha(t))\in\CE\}\right]<\infty\big)$$
 for any $(\phi,i)\in\C\times\N$.}
\end{deff}

\begin{thm}\label{thm2.1}
Suppose that either hypothesis {\rm (H1)} or {\rm(H2)} holds. Let $\D$ be a bounded open subset of $\C$ and $N$ be a finite subset of $\N$. If $(X_t,\alpha(t))$ is recurrent relative to $\D\times N$ then $(X_t,\alpha(t))$ is recurrent relative to $\D'\times N'$ for any open set $\D'\subset\C$ and a  finite set $N'\subset \N$ containing $i^*$ with $i^*$ given in either {\rm (H1)} or {\rm (H2)} according to which hypothesis is satisfied.
\end{thm}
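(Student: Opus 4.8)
The plan is to reduce everything to a single quantitative statement: there exist a time $T^*>0$ and a constant $p^*>0$, depending only on $\D,N,\D',N'$, such that
\[
\PP_{\psi,i}\{(X_t,\alpha(t))\in\D'\times N'\ \text{for some}\ t\in[0,T^*]\}\ge p^*\quad\text{for all }\|\psi\|\le R,\ i\in N,
\]
where $R$ is chosen so that $\D\subset\{\phi:\|\phi\|\le R\}$. Granting this uniform lower bound, the theorem follows by a renewal/Borel--Cantelli scheme. Starting from an arbitrary $(\phi,i)$, recurrence relative to $\D\times N$ produces a first time $S_1<\infty$ with $(X_{S_1},\alpha(S_1))\in\overline{\D}\times N$ (by continuity of $t\mapsto X_t$ and right-continuity of $\alpha$), so in particular $\|X_{S_1}\|\le R$ and $\alpha(S_1)\in N$. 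By the strong Markov property and the uniform bound, the process reaches $\D'\times N'$ during $[S_1,S_1+T^*]$ with conditional probability at least $p^*$; if it fails, I restart from $(X_{S_1+T^*},\alpha(S_1+T^*))$, invoke recurrence relative to $\D\times N$ to obtain the next entrance time $S_2>S_1+T^*$, and iterate. The successive attempts each succeed with conditional probability at least $p^*$, so the probability of never hitting $\D'\times N'$ is bounded by $\prod_n(1-p^*)=0$; hence $(X_t,\alpha(t))$ is recurrent relative to $\D'\times N'$.

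The substance is therefore the uniform lower bound, and here the obstacle is that $\C$ is infinite dimensional, so the ball $\{\|\psi\|\le R\}$ is \emph{not} compact and a naive ``lower semicontinuity plus compactness'' argument is unavailable. I would circumvent this by chaining three transitions, each uniform on the relevant set. First, for each fixed $i\in N$, Lemma~\ref{lm2.6} furnishes a \emph{compact} set $\A_i\subset\C$ with $\inf_{\|\psi\|\le R}\PP_{\psi,i}\{X_r\in\A_i,\ \alpha(r)=i\}=:c_1(i)>0$; this collapses the non-compact ball onto a compact set in time $r$ while preserving the discrete state $i$. Second, on $\A_i$ I upgrade the mere positivity of Lemma~\ref{lm2.1} to a uniform bound: fixing $T_0>\max_{i\in N}T_i$ and the open ball $\B_1=\{\|\phi\|<1\}$, Lemma~\ref{lm2.1} gives $\PP_{\phi,i}\{X_{T_0}\in\B_1,\ \alpha(T_0)=i^*\}>0$ for every $\phi$, while the Markov--Feller property makes $\phi\mapsto\PP_{\phi,i}\{(X_{T_0},\alpha(T_0))\in\B_1\times\{i^*\}\}$ lower semicontinuous (Portmanteau applied to the open set $\B_1\times\{i^*\}$); minimizing a positive lower semicontinuous function over the compact set $\A_i$ yields $c_2(i)>0$. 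Third, from state $i^*$ with $\|\phi\|<1$ I invoke the minorization of Lemma~\ref{lm2.4} with $R=1$ and any $T'>r$: since $\D'$ is a nonempty open set, $\bnu(\D')>0$, whence $\PP_{\phi,i^*}\{X_{T'}\in\D',\ \alpha(T')=i^*\}\ge d_{1,T'}\bnu(\D')=:c_3>0$ uniformly in $\|\phi\|\le 1$, and since $i^*\in N'$ this lands in $\D'\times N'$. It is precisely this last step where Lemma~\ref{lm2.4} is indispensable, because the intermediate set $\B_1$ around $i^*$ is again only bounded, not compact, so the minorization (uniform on a non-compact ball) replaces the compactness argument used in the second step.

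Concatenating the three transitions via the Markov property and taking $T^*=r+T_0+T'$ and $p^*=\min_{i\in N}c_1(i)c_2(i)c_3$ (a finite minimum of positive numbers, hence positive because $N$ is finite) delivers the required uniform lower bound. The two hypotheses \ref{h1} and \ref{h2} enter only through the inputs to Lemmas~\ref{lm2.1}, \ref{lm2.4}, and \ref{lm2.6}, so no separate treatment of the two cases is needed beyond selecting the corresponding $i^*$. The delicate points I expect are bookkeeping rather than conceptual: checking that the hitting instant $S_1$ of the open set $\D\times N$ indeed satisfies $\|X_{S_1}\|\le R$ and $\alpha(S_1)\in N$ (so the uniform bound applies at each restart), and using the finiteness of $N$ to pass from the state-indexed data $\A_i,\,c_1(i),\,c_2(i)$ to uniform constants.
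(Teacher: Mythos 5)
Your proposal is correct and follows essentially the same route as the paper: use Lemma \ref{lm2.6} to collapse the non-compact ball onto a compact set while keeping the discrete state, combine Lemma \ref{lm2.1} with the Feller property and compactness to get a uniform positive probability of reaching $\D'\times N'$, and then iterate via the strong Markov property and recurrence relative to $\D\times N$ to force a hit with probability one. The only difference is that the paper applies Lemma \ref{lm2.1} with $\B=\D'$ directly from the compact set (the Feller/lower-semicontinuity argument already applies there since $\D'\times\{i^*\}$ is open), so your extra intermediate stop in $\B_1\times\{i^*\}$ followed by the Lemma \ref{lm2.4} minorization is a harmless but unnecessary detour.
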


\begin{proof}
Let $(\phi_0,i_0)\in\C\times\N$.
In view of Lemma \ref{lm2.6}, there exists a compact set $\A_\D\subset\C$ such that
\begin{equation}\label{e1-thm2.1}
\inf_{\{(\psi,j)\in \D\times N\}}\PP_{\psi,j}\{X_r\in \A_\D, \alpha(r)\in N\}:=p_{\D,N}>0.
\end{equation}
Since $i^*\in N'$, by Lemma \ref{lm2.1}, the Feller property of $(X_t,\alpha(t))$ and the compactness of $\A_\D$,
there is a $T>0$ such that
\begin{equation}\label{e3-thm2.1}
\inf_{\{(\psi,j)\in \A_\D\times N\}}\PP_{\psi,j}\{X_{T}\in \D'\times N'\}\geq\eps_0.
\end{equation}
Define the stopping times
$\vartheta_0=0,\vartheta_{k+1}=\inf\{t>\vartheta_k+T:X_{\eta_{k+1}}\in \D\times N\}.$
By the hypothesis of the theorem, $$\PP_{\phi_0,i_0}\{\vartheta_k<\infty\}=1,\;\forall \; k\in \N.$$
On the other hand,
it follows from \eqref{e1-thm2.1} and \eqref{e3-thm2.1} that
\begin{equation}\label{e4-thm2.1}
\PP_{\psi,j}\{(X_{T},\alpha(T)\in\D'\times N'\}\geq p_{\D,N}\eps_0, \text{ for all } (\psi,j)\in \D\times N.
\end{equation}
Consider the events $$A^k=\{(X_{\vartheta_k+T}\notin\D'\times N'\},\,k\in\N.$$ By the strong Markov property of $(X_t, \alpha(t))$, we have
$$
\begin{aligned}
\PP_{\phi_0,i_0}\left(\bigcap_{k'=k}^\infty A^{k'}\right)=\lim_{l\to\infty}\PP_{\phi_0,i_0}\left(\bigcap_{k'=k}^l A^{k'}\right)\leq\lim_{l\to\infty}{(1-p_{\D,N}\eps_0)}^{l-k}=0.\end{aligned}
$$
Thus
$$\PP_{\phi_0,i_0}\left(\bigcap_{k'=k}^\infty A^{k'}\right)=0.$$
It indicates that the event $\big\{\big(X_{\vartheta_k+T},\alpha(\vartheta_k+T)\big)\in \D'\times N\big\}$ must occur with probability 1.
\end{proof}

\begin{thm}\label{thm2.2}
Suppose that either hypothesis {\rm\ref{h1}} or {\rm\ref{h2}} holds. Let $V(\cdot,\cdot)\in \BF$ such that
$$
\lim_{n\to\infty}\inf\{V(\phi,i):|\phi(0)|\vee i\geq n\}=\infty.$$
Suppose further that there are positive constants $C$ and $H$ such that
\begin{equation}\label{e1-thm2.2}
\LL V(\phi,i)\leq C \1_{\{V(\phi,i)\leq H\}}.
\end{equation}
Then the process $(X_t,\alpha(t))$ is recurrent relative to $\D\times N$, where $\D$ is any open bounded subset of $\C$ and $N\subset\N$ contains $i^*$.
\end{thm}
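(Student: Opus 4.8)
The plan is to run a Lyapunov (supermartingale) argument built on the functional It\^o formula \eqref{f.Ito}, combined with the irreducibility estimates of Lemmas \ref{lm2.6}, \ref{lm2.1} and \ref{lm2.5} and the comparison principle of Theorem \ref{thm2.1}. By Theorem \ref{thm2.1} it suffices to establish recurrence relative to a \emph{single} bounded open set $\D_0\times N_0$ with $N_0$ finite; the conclusion for arbitrary bounded open $\D$ and $N\ni i^*$ then follows at once. The coercivity hypothesis furnishes, for the given level $H$, a radius $R_0=R_0(H)$ with $\mathcal G:=\{(\phi,i):V(\phi,i)\le H\}\subset\{(\phi,i):|\phi(0)|\vee i\le R_0\}$, so a visit to $\mathcal G$ controls the \emph{current} state $(X(t),\alpha(t))$ but not the sup-norm $\|X_t\|$ of the segment. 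We may also assume $V\ge 0$ after adding a constant, which changes neither $\LL V$ (constants are annihilated by every term of \eqref{e:LV}) nor the sublevel sets.

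\textbf{Step 1: reaching $\mathcal G$ almost surely.} Put $\sigma=\inf\{t\ge 0:V(X_t,\alpha(t))\le H\}$, and assume $V(\phi_0,i_0)>H$, else $\sigma=0$. On $[0,\sigma)$ one has $V>H$, hence $\LL V\le 0$ by \eqref{e1-thm2.2}. Fix $k>\|\phi_0\|$ and apply \eqref{f.Ito} stopped at $\sigma\wedge\eta_k\wedge t$, with $\eta_k$ as in \eqref{e2.22}: the process stays in $B_k$, where $V\in\BF$ and its derivatives are bounded, so the stopped $V$ is a bounded nonnegative supermartingale and, letting $t\to\infty$ (using $\eta_k<\infty$ a.s.\ from Lemma \ref{lm2.5}), $\E_{\phi_0,i_0}V(X_{\sigma\wedge\eta_k},\alpha(\sigma\wedge\eta_k))\le V(\phi_0,i_0)$. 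The crucial observation is that for $k>\|\phi_0\|$ the first sup-norm exit $\eta_k$ is also the first time the current magnitude $|X(t)|\vee\alpha(t)$ reaches $k$ --- before that time every window $[t-r,t]$ contains only values of size $\le k$ --- so $V(X_{\eta_k},\alpha(\eta_k))\ge\gamma_k:=\inf\{V(\phi,i):|\phi(0)|\vee i\ge k\}$, and $\gamma_k\to\infty$. Splitting the inequality over $\{\sigma<\eta_k\}$ and $\{\sigma\ge\eta_k\}$ (on the latter $V\ge\gamma_k$, on the former $V\ge 0$) gives $\PP_{\phi_0,i_0}\{\sigma\ge\eta_k\}\le V(\phi_0,i_0)/\gamma_k\to 0$; since $\eta_k<\infty$ a.s., we conclude $\PP_{\phi_0,i_0}\{\sigma<\infty\}=1$. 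Restarting at the state reached on $\mathcal G$ and iterating via the strong Markov property shows the process visits $\mathcal G$ infinitely often.

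\textbf{Step 2: from current-state control to a bounded open target.} A visit to $\mathcal G$ only forces $|X(\sigma)|\vee\alpha(\sigma)\le R_0$, while $\|X_\sigma\|$ may be large from a recent excursion, so $X_\sigma$ need not lie in any prescribed norm-ball. To manufacture a norm-bounded configuration I would run the process a further time $r$: with probability bounded below, uniformly over current states in the ball of radius $R_0$ (using boundedness of $b(\cdot,i),\sigma(\cdot,i)$ there and of the switching rates via Assumption \ref{asp2.3}(iii) or \ref{asp2.4}(iii)), the diffusion stays in the ball of radius $R_0+1$ and $\alpha$ makes only finitely many jumps on $[\sigma,\sigma+r]$, so $X_{\sigma+r}$ lands in a fixed compact $\A\subset\C$ with $\alpha(\sigma+r)$ in a finite set. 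From such a compact configuration, Lemma \ref{lm2.1}, Lemma \ref{lm2.6} and the Feller property yield a uniform lower bound on the probability of entering a prescribed bounded open $\D_0\times N_0$ (with $i^*\in N_0$) within a bounded time. Coupling the infinitely many returns to $\mathcal G$ with these uniformly positive success probabilities, a Borel--Cantelli argument of the kind used in the proof of Theorem \ref{thm2.1} shows $(X_t,\alpha(t))$ enters $\D_0\times N_0$ a.s., i.e.\ recurrence relative to $\D_0\times N_0$. Theorem \ref{thm2.1} then upgrades this to recurrence relative to every $\D\times N$ with $N\ni i^*$.

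\textbf{Main obstacle.} The heart of the matter is the mismatch between the coercivity, which is phrased through the endpoint $\phi(0)$ (the current position) and the index $i$, and both the natural exit times and the target sets, which live in the sup-norm of the segment in the infinite-dimensional space $\C$. The first half of this gap is closed cleanly by identifying $\eta_k$ with the first passage of $|X(t)|\vee\alpha(t)$ to level $k$, which is exactly what makes the functional-It\^o supermartingale estimate produce a large value of $V$ at the escape time. The genuinely delicate part is Step 2: promoting ``the current state is bounded'' to ``the whole segment lies in a bounded open set'' requires confining $(X(t),\alpha(t))$ to a compact region over an entire window of length $r$ and executing the repeated-trial scheme so that the It\^o localization in $B_k$ and the Markov--Feller lower bounds remain uniform across trials.
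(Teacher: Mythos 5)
Your overall architecture coincides with the paper's: first show that the sublevel set $\{V\le H\}$ is hit almost surely via the functional It\^o formula \eqref{f.Ito}, the coercivity in $|\phi(0)|\vee i$, and Lemma \ref{lm2.5}; then convert ``the current state is bounded'' into ``the whole segment lies in a bounded set'' by running the process a further time $r$; then iterate with the strong Markov property and the Borel--Cantelli scheme from the proof of Theorem \ref{thm2.1}, and finish by invoking Theorem \ref{thm2.1}. Your Step 1 is exactly the paper's argument, and your observation that for $k>\|\phi_0\|$ the segment-norm exit time $\eta_k$ coincides with the first passage of the current magnitude $|X(t)|\vee\alpha(t)$ past level $k$ is precisely what makes the supermartingale estimate yield a large value of $V$ at the escape time; the paper uses this point without comment.

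The one place you diverge is the confinement step, and there your mechanism is weaker than the paper's. You propose to bound from below, uniformly over states with $|X(\sigma)|\vee\alpha(\sigma)\le R_0$, the probability that the diffusion stays in a ball and $\alpha$ stays in a finite set on $[\sigma,\sigma+r]$, ``using boundedness of the switching rates via Assumption \ref{asp2.3}(iii) or \ref{asp2.4}(iii)''. Under Assumption \ref{asp2.4}(iii) the rates $q_i(\phi)$ are only bounded on sets $\{\|\phi\|\le H\}$, and at the hitting time $\sigma$ of $\{V\le H\}$ only the endpoint $|X(\sigma)|$ is controlled: for $t\in[\sigma,\sigma+r)$ the segment $X_t$ still carries pre-$\sigma$ history of possibly arbitrarily large sup-norm, so the no-switch probability has no uniform lower bound, and ``finitely many jumps'' does not by itself keep $\alpha(\sigma+r)$ in a finite set. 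The paper sidesteps all of this by reusing the Lyapunov function: choosing $n_0$ with $\inf\{V(\phi,i):|\phi(0)|\vee i\ge n_0\}\ge 2(H+C+r)$ and applying \eqref{f.Ito} once more on $[0,r\wedge\eta_{n_0}]$ gives $\E V\le H+C+r$, hence $\PP\{\eta_{n_0}<r\}\le 1/2$ and therefore $\PP\{\|X_r\|\vee\alpha(r)\le n_0\}\ge 1/2$ whenever $V(\psi,j)\le H$; the jump term is absorbed into the bound $\LL V\le C\1_{\{V\le H\}}$, so no separate estimate on the switching rates is needed. Replace your direct coefficient estimate by this second application of the functional It\^o formula and the rest of your outline goes through as you describe.
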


\begin{proof}
Let $\upsilon_H=\inf\{t>0:V(X_t,\alpha(t))\leq H\}$ and  $\eta_k$ be defined as in Lemma \ref{lm2.5}. In view of Lemma \ref{lm2.5}, $\PP_{\psi,j}\{\eta_k<\infty\}=1, \forall k\in\N$. Let $t>0$. By It\^o's formula
$$\E_{\psi,j}V\Big(X_{t\wedge\upsilon_H\wedge\eta_k},\alpha(t\wedge\upsilon_H \wedge \eta_k)\Big)\leq V(\psi,j).$$
Letting $t\to\infty$, we obtain
$$
\begin{aligned}
&\E_{\psi,j}V\big(X_{\upsilon_H\wedge \eta_k},\alpha(\upsilon_H\wedge\eta_k)\big)\leq V(\psi,j)
,\end{aligned}
$$
which implies
$$
\PP_{\psi,j}\{\upsilon_H>\eta_k\}\leq \frac{V(\psi,j)}{\inf\{V(\phi,i):|\phi(0)|\vee i\geq k\}}.
$$
Letting $k\to\infty$ yields $\PP_{\psi,j}\{\upsilon_H>\eta_k\}\to 0$. Thus,
\begin{equation}\label{e2-thm2.2}
\PP_{\psi,j}\{\upsilon_H<\infty \}=1, \; \forall (\psi,j)\in\C\times \N.
\end{equation}
Now, let $k_0>0$ such that
$\inf\{V(\phi,i):|\phi(0)|\vee i\geq k_0\}\geq 2(H+C+r).$ For any $(\psi,j)\in \C\times\N$ satisfying $V(\psi,j)\leq H$. We have
from \eqref{e1-thm2.2} and It\^o's formula that
$$
\begin{aligned}
\E_{\psi,j}V\big(X_{r\wedge \eta_{n_0}},\alpha(r\wedge\eta_{n_0})\big)\leq H+C+r,
\end{aligned}
$$
which implies
\begin{equation}\label{e4-thm2.2}
\PP_{\psi,j}\{\eta_{n_0}<r\}\leq \frac{H+C+r}{2(H+C+r)}\leq \frac{1}{2}.
\end{equation}
Thus,
\begin{equation}\label{e3-thm2.2}
\PP_{\psi,j}\{\|X_r\|<n_0, \alpha(r)<n_0\}\geq\PP_{\psi,j}\{\eta_{n_0}>r\}>\frac{1}{2}\,\text{ provided } V(\psi,j)\leq H.
\end{equation}

Now, fix $(\phi_0,i_0)\in \C\times \N.$ By \eqref{e2-thm2.2} and Lemma \ref{lm2.5}, we can define almost surely finite stopping times
\begin{equation}\label{e5-thm2.2}
\begin{aligned}
\zeta_1&=\inf\{t\geq 0:\;V(X_t,\alpha(t))\leq H\},\\
\zeta_{2k}&=\inf\{t\geq \zeta_{2k-1}+r:\; |X_t|\vee \alpha(t)\geq n_0\},\\
\zeta_{2k+1}&=\inf\{t\geq \zeta_{2k}:\;V(X_t,\alpha(t))\leq H \}.
\end{aligned}
\end{equation}
Define events $B_k=\{\|X_{\zeta_{2k+1}+r}\|\vee\alpha(\zeta_{2k+1}+r)\leq n_0\}.$
In view of \eqref{e3-thm2.2} and the strong Markov property of $(X_t,\alpha(t))$,
 we can use standard arguments in Theorem \ref{thm2.1} to show that $$\PP_{\phi_0,i_0}\{B_k\text{ occurs for some }k\}=1.$$
 Thus, $(X_t,\alpha(t))$ is recurrent relative to $\{(\phi,i):\|\phi\|\vee i\leq n_0\}$. Combining this with  Theorem \ref{thm2.1} yields the desired result.
\end{proof}

\begin{exam}\label{ex1}
{\rm Let
$$
\begin{aligned}
&q_{12}(\phi)=1, q_{1j}(\phi)=0\,\text{ for }j\geq 3;\\
&q_{i,i-1}(\phi)=C_i+(1+\|\phi\|)^{-1},q_{i,i+1}(\phi)=C_i+(1+\|\phi\|)^{-1}\,\text{ for }i\geq 2,\,C_i\geq0;\\
&q_{ij}(\phi)=0\,\text{ for } i\geq 2, j\notin\{i-1,i,i+1\}.
\end{aligned}
$$
Suppose the switching diffusion is given by
$$dX(t)=\sigma(X(t),\alpha(t))dW(t)-X(t)b(X(t),\alpha(t))dt$$
where $b(x,i)>0, \sigma(x,i)$ are locally Lipchitz in $x$ and uniformly bounded
in $K\times\N$ for each compact set $K\in\R$.
Let $f(x)$ be  twice continuously differentiable such that $f(x)>0$ and $f(x)=|x|$ if $|x|\geq 1.$ Let
\begin{equation}\label{e2-ex1}
\kappa:=\sup_{|x|\leq 1,i\in \N}\left|-\left[\dfrac{d f}{dx}(x)\right]xb(x,i)+\dfrac12\left[\frac{d^2f}{dx^2}f(x)\right]\sigma^2(x,i)\right|<\infty.
\end{equation}
Let $$V(\phi,i)=
f(\phi(0))+2\kappa i.
$$
Direct computation leads to
\begin{equation}\label{e3-ex1}
\LL V(\phi,i)=
\begin{cases}
-\left[ \disp\frac{d f}{dx}(\phi(0))\right]\phi(0)b(\phi(0),i)+\frac{\sigma^2(i)}2\left[\frac{d^2f}{dx^2}(\phi(0))\right]-2\kappa(1+\|\phi\|)^{-1}&\text{ if } i>1,\\
-\left[\disp\frac{d f}{dx}(\phi(0))\right]\phi(0)b(\phi(0),i)+\left[\frac{d^2f}{dx^2}(\phi(0))\right]+2\kappa&\text{ if } i=1.
\end{cases}
\end{equation}
In view of \eqref{e2-ex1} and the fact that $\frac{d }{dx}f(x)=\sgn(x), \frac{d^2}{dx^2}f(x)
=0$ for $|x|\geq1$, $i>1$ we have
\begin{equation}\label{e4-ex1}
\LL V(\phi,i)\leq 0\,\forall \phi\in\C, i>1.
\end{equation}
By \eqref{e3-ex1},
if we assume further
$\lim_{x\to\infty} |x|b(x,1)=\infty$,
then
we can verify that
\begin{equation}\label{e5-ex1}
\LL V(\phi,1)\leq \tilde C_1\1_{\{|\phi(0)|<\tilde H\}}-\tilde C_2,\forall \phi\in\C,
\end{equation}
where $\tilde C_1,\tilde C_2, \tilde H$ are some positive constants.
In view of \eqref{e4-ex1} and \eqref{e5-ex1},
we can easily check that
\eqref{e1-thm2.2} holds
in this example, for $V(\phi,i)$ defined above
and suitable $C, H$.
Thus, if there exists $i^*\in\N$ such that $\sigma(x,i^*)\ne 0$ for any $x\in\R$,
then the conclusion of Theorem \ref{thm2.2} holds for this example.
}\end{exam}

 To proceed, let us recall some technical concepts and results needed to prove the main theorem.
 Let ${\bf\Phi}=(\Phi_0,\Phi_1,\dots)$ be a discrete-time Markov chain on a general state space $(E,\fE)$, where $\fE$ is a countably generated $\sigma$-algebra.
 Denote by $\mathcal{P}$ the Markov transition kernel for ${\bf\Phi}$.
If there is a non-trivial $\sigma$-finite positive measure $\varphi$ on $(E,\fE)$ such that for
any $A\in\fE$ satisfying $\varphi(A)>0$ we have
$$\sum_{n=1}^\infty \mathcal{P}^n(x, A)>0,\, x\in E$$
 where  $ \mathcal{P}^n$ is the $n$-step transition kernel of ${\bf\Phi}$ then the Markov chain ${\bf\Phi}$ is called $\varphi$-\textit{irreducible}.
It can be shown (see \cite{EN}) that if ${\bf\Phi}$ is $\varphi$-irreducible, then there exists a positive integer $d$ and disjoint subsets $E_0,\dots,
E_{d-1}$ such that for all $i=0,\dots, d-1$ and all $x\in E_i$ we have
$$\mathcal{P}(x,E_j)=1 \text{ where } j=i+1 \text{ (mod } d)$$
and
$$ \varphi \left(E\setminus \bigcup_{i=0}^{d-1}E_i\right)=0.$$
The smallest positive integer $d$ satisfying the above is called the period of ${\bf\Phi}.$
An \textit{aperiodic} Markov chain is a chain with period $d=1$.
A set $C\in\fE$ is called \textit{petite} if there exists a non-negative sequence $(a_n)_{n\in\N}$ with $\sum_{n=1}^\infty a_n=1$
and a nontrivial positive measure $\nu$ on $(E,\fE)$
satisfying that
$$\sum_{n=1}^\infty a_n \mathcal{P}^n(x, A)\geq\nu(A),\,\, x\in C, A\in\fE.$$

\begin{lm}\label{lm2.7}
Assume  either {\rm\ref{h1}} or {\rm\ref{h2}} holds. The Markov chain $\{(X_k,\alpha(k)): k\in\N\}$ is irreducible and aperiodic.
Moreover, for every bounded set $\D\in\C$ and a finite set $N\in\N$,
the set $\D\times N$ is petite for $\{(X_k,\alpha(k)): k\in\N\}$.
\end{lm}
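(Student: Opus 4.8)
The plan is to verify the three assertions---irreducibility, aperiodicity, and petiteness of bounded sets---by combining the local positivity and minorization estimates of Lemmas \ref{lm2.2}, \ref{lm2.6}, \ref{lm2.4}, and \ref{lm2.1}, and then reading off the conclusions from the standard theory of $\varphi$-irreducible chains. Throughout, let $\mathcal{P}$ denote the one-step kernel of the skeleton chain on $E=\C\times\N$, and take as irreducibility measure $\varphi=\bnu\otimes\delta_{i^*}$, i.e.\ $\varphi(A)=\bnu(A_{i^*})$ with $A_{i^*}=\{\psi:(\psi,i^*)\in A\}$, where $\bnu$ is the measure furnished by Lemma \ref{lm2.4}. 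For irreducibility, fix $(\phi,i)\in E$ and $A\in\fE$ with $\varphi(A)>0$, so $\bnu(A_{i^*})>0$. First I would apply Lemma \ref{lm2.1} at an integer time $m>T_i$ to land in a bounded ball at state $i^*$, giving $\mathcal{P}^m((\phi,i),\{\|\psi\|<R\}\times\{i^*\})>0$; then, since the time $T$ in Lemma \ref{lm2.4} may be any value exceeding $r$, I would fix an integer $p>r$ to obtain $\mathcal{P}^p((\psi,i^*),\cdot)\ge d_{R,p}\,\bnu\otimes\delta_{i^*}$ uniformly for $\|\psi\|\le R$. A Chapman--Kolmogorov splitting at time $m$ then yields $\mathcal{P}^{m+p}((\phi,i),A)\ge d_{R,p}\,\bnu(A_{i^*})\,\mathcal{P}^m((\phi,i),\{\|\psi\|<R\}\times\{i^*\})>0$, which is exactly $\varphi$-irreducibility.

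Aperiodicity I would deduce from the very same minorization. Choosing $R$ large enough that $\bnu(\{\|\psi\|\le R\})>0$ (possible because $\bnu$ is nontrivial and charges nonempty open sets), the bounded set $C=\{\|\psi\|\le R\}\times\{i^*\}$ satisfies $\mathcal{P}^p((\psi,i^*),C)\ge d_{R,p}\,\bnu(\{\|\psi\|\le R\})>0$ for \emph{every} integer $p>r$. Applying this for two consecutive integers $p$ and $p+1$ shows that $C$ is accessible from itself in coprime numbers of steps, which forces the period to be $1$.

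For petiteness, let $\D\times N$ be bounded with $N$ finite. The goal is a single integer $m$ and a constant $c>0$ with $\mathcal{P}^m((\phi,i),C)\ge c$ for all $(\phi,i)\in\D\times N$, where $C=\{\|\psi\|\le R\}\times\{i^*\}$ is as above; composing this with the Lemma \ref{lm2.4} minorization on $C$ gives $\mathcal{P}^{m+p}((\phi,i),\cdot)\ge c\,d_{R,p}\,\bnu\otimes\delta_{i^*}$ uniformly on $\D\times N$, so that $\D\times N$ is in fact small, hence petite (take $a_{m+p}=1$ and $\nu=c\,d_{R,p}\,\bnu\otimes\delta_{i^*}$ in the definition). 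To build the uniform lower bound I would mimic the proof of Theorem \ref{thm2.1}: Lemma \ref{lm2.6} sends $\D\times N$ into a compact set $\A_\D\times N$ with uniform positive probability, and then Lemma \ref{lm2.1}, together with the Feller property of $(X_t,\alpha(t))$ and a finite-cover argument over the compact $\A_\D\times N$, produces a common positive lower bound for reaching $C$.

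The main obstacle is precisely this last synchronization: Lemmas \ref{lm2.1} and \ref{lm2.6} supply positivity only pointwise and at possibly different, non-integer times, so the crux is to upgrade them to a bound that is uniform in $(\phi,i)$ and attained at one common integer $m$. This is where the Feller continuity, the compactness of $\A_\D$ together with the finiteness of $N$, and the freedom to choose the holding time $T'\ge0$ in \eqref{e5-lm2.1} must be used in concert; the first two let finitely many neighborhoods cover $\A_\D\times N$ with a common transition time, and the last lets me pad all the resulting times up to a single integer $m$.
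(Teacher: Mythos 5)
Your proposal is correct and follows essentially the same route as the paper: both build a uniform minorization $\mathcal{P}^{m}((\phi,i),\cdot)\ge c\,\bnu\otimes\delta_{i^*}$ on $\D\times N$ at an integer time by composing a uniform ``reach a bounded set in state $i^*$'' step (obtained exactly as in the proof of Theorem \ref{thm2.1}, via Lemma \ref{lm2.6}, Lemma \ref{lm2.1}, Feller continuity, compactness, and padding with the free holding time $T'$) with the minorization of Lemma \ref{lm2.4}, which simultaneously yields $\varphi$-irreducibility and smallness (hence petiteness) of bounded sets. Your aperiodicity argument via the small set being minorized at two consecutive integer times is just a repackaging of the paper's contradiction with the cyclic classes, resting on the same fact that the minorization holds for every integer $m>k_0+r$.
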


\begin{proof}
Similar to \eqref{e4-thm2.1},
there are $k_0\in\N, k_0>r$, $\tilde d_{\D,N}>0$ such that
\begin{equation}\label{e1-lm2.7}
\PP_{\phi,i}\{X_t\in \D, \alpha_{k_0}=i^*\}\geq \tilde d_{D,N}\,\text{ for all }\, (\phi,i)\in \D\times N.
\end{equation}
By the Markov property, we deduce from \eqref{e0-lm2.4} and \eqref{e1-lm2.7} that
for any $k>r$, there exists a $\hat d_{\D,N,k}>0$ such that
\begin{equation}\label{e2-lm2.7}
\PP_{\phi,i}\{X_{k+k_0}\in\B\text{ and } \alpha(k+k_0)i^*\}\geq \hat d_{\D,N,k}\bnu(\B),\,\B\in\fB(\C),\,(\phi,i)\in\D\times N.
\end{equation}
Let $\hat\bnu(\cdot)$ be the measure on $\fB(\C\times\N)$
given by $\hat\bnu(\CE)=\bnu(\{\phi\in\C: (\phi,i^*)\in\CE\})$ for $\CE\in\fB(\C\times\N)$.
Then \eqref{e2-lm2.7} can be rewritten as
\begin{equation}\label{e5-lm2.7}
\PP_{\phi,i}\{(X_{k+k_0},\alpha(k+k_0))\in\CE\}\geq \hat d_{D,N,k}\hat\bnu(\CE),\,\CE\in\fB(\C\times\N),\,(\phi,i)\in\D\times N.
\end{equation}
It can  be checked that
\eqref{e5-lm2.7} implies that
the Markov chain $\{(X_k,\alpha(k)): k\in\N\}$ is $\hat\bnu$-irreducible and
every nonempty bounded set in $\C\times\N$ is petite.
Moreover, suppose that $(X_k,\alpha(k))$ is not aperiodic. Then, there are disjoint set $\CE_0,\dots,\CE_{d-1}, d>1$
such that
\begin{equation}\label{e6-lm2.7}
\hat\bnu\left((\C\times\N)\setminus\bigcup_{j=0}^{d-1}\CE_j\right)=0
\end{equation}
and
$$\PP_{\phi,i}\{(X_1,\alpha(1))\in\CE_j\}=1 \text{ if } j=j'+1 \text{ (mod } d)\,\text{ if } (\phi,i)\in\CE_{j'}
,$$
which results in
\begin{equation}\label{e3-lm2.7}
\PP_{\phi,i}((X_{m},\alpha(m))\in\CE_j)=
\left\{
\begin{array}{ll}
1& \text{ where } m=j+1 \text{ (mod } d)\\
0&\text{ otherwise}\\
\end{array}
\right.
\text{ if } (\phi,i)\in\CE_j.
\end{equation}

In view of \eqref{e2-lm2.7}, for any $m>k_0+r$ and $(\phi,i)\in\C\times\N$,
there is a $\tilde p_{\phi,i,k}>0$ such that
\begin{equation}\label{e4-lm2.7}
\PP_{\phi,i}\{(X_{m},\alpha(m))\in\CE\}\geq \tilde p_{\phi,i,k}\hat\bnu(\CE)
\end{equation}
for any measurable set $\CE\in\fB(\C\times\N)$.
As a result of \eqref{e3-lm2.7} and \eqref{e4-lm2.7},
we have that
$\hat\nu(\CE_j)=0$ for any $j=0,\dots,d-1$.
Thus,
\begin{equation}\label{e7-lm2.7}
\hat\nu\left((\C\times\N)\setminus
\bigcup_{j=0}^{d-1}\CE_j\right)=\hat\nu(\C\times\N)>0,
\end{equation}
which contradicts \eqref{e6-lm2.7}.
This contradiction
implies that $(X_k,\alpha(k))$ is aperiodic.
\end{proof}

\begin{thm}\label{thm2.3}
Suppose that either  {\rm\ref{h1}} or {\rm\ref{h2}} holds.
Let $V(\cdot,\cdot)\in \BF$ such that
\begin{equation}\label{v1}
\lim_{n\to\infty}\inf\{V(\phi,i):|\phi(0)|\vee i\geq n\}=\infty.
\end{equation}
Suppose further that there are positive constants $C_1,C_2$ and $H$ such that
\begin{equation}\label{v2}
\LL V(\phi,i)\leq -C_1+C_2\1_{\{V(\phi,i)\geq H\}}.
\end{equation}
Then, $(X_t,\alpha(t))$ is positive recurrent relative to any  set of the form $\D\times N$ where $\D$ is a nonempty open set of $\C$ and $N\ni i^*$
with $i^*$ given in either {\rm (H1)} and {\rm (H2)}. Moreover, there is a unique invariant probability measure $\mu^*$, and for any $(\phi, i)\in\C\times\N$
$$\lim_{t\to\infty}\|P(t,(\phi,i),\cdot)-\mu^*\|_{TV}=0.$$
\end{thm}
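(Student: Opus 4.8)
The plan is to pass to the time-one skeleton chain $\Phi_k=(X_k,\alpha(k))$, $k\in\N$, on the state space $\C\times\N$, convert the continuous-time drift bound \eqref{v2} into a one-step Foster--Lyapunov inequality for its transition kernel $\mathcal{P}$, and then feed this into the standard stability theory for $\psi$-irreducible chains. Lemma \ref{lm2.7} already supplies the three structural ingredients: $\widehat{\bnu}$-irreducibility, aperiodicity, and the fact that every bounded product set $\D\times N$ is petite. What remains is to (a) establish the discrete drift, (b) deduce positive Harris recurrence, a unique invariant law, and total-variation ergodicity for $\Phi_k$, and (c) lift everything back to continuous time and to the recurrence statement relative to $\D\times N$.

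For step (a), I would apply the functional It\^o formula \eqref{f.Ito} on the interval $[0,1\wedge\eta_n]$, with $\eta_n$ the exit time of Lemma \ref{lm2.5}; since $\PP_{\phi,i}\{\eta_n<\infty\}=1$ we have $1\wedge\eta_n\uparrow1$ w.p.1, so that monotone/dominated convergence together with \eqref{v2} yields a bound of the form
\[
\mathcal{P}V(\phi,i)=\E_{\phi,i}V(X_1,\alpha(1))\leq V(\phi,i)-C_1+b\,\1_{S}(\phi,i),
\]
where $b>0$ and $S$ is (an enlargement of) the sublevel set $\{V\leq H\}$. By the coercivity \eqref{v1}, $V(\phi,i)\leq H$ forces $|\phi(0)|\vee i$ to be bounded; upgrading this to a genuine bound on the whole segment requires the $0.25$-H\"older equicontinuity estimate already exploited in \eqref{e4-lm2.5}, after which $S$ lies inside a bounded subset of $\C\times\N$ and is therefore petite by Lemma \ref{lm2.7}.

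For step (b), the displayed inequality is the standard drift condition for positive recurrence (see \cite{EN}) with $C_1>0$ and $S$ petite, so $\Phi_k$ is positive Harris recurrent, admits a unique invariant probability measure $\pi$, and has finite expected return times to $S$ and, by the comparison theorem, to any set containing a petite set. The aperiodicity from Lemma \ref{lm2.7} then promotes this to $\|\mathcal{P}^n((\phi,i),\cdot)-\pi\|_{TV}\to0$ for every $(\phi,i)$. For step (c), positive recurrence relative to $S_0=\{(\phi,i):\|\phi\|\vee i\leq n_0\}$ follows by combining the finite skeleton return time with the bounded increments of $V$ over unit intervals, and the passage to an arbitrary nonempty open $\D$ and finite $N\ni i^*$ is exactly the mechanism of Theorem \ref{thm2.1}. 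Finally I would set $\mu^*(\cdot)=\int_0^1\!\int P(s,(\phi,i),\cdot)\,\pi(d(\phi,i))\,ds$, verify its $P(t,\cdot,\cdot)$-invariance from the semigroup property, Fubini, and $\pi\mathcal{P}=\pi$ (which makes $s\mapsto\pi P(s)$ $1$-periodic), and obtain $\|P(t,(\phi,i),\cdot)-\mu^*\|_{TV}\to0$ by writing $t=n+s$ and combining the skeleton convergence with the total-variation contraction property of the kernels.

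The step I expect to be the main obstacle is (a), namely reconciling the drift bound with the geometry of $\C$. Condition \eqref{v2} is phrased through the scalar $V(\phi,i)$, and \eqref{v1} controls only the endpoint $\phi(0)$ and the index $i$, whereas petiteness and boundedness in $\C$ are governed by $\|\phi\|=\sup_{-r\le s\le0}|\phi(s)|$, which sees the entire past trajectory. Converting endpoint control into segment control forces the path-regularity estimate of \eqref{e4-lm2.5} into the argument, and threading this through the localization by $\eta_n$ while keeping every expectation finite is the delicate point of the proof.
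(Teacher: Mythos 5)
Your overall strategy coincides with the paper's: derive a one-step Foster--Lyapunov drift for the time-one skeleton from \eqref{v2} via the functional It\^o formula, combine it with the irreducibility/aperiodicity/petiteness package of Lemma \ref{lm2.7}, invoke the discrete-time ergodic theorem of \cite{TT}/\cite{MT}, and lift the conclusion to continuous time using monotonicity of the total-variation distance. You also correctly isolate the delicate point. However, your proposed resolution of that point contains a genuine error: you claim that, with the help of the H\"older estimate \eqref{e4-lm2.5}, the sublevel set $S=\{(\phi,i):V(\phi,i)\leq H\}$ ``lies inside a bounded subset of $\C\times\N$.'' It does not. Condition \eqref{v1} constrains only $|\phi(0)|\vee i$ on $S$; since $V$ need not be coercive in $\|\phi\|$, the set $S$ may contain initial segments $\phi$ with $\|\phi\|$ arbitrarily large, and \eqref{e4-lm2.5} concerns the regularity of the solution path for $t\geq 0$, so it says nothing about which initial data belong to $S$. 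No path-regularity argument can shrink $S$ to a bounded set, so petiteness of $S$ cannot be obtained by inclusion in a bounded set.

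The correct mechanism, and the one the paper uses in \eqref{e2.31}--\eqref{e3-thm2.3}, is to show that $S$ leads uniformly to a bounded set in one step, rather than that $S$ is bounded. Choose $n_0$ so that $V(\phi,i)\geq 2(2H+2C_2+C_2r)$ whenever $|\phi(0)|\vee i\geq n_0$ (this uses \eqref{v1} at the endpoint only); the drift bound \eqref{v2} together with It\^o's formula gives $\PP_{\phi,i}\{\sup_{t\leq r}V(X_t,\alpha(t))\geq 2(2H+2C_2+C_2r)\}\leq \tfrac12$ for $(\phi,i)$ in the sublevel set, hence with probability at least $\tfrac12$ one has $|X(t)|\vee\alpha(t)<n_0$ for \emph{every} $t\in[0,r]$, and therefore $\|X_r\|\vee\alpha(r)\leq n_0$: the whole segment is controlled because the endpoint condition is applied at each time in $[0,r]$, not because of H\"older continuity. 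Since bounded sets are petite (Lemma \ref{lm2.7}) and a set from which a petite set is uniformly accessible is itself petite, $S$ is petite and the remainder of your argument goes through. A secondary, smaller point: Theorem \ref{thm2.1} transfers recurrence, not positive recurrence, between target sets; the finite-expectation transfer requires the bound $\E_{\phi,i}\upsilon_H\leq C_1^{-1}V(\phi,i)$ obtained from the It\^o formula and the cycle argument of \cite[Lemma 3.6]{ZY}.
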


\begin{proof}
Let $\upsilon_H=\inf\{t\geq 0: V(X_t,\alpha(t))\leq H\}$.
In view of the functional It\^o formula,
\begin{equation}\label{e10-thm2.3}
\begin{aligned}
\E_{\phi,i} V(X_{1\wedge\upsilon_H},\alpha(1\wedge\upsilon_H))
=&V(\phi,i)+\E_{\phi,i}\int_0^{1\wedge\upsilon_H}\LL V(X_s,\alpha(s))ds\\
\leq& V(\phi,i)-C_1\E_{\phi,i} 1\wedge\upsilon_H\\
\leq& V(\phi,i)-C_1\PP_{\phi,i}\{\upsilon_H\geq1\}.
\end{aligned}
\end{equation}
For any $t\leq 1$ and $V(\phi,i)\leq H$,
we have
\begin{equation}\label{e11-thm2.3}
\begin{aligned}
\E_{\phi,i} V(X_{t},\alpha(t))
=&V(\phi,i)+\E_{\phi,i}\int_0^{t}\LL V(X_s,\alpha(s))ds\\
\leq& V(\phi,i)+C_2t\\
\leq& H+C_2.
\end{aligned}
\end{equation}
It follows from \eqref{e11-thm2.3}
and the strong Markov property of $(X_t,\alpha(t))$
that
\begin{equation}\label{e12-thm2.3}
\begin{aligned}
\E_{\phi,i}\left[\1_{\{\upsilon_H<1\}}V(X_1,\alpha(1))\right]
\leq& (H+C_2)\PP_{\phi,i}\{\upsilon_H<1\}\\
\leq& 2(H+C_2)-(H+C_2)\PP_{\phi,i}\{\upsilon_H<1\}.
\end{aligned}
\end{equation}
Let $\C_V:=\{(\psi', j'): V(\psi,j)\leq 2(H+C_2)\}$.
In view of \eqref{e10-thm2.3} and \eqref{e12-thm2.3},
\begin{equation}\label{e13-thm2.3}
\begin{aligned}
\E_{\phi,i}V(X_1,\alpha(1))\leq&\E_{\phi,i}\left[\1_{\{\upsilon_H<1\}}V(X_1,\alpha(1))\right]
+\E_{\phi,i} V(X_{1\wedge\upsilon_H},\alpha(1\wedge\upsilon_H))\\
\leq&V(\phi,i)-\min\{C_1,H+C_2\}+ 2(H+C_2)\1_{\{(\phi,i)\in\C_V\}}.
\end{aligned}
\end{equation}
Let $n_0\in\N$ such that
\begin{equation}\label{e9-thm2.3}
V(\phi, i)\geq 2(2H+2C_2+C_2r) \,\text{ for any }\,\|\phi\|\vee i\geq n_0,
\end{equation}
and define $\hat\zeta_V=\inf\{t\geq 0: V(X_t,\alpha(t))\geq 2(2H+2C_2+C_2r)\}$.
Similar to \eqref{e4-thm2.2},
we have
\begin{equation}\label{e2.31}
\begin{aligned}
\PP_{\phi,i}\{\hat\zeta_V\leq r\}\leq& \dfrac12\,\text{ for }\, (\phi,i)\in\C_V.
\end{aligned}
\end{equation}
Thus,
\begin{equation}\label{e2-thm2.3}
\begin{aligned}
\PP_{\phi,i}\{ \|X_r\|\vee\alpha(r)\leq n_0\}&\geq \PP_{\phi,i}\{V(X_r,\alpha(r))\geq H+C_2r+1\}\\
&\geq 1-\PP_{\phi,i}\{\hat\zeta_V\leq r\}=\dfrac{1}2,\,(\phi,i)\in \C_H.
\end{aligned}
\end{equation}
In view of \eqref{e5-lm2.7} and \eqref{e2-thm2.3},
\begin{equation}\label{e3-thm2.3}
\PP_{\phi,i}\{(X_{k+k_0},\alpha(k+k_0))\in\CE\}\geq \hat d_{H,k}\hat\bnu(\CE),\,\CE\in\fB(\C\times\N),\text{ if }V(\phi,i)\leq H, k>r
\end{equation}
for some $\hat d_{H,k}>0$.
Thus, the set $\{(\phi,i)\in\C\times\N: V(\phi,i)\leq H\}$ is petite for $\{(X_k,\alpha(k)):k\in\N\}$.
Using this and  \eqref{e3-thm2.3},
it follows from  \cite[Theorem 2.1]{TT} (or \cite{MT})
that
$$\lim_{n\to\infty}\|P(n,(\phi,i),\cdot)-\mu^*\|_{TV}=0$$
where $P(t, (\phi,i),\cdot)$ is the transition probability of $(X_t,\alpha(t))$
and $\mu^*$ is an invariant probability measure of the Markov chain $\{X_k,\alpha(k),k\in\N\}$.
It is easy to show that $\mu^*$ is also an invariant probability measure of the process $\{(X_t,\alpha(t))\}$.
Thus $\|P(t,(\phi,i),\cdot)-\mu^*\|_{TV}$ is decreasing in $t$,
which leads to
$$\lim_{t\to\infty}\|P(t,(\phi,i),\cdot)-\mu^*\|_{TV}=0.$$

Now we show that the process $(X_t,\alpha(t))$ is positive recurrent.
Similar to \eqref{e10-thm2.3},
we deduce from the functional It\^o formula that
$$\E_{\phi,i}\upsilon_H\leq C^{-1}_1V(\phi,i).$$
Owing to this and \eqref{e3-thm2.3}, we can use the arguments in the proof
of \cite[Lemma 3.6]{ZY}
to show that $(X_t,\alpha(t))$ is positive recurrent.
\end{proof}

\begin{exam}\label{ex2}{\rm
In Example \ref{ex1},
if we assume further that
\begin{equation}\label{e1-ex2}
\lim_{|x|\to\infty}\inf_{i\in\N}\{|x|b(x,i)\}>0
\end{equation}
then it follows from \eqref{e2-ex1} and \eqref{e3-ex1} that
$$\LL V(\phi,i)\leq -\hat C\,\text{ for } \phi\in\C, i\geq 2,$$
for some positive constant $\hat C$.
This combined with \eqref{e5-ex1}
shows that \eqref{v2} holds
for the switching diffusion $(X(t),\alpha(t))$ and the function $V(\phi,i)$ in Example \ref{ex1}.
Thus the conclusion of Theorem \ref{thm2.3} holds
for the switching diffusion in Example \ref{ex1}
with the additional condition \eqref{e1-ex2}.
}\end{exam}

\begin{thm}
Suppose that either  {\rm\ref{h1}} or {\rm\ref{h2}} holds.
Let $V(\cdot,\cdot)\in \BF$ such that
\begin{equation}\label{v3}
\lim_{n\to\infty}\inf\{V(\phi,i):|\phi(0)|\vee i\geq n\}=\infty.
\end{equation}
Suppose further that there are $C_1$ and $C_2>0$ such that
\begin{equation}\label{v4}
\LL V(\phi,i)\leq -C_1V(\phi,i)+C_2.
\end{equation}
Then, there is a unique invariant probability measure $\mu^*$ and $\theta>0$ such that for any $(\phi, i)\in\C\times\N$
\begin{equation}\label{e0-thm2.4}
\lim_{t\to\infty}\exp(\theta t)\|P(t,(\phi,i),\cdot)-\mu^*\|_{TV}=0.
\end{equation}
\end{thm}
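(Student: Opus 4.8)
The plan is to read \eqref{v4} as a geometric Foster--Lyapunov drift condition and to combine it with the $\hat\bnu$-irreducibility, aperiodicity, and petiteness already assembled in Lemma \ref{lm2.7} and in the proof of Theorem \ref{thm2.3}. The whole argument runs parallel to Theorem \ref{thm2.3}; the only structural change is that the constant negative drift $-C_1$ is replaced by a drift $-C_1 V$ proportional to $V$, and this is precisely what upgrades plain total-variation convergence to an exponential rate.

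First I would establish an exponential moment estimate. Applying the functional It\^o formula to $e^{C_1 t}V(X_t,\alpha(t))$ and using \eqref{v4} in the form $C_1 V(\phi,i)+\LL V(\phi,i)\leq C_2$, together with the localizing stopping times $\eta_k$ of Lemma \ref{lm2.5}, gives
$$\E_{\phi,i}\Big[e^{C_1(t\wedge\eta_k)}V(X_{t\wedge\eta_k},\alpha(t\wedge\eta_k))\Big]\leq V(\phi,i)+\frac{C_2}{C_1}\big(e^{C_1 t}-1\big).$$
Since $\PP_{\phi,i}\{\eta_k<\infty\}=1$ and $\eta_k\uparrow\infty$ by Lemma \ref{lm2.5}, letting $k\to\infty$ and using Fatou's lemma yields
$$\E_{\phi,i}V(X_t,\alpha(t))\leq e^{-C_1 t}V(\phi,i)+\frac{C_2}{C_1},\qquad t\geq 0.$$
Evaluating at $t=1$ produces the one-step geometric drift inequality $\E_{\phi,i}V(X_1,\alpha(1))\leq\lambda V(\phi,i)+b$ for the skeleton chain $\{(X_k,\alpha(k)):k\in\N\}$, with $\lambda=e^{-C_1}<1$ and $b=C_2/C_1$.

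Next I would rewrite \eqref{v4} as $\LL V(\phi,i)\leq-\tfrac{C_1}{2}V(\phi,i)+C_2\1_{\{V(\phi,i)\leq 2C_2/C_1\}}$ and observe, exactly as in the passage from \eqref{e2-thm2.3} to \eqref{e3-thm2.3}, that each sublevel set $\{(\phi,i):V(\phi,i)\leq R\}$ is petite: although such a set need not be bounded in $\C\times\N$ (the coercivity \eqref{v3} controls only $|\phi(0)|\vee i$, not $\|\phi\|$), the estimate analogous to \eqref{e2-thm2.3} shows that, starting from it, after the delay time $r$ the process lands in a genuinely bounded---hence petite---subset with probability at least $1/2$, and petiteness is inherited through the minorization \eqref{e5-lm2.7}. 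Combined with the irreducibility and aperiodicity of Lemma \ref{lm2.7}, the geometric drift inequality and the petiteness of sublevel sets place the skeleton chain within the scope of the standard discrete-time geometric ergodicity theorem (see \cite{MT}), which yields a $V$-geometric rate: there exist $\rho\in(0,1)$ and $R<\infty$ with
$$\|P^n((\phi,i),\cdot)-\mu^*\|_{TV}\leq R\,V(\phi,i)\,\rho^n,\qquad(\phi,i)\in\C\times\N,$$
for the unique invariant probability measure $\mu^*$, which (as in Theorem \ref{thm2.3}) is also invariant for the continuous-time process.

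Finally I would lift the geometric rate from integer times to all $t$. Writing $t=n+s$ with $n=\lfloor t\rfloor$ and $s\in[0,1)$ and using the semigroup decomposition $P(t,(\phi,i),\cdot)=\int P(s,(\phi,i),d(\psi,j))P^n((\psi,j),\cdot)$ together with $\mu^*P^n=\mu^*$, the convexity of the total-variation norm and the uniform moment bound $\E_{\phi,i}V(X_s,\alpha(s))\leq V(\phi,i)+C_2/C_1$ give $\|P(t,(\phi,i),\cdot)-\mu^*\|_{TV}\leq R'\,(1+V(\phi,i))\,e^{-(-\log\rho)t}$, whence \eqref{e0-thm2.4} holds for any $0<\theta<-\log\rho$. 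The main obstacle is the petiteness of the sublevel sets of $V$: because $V$ is path-dependent and \eqref{v3} is coercive only in the terminal data $|\phi(0)|\vee i$, these sets are not compact in $\C$, so Lemma \ref{lm2.7} does not apply directly and one must reuse the ``return to a bounded set after one delay period $r$'' mechanism of Theorem \ref{thm2.3}; keeping the exponential moment estimate intact through the $\eta_k$-localization is the secondary point demanding care.
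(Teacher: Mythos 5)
Your proposal is correct and follows essentially the same route as the paper: the exponential moment estimate via the functional It\^o formula applied to $e^{C_1 t}V$ with the $\eta_k$-localization, the resulting one-step geometric drift inequality for the skeleton chain, petiteness of sublevel sets of $V$ obtained by the same ``return to a bounded set after one delay period'' mechanism as in Theorem \ref{thm2.3}, and the discrete-time geometric ergodicity theorem of Meyn--Tweedie. The only cosmetic difference is in passing from integer times to all $t$: the paper invokes the monotone decrease of $\|P(t,(\phi,i),\cdot)-\mu^*\|_{TV}$ in $t$, while you interpolate via the semigroup decomposition and a uniform moment bound; both are valid.
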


\begin{proof}
\begin{equation}\label{e1-thm2.4}
\begin{aligned}
\E_{\phi,i} &\exp\{C_1(\eta_k\wedge t)\}V(X_{\eta_k\wedge t},\alpha(\eta_k\wedge t))\\
=&V(\phi,i)+\E_{\phi,i}\int_0^{\eta_k\wedge t}e^{C_1s}[\LL V(X_s,\alpha(s))+C_1 V(X_s,\alpha(s))]ds\\
\leq& V(\phi,i)+C_2\E_{\phi,i} \int_0^{\eta_k\wedge t}e^{C_1s}ds\\
\leq& V(\phi,i)+C_1^{-1}C_2e^{C_1t}.
\end{aligned}
\end{equation}
Letting $k\to\infty$, we obtain
\begin{equation}\label{e2-thm2.4}
\begin{aligned}
\E_{\phi,i} V(X_t,\alpha(t))
\leq& e^{-C_1t}V(\phi,i)+C_1^{-1}C_2
\end{aligned}
\end{equation}
Let $\gamma_1=e^{-C_1}$ and
$\gamma_2\in(\gamma_1,1)$.
It follows from \eqref{e2-thm2.4} and \eqref{e12-thm2.3} that
\begin{equation}\label{e3-thm2.4}
\begin{aligned}
\E V(X_1,\alpha(1))
\leq& \gamma_1V(\phi,i)+C_1^{-1}C_2\\
=& \gamma_2V(\phi,i)+\Big[C_1^{-1}C_2-(\gamma_2-\gamma_1)V(\phi,i)\Big]\\
\leq& \gamma_2V(\phi,i)+[C_1^{-1}C_2]\1_{\{V(\phi,i)\leq H'\}}
\end{aligned}
\end{equation}
where $H'=C_1^{-1}C_2(\gamma_2-\gamma_1)^{-1}$.
Similar to \eqref{e3-thm2.3},
the set $\{(\phi,i)\in\C\times\N: V(\phi,i)\leq H'\}$ is petite,
which combined with \eqref{e3-thm2.4}
implies the existence of $\gamma_3\in(0,1)$ such that
$$\lim_{n\to\infty}\gamma_3^n\|P(n,(\phi,i),\cdot)-\mu^*\|_{TV}=0$$
due to a well-known theorem (see, e.g., \cite{MT}).
Then \eqref{e0-thm2.4} follows from \eqref{e3-thm2.4}
and the decreasing property of  $\|P(t,(\phi,i),\cdot)-\mu^*\|_{TV}$ in $t$.
\end{proof}

\begin{exam}\label{ex3}{\rm
Suppose that
$$
\begin{aligned}
&q_{12}(\phi)=1, q_{1j}(\phi)=0\,\text{ for }j\geq 3;\\
&q_{i,1}(\phi)=2\int_{-r}^0|\phi(s)|ds,q_{i,i+1}(\phi)=i\int_{-r}^0|\phi(s)|ds\,\text{ for }i\geq 2\\
&q_{ij}(\phi)=0\,\text{ for } i\geq 2, j\notin\{1,i,i+1\}.\\
\end{aligned}
$$
and that the equation for the diffusion part is
$$dX(t)=\sigma(X(t),\alpha(t))dW(t)-b(X(t),\alpha(t))X(t)dt$$
where $\sigma(x,i),b(x,i)$ are locally Lipchitz in $x$ and uniformly bounded
in $K\times\N$ for each compact set $K\in\R$.
Let $V(\phi,i)$ be defined as in Example \eqref{ex1}.
Similar to Examples \ref{ex1} and \ref{ex2}, under the assumption that  $b:=\inf_{(x,i)\in\R\times\N} \{b(x,i)\}>0.$
one can show that \eqref{v4} holds
in this example with this function $V$.
Thus, the conclusion of Theorem \ref{thm2.2} holds for this example.
if there exists $i^*\in\N$ such that $\sigma(x,i^*)\ne 0$ for any $x\in\R$.
}\end{exam}

\begin{exam}\label{ex4}{\rm
Let
$$
\begin{aligned}
&q_{12}(\phi)=1, q_{1j}(\phi)=0\,\text{ for }j\geq 3;\\
&q_{i,i-1}(\phi)=C_i+2|\phi(0)|, q_{i,i+1}=C_i+|\phi(-r)|\,\text{ for }i\geq 2,\,C_i\geq0;\\
&q_{ij}(\phi)=0\,\text{ for } i\geq 2, j\notin\{i-1,i,i+1\}.\\
\end{aligned}
$$
Consider the general equation for diffusion \eqref{eq:sde}, where $\sigma(x,i),b(x,i)$ are locally Lipchitz in $x$
in $K\times\N$ for each compact set $K\in\R$.
Suppose there is a function $U(x):\R^n\mapsto\R_+$
satisfying
\begin{itemize}
\item $U(x)$ is twice continuously differentiable in $x$.
\item  $\lim_{|x|\to\infty}U(x,i)=\infty$.
\item There are positive constants $C_1, C_2,H$ such that
\begin{equation}\label{e2-ex4}
\LL_i U(x)\leq -C_1U(x)+C_2.
\end{equation}
\end{itemize}
Let $V(x,i)=U(x)+i+\int_0^t\exp\{\frac{\ln 2}r(s+r)\}ds.$
By Remark \ref{rmk2.1},
\begin{equation}\label{e3-ex4}
\LL V(\phi,i)=
\begin{cases}
\LL_i U(x)-i-\disp\frac{\ln 2}r\int_0^t\exp\left\{\frac{\ln 2}r(s+r)\right\}ds+2&\text{ if } i>1\\
\LL_i U(x)-\disp\frac{\ln 2}r\int_0^t\exp\left\{\frac{\ln 2}r(s+r)\right\}ds+1&\text{ if } i=1
\end{cases}
\end{equation}
As a consequence of \eqref{e2-ex4} and \eqref{e3-ex4},
there are $C_3$ and $C_4>0$ such that
$$\LL V(\phi,i)\leq -C_3 V(\phi,i)+C_4\,\text{ for }\, (\phi,i)\in\C\times\N.$$
Thus, the conclusion of Theorem \ref{thm2.2} holds for this example
if there exists $i^*\in\N$ such that $A(x,i^*)$ is elliptic.
}\end{exam}

\section{Recurrence of Past-Independent Switching Diffusions}\label{sec:4}

This section is devoted mainly to characterizing the  recurrence of $(X(t),\alpha(t))$
using the corresponding system of partial differential equations
when the switching intensities of $\alpha(t)$ depends only on the current state of $X(t)$,
that is $q_{ij}(\cdot), i, j\in\N$ are functions on $\R^n$ rather than on $\C$.
To simplify the presentation, throughout this section, we set $q_{ii}(x)=0$ for $(x,i)\in\R^n\times\N$.
Thus, $q_{i}(x)=-\sum_{j\in\N}q_{ij}(x)$.
In this section, we use the following assumption.

\begin{asp}\label{asp3.1}{\rm
Suppose that
\begin{enumerate}
  \item either Assumption \ref{asp2.3} or Assumption \ref{asp2.4} holds with $\phi\in\C$ replaced by $x\in\R^n$;
  \item for each $i\in\N$, $A(x,i)$ 
  is uniformly elliptic in each compact set;
  \item for any $x\in\R^n$, there are $\hat q=\hat q(x)>0$ and $n_{\hat q}=n_{\hat q}(x)>0$ such that
\begin{equation}\label{e1-asp3.1}
\sum_{j\leq n_{\hat p}} q_{ij}(x)\geq {\hat q}\ \text{ for any }\ i> n_{\hat q}.
\end{equation}
\end{enumerate}
}\end{asp}

\begin{rmk}{\rm We note the following facts.
\begin{itemize}
\item Part 3 of Assumption \ref{asp3.1} stems from a familiar condition
for uniform ergodicity of the Markov chain having a countable state space.
In other word,
if \eqref{e1-asp3.1} holds, for each $x\in\R^n$,
the Markov chain $\hat\alpha^x(t)$ with generator $Q(x)$ has a property that
$$\sup_{i\in\N} \E_i \varsigma<\infty$$
where $\varsigma$ is the first time the process  $\hat\alpha^x(t)$ jumps to
$\{1,\dots,n_0\}$ and $\E_i$ is the expectation with condition $\hat\alpha^x(0)=i$.
\item Since $q_{ij}(x)$ is continuous in $x\in\R^n$,
with the use of the Heine-Borel covering
theorem, it is easy to show that for any bounded set $D\in\R^n$,
there is $\eps_0=\eps_0(D)$ and $n_0=n_0(D)$ such that
\begin{equation}\label{e2-asp3.1}
\sum_{j\leq n_0} q_{ij}(x)\geq \eps_0\ \text{ for any }\ i> n_0, x\in D.
\end{equation}
\end{itemize} }
\end{rmk}

For an open set $D\subset\R^n$, define
$$\widetilde\tau_D=\inf\{t\geq 0: X(t)\notin D\},$$
and $W^{2,p}_{loc}(D)$
is the set of functions $u:\bar D\mapsto\R$
that has generalized derivatives
$D^\beta u$ for any multiple-index $\beta=(\beta_1,\dots,\beta_n)$ with $|\beta|=\sum \beta_i\leq2 $
satisfying $D^\beta u\in L^p_{loc}(D)$ if $|\beta|\leq 2$.
Let $\HB^p(D)$ be the set of functions $u(x,i)$ in $\bar\D\times\N$
satisfying that
\begin{itemize}
  \item For each $i\in\N$, $u(\cdot,i)\in W^{2,p}_{loc}(D)$ and $u(\cdot,i)$ is continuous in the closure $\bar D$ of $D$.
  \item For any compact set $K\subset \R^n$, $\sup_{(x,i)\in (K\cap\bar D)\times\N}\{u(x,i)\}<\infty$.
\end{itemize}
Let $\LL_i$ be defined as in \eqref{e.L_i}.
We state the two main results of this section.
\begin{thm}\label{thm3.2}
Suppose that Assumption {\rm\ref{asp3.1}} holds.
Let $D_1$ be a bounded open set of $\R^n$ with $\partial D_1\in C^2$ and $D={\bar D_1}^c$ be the complement of $\bar D_1$.
Let $p>n$.
The process $X(t)$ is recurrent relative to $D_1$,  if and only if
the Dirichlet problem
\begin{equation}\label{e1-thm3.2}
\begin{cases}
&\LL_i u(x,i)-q_i(x)u(x,i)+\sum_{j\in\N} q_{ij}(x)u(x,j)=0\text{ in } D\times\N\\
&u(x,i)=f(x,i) \text{ on }\partial D\times\N.
\end{cases}
\end{equation}
has a unique solution in $\HB^p(D)$
given that $f(x,i)$ is continuous in $x\in\partial D$ and
bounded in $\partial\D\times\N$.
\end{thm}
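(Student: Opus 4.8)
The plan is to identify the differential operator in \eqref{e1-thm3.2} with the generator $\LL$ of the past-independent process $(X(t),\alpha(t))$ and then to solve the exterior Dirichlet problem by a probabilistic representation. Since $q_{ii}(x)=0$ and $q_i(x)=\sum_{j\ne i}q_{ij}(x)$, for $u(\cdot,i)\in C^2$ one has $\LL u(x,i)=\LL_i u(x,i)-q_i(x)u(x,i)+\sum_{j\in\N}q_{ij}(x)u(x,j)$, so \eqref{e1-thm3.2} is exactly $\LL u=0$ in $D$ with $u=f$ on $\partial D$; in particular the constant function $u\equiv 1$ solves the homogeneous equation. Because $A(\cdot,i)$ is uniformly elliptic on compact sets and $\partial D_1=\partial D\in C^2$, the continuous process started in $D$ enters the open set $D_1$ if and only if it exits $D$, so \emph{recurrence of $X(t)$ relative to $D_1$ is equivalent to $\PP_{x,i}\{\widetilde\tau_D<\infty\}=1$ for every $(x,i)\in D\times\N$}. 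I will prove the two implications using this reformulation.

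For the direction ``recurrent $\Rightarrow$ unique solvability'', assume $\widetilde\tau_D<\infty$ a.s. and set
$$u(x,i)=\E_{x,i}\big[f(X(\widetilde\tau_D),\alpha(\widetilde\tau_D))\big],\qquad (x,i)\in \overline D\times\N,$$
which is well defined and bounded by $\sup_{\partial D\times\N}|f|$. To obtain regularity I would freeze the discrete component: for each fixed $i$ the function $u(\cdot,i)$ should solve the scalar uniformly elliptic equation $\LL_i u(\cdot,i)-q_i(\cdot)u(\cdot,i)=-g_i$ on $D$, where $g_i(x)=\sum_{j\ne i}q_{ij}(x)u(x,j)$ is treated as a known inhomogeneity. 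The term $g_i$ is bounded on each compact set (by $\sup_{\partial D\times\N}|f|$ times the local bound on $q_i$ furnished by the assumptions) and, using part~3 of Assumption~\ref{asp3.1} to control the tail of the series in $j$ uniformly on compact sets, continuous in $x$. Interior $L^p$ elliptic estimates then give $u(\cdot,i)\in W^{2,p}_{loc}(D)$, and since $p>n$ the Sobolev embedding $W^{2,p}\hookrightarrow C^{1}$ makes $u(\cdot,i)$ continuously differentiable, so the It\^o formula \eqref{f.Ito} applies to $u$. The $C^2$ regularity of $\partial D$ provides exterior barriers, whence $u$ attains the data $f$ continuously; combined with the uniform-on-compacts bound this yields $u\in\HB^p(D)$ and shows that $u$ solves \eqref{e1-thm3.2}.

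Uniqueness and the converse both reduce to comparing solutions with the hitting mechanism. If $v\in\HB^p(D)$ is another solution with the same boundary data, then $w=u-v$ satisfies $\LL w=0$ in $D$, $w=0$ on $\partial D$. Applying It\^o's formula to $w$ along $\widetilde\tau_D\wedge\eta_k\wedge t$, with $\eta_k=\inf\{t\ge 0:|X(t)|\vee\alpha(t)>k\}$ the localizing sequence of Lemma~\ref{lm2.5}, gives $w(x,i)=\E_{x,i}\,w\big(X(\widetilde\tau_D\wedge\eta_k\wedge t),\alpha(\widetilde\tau_D\wedge\eta_k\wedge t)\big)$; letting $t\to\infty$ and then $k\to\infty$, the contribution of $\{\eta_k<\widetilde\tau_D\}$ vanishes because $\PP_{x,i}\{\widetilde\tau_D<\infty\}=1$ and the values of $w$ are controlled by the $\HB^p$ bounds together with the tightness of $\alpha$ from Assumption~\ref{asp3.1}(3), while $w=0$ on $\partial D$ kills the remaining term, so $w\equiv 0$. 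For the converse I argue by contraposition: if $X(t)$ is \emph{not} recurrent relative to $D_1$ then $u_2(x,i):=\PP_{x,i}\{\widetilde\tau_D<\infty\}<1$ at some $(x_0,i_0)$. Taking $f\equiv 1$, both the constant $\mathbf 1$ (as $\LL\mathbf 1=0$) and $u_2$ (by the same representation-and-regularity argument, now with terminal functional $\1_{\{\widetilde\tau_D<\infty\}}$) are $\HB^p(D)$ solutions with boundary value $1$, yet $u_2(x_0,i_0)<1$; these two distinct solutions contradict uniqueness, so unique solvability forces recurrence.

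The genuinely delicate point is the regularity theory for the \emph{infinite coupled system}: defining, bounding, and establishing enough continuity (indeed H\"older control) of the coupling term $g_i(x)=\sum_{j\ne i}q_{ij}(x)u(x,j)$ uniformly on compact sets, which is precisely where Assumption~\ref{asp3.1}(3), the uniform return condition on the embedded chain, is indispensable, and the accompanying limit $k\to\infty$ in the uniqueness step, where one must rule out mass escaping to $i=\infty$ or $|x|=\infty$ before $\widetilde\tau_D$. By comparison, the boundary attainment near the $C^2$ surface $\partial D$ and the verification that the probabilistic $u$ lies in $\HB^p(D)$ are routine once the coupling term is under control.
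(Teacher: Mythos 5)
Your global strategy coincides with the paper's: identify the system with $\LL u=0$, observe that $u\equiv 1$ is a solution, compare it with $v(x,i)=\PP_{x,i}\{\widetilde\tau_D<\infty\}$ to get ``unique solvability $\Rightarrow$ recurrence'', and prove uniqueness under recurrence by It\^o's formula along $\widetilde\tau_{D_k}$ and letting $k\to\infty$. The uniqueness half of your argument is essentially the paper's \eqref{e5-lm3.5}-based computation and is fine modulo the boundedness caveats you already flag.

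The genuine gap is in the regularity step, and it is not a technicality you can defer. You propose to show that the probabilistically defined $u(x,i)=\E_{x,i}f(X(\widetilde\tau_D),\alpha(\widetilde\tau_D))$ (and likewise $u_2(x,i)=\PP_{x,i}\{\widetilde\tau_D<\infty\}$) lies in $W^{2,p}_{loc}$ by freezing $i$ and treating $g_i(x)=\sum_j q_{ij}(x)u(x,j)$ as a known continuous inhomogeneity in the scalar equation $\LL_i u-q_iu=-g_i$. This is circular: to invoke interior $L^p$ estimates for that scalar equation you must already know that $u(\cdot,i)$ is a (weak) solution of it with a continuous right-hand side, and the continuity of $u(\cdot,j)$ for every $j$ --- uniformly enough in $j$ to sum the series --- is exactly what is in question; a priori $u$ is only a bounded measurable function. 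The paper closes this loop with two pieces of machinery you do not reconstruct: Lemma~\ref{lm3.1}, which produces the solution on \emph{bounded} domains $D_k$ by an iteration $u_m\to u$ whose uniform convergence rests on the contraction estimate $\sup_i\Delta^i_{m+2}\le [p+(1-p)\eps_1]\sup_i\Delta^i_m$ derived from Assumption~\ref{asp3.1}(3); and Lemma~\ref{lm3.5}, a Harnack-type principle for the infinite system obtained by truncating the switching space at level $k_1$ (controlling $\PP_{x,i}\{\widetilde\tau_D>\xi_{k_1}\}<\eps$ via Lemmas~\ref{lm3.3}--\ref{lm3.4}) and applying the Chen--Zhao Harnack principle for finite weakly coupled systems. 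It is this Harnack estimate that upgrades the pointwise convergence $v_k(x,i)\to v(x,i)=\PP_{x,i}\{\widetilde\tau_D<\infty\}$ to locally uniform convergence, whence $v\in\HB^p(D)$ solves \eqref{e1-thm3.2} with $f\equiv 1$ and the comparison with $\mathbf 1$ can be run. You correctly identify this as ``the genuinely delicate point,'' but identifying it is not the same as supplying it; without the bounded-domain contraction and the truncated Harnack principle, neither the existence of a solution under recurrence nor the membership $u_2\in\HB^p(D)$ in your contrapositive argument is established.

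One smaller remark: in your uniqueness step the bound on the escaping term uses $\sup_{(x,i)\in D\times\N}|w(x,i)|$, whereas membership in $\HB^p(D)$ only gives uniform bounds on $(K\cap\bar D)\times\N$ for compact $K$; since $D$ is unbounded you should either restrict to bounded solutions (as the boundedness of $f$ suggests and as the paper implicitly does) or justify why the local bounds suffice on $\{\eta_k<\widetilde\tau_D\}$, where the process sits at $|X|=k$ or $\alpha=k$.
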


\begin{thm}\label{thm3.1}
Suppose that Assumption {\rm\ref{asp3.1}} holds.
Let $D_1$ be a bounded open set of $\R^n$ with boundary $\partial D_1\in C^2$ and $D=D_1^c$ be its complement.
Let $p>n$.
Suppose further that for each compact set $K\in\R^n$, the function $u(x,i)=\E_{x,i} \widetilde\tau_D$ is bounded  in $K\times\N$.
Then $\{u(x,i)\}\in \HB^p(D), p>n$ is a strong solution to
\begin{equation}
\begin{cases}
&\LL_i u(x,i)-q_i(x)u(x,i)+\sum_{j\in\N} q_{ij}(x)u(x,j)=-1\text{ in } D\times\N\\
&u(x,i)=0 \text{ on }\partial D\times\N.
\end{cases}
\end{equation}
The solution is unique in $\HB^p(D), p>n$.
\end{thm}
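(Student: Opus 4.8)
The plan is to recognize the left-hand side of the system as the (extended) generator $\mathcal{G}$ of the joint Markov process $(X(t),\alpha(t))$ acting on functions of $(x,i)$ — indeed, with $q_{ii}\equiv 0$ one has $\mathcal{G}u(x,i)=\LL_i u(x,i)-q_i(x)u(x,i)+\sum_{j\ne i}q_{ij}(x)u(x,j)$ — so that $u(x,i)=\E_{x,i}\widetilde\tau_D$ is its natural Dynkin potential, and the claim is a Feynman--Kac-type characterization. The boundary condition is immediate: for $x\in\partial D$ we have $\widetilde\tau_D=0$, hence $u(x,i)=0$; continuity of $u(\cdot,i)$ up to $\partial D$ follows from $\partial D_1\in C^2$ together with the uniform ellipticity in Assumption~\ref{asp3.1}(2), which furnishes an exterior-sphere barrier making every boundary point regular for the $\LL_i$-diffusion. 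The real content lies in the interior regularity, the verification of the equation, and uniqueness.

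For the interior, I would fix $i\in\N$ and a ball $B=B(x_0,\rho)\subset\subset D$, let $Y$ be the diffusion generated by $\LL_i$ started at $x$, and let $\tau_B$ be its exit time from $B$. Using the strong Markov property and the additivity $\widetilde\tau_D=\theta+\widetilde\tau_D\circ\vartheta_\theta$ for stopping times $\theta\le\widetilde\tau_D$, and decomposing the trajectory according to the first switch of $\alpha$ — before which the pair evolves as $(Y,i)$ killed at rate $q_i(Y(\cdot))$, with the index jumping from $i$ to $j$ at rate $q_{ij}$ — I would derive the Feynman--Kac identity
$$u(x,i)=\E_x\!\left[\int_0^{\tau_B}e^{-\int_0^s q_i(Y_u)\,du}F_i(Y_s)\,ds+e^{-\int_0^{\tau_B}q_i(Y_u)\,du}\,u(Y_{\tau_B},i)\right],\quad F_i:=1+\sum_{j\ne i}q_{ij}u(\cdot,j).$$
Because $q_i$ is bounded on the compact $\bar B$ (Assumption~\ref{asp3.1}(1)) and $u$ is bounded on $\bar B$ by hypothesis, the series defining $F_i$ converges and $0\le F_i\le 1+\big(\sup_{\bar B}q_i\big)\sup_{\bar B,\,j}u(\cdot,j)<\infty$, so $F_i\in L^\infty(B)\subset L^p(B)$.

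The identity above is precisely the stochastic representation of the unique $W^{2,p}(B)\cap C(\bar B)$ solution of the scalar Dirichlet problem $\LL_i v-q_i v=-F_i$ in $B$, $v=u(\cdot,i)$ on $\partial B$, whose existence is guaranteed by classical $L^p$ theory for uniformly elliptic operators with bounded coefficients and $p>n$. Matching the two shows $u(\cdot,i)\in W^{2,p}(B)$ and that $\LL_i u(\cdot,i)-q_i u(\cdot,i)+\sum_{j\ne i}q_{ij}u(\cdot,j)=-1$ a.e.\ in $B$; since $B$ and $i$ are arbitrary, $u\in\HB^p(D)$ solves the system. (The apparent circularity — that the boundary data $u(\cdot,i)|_{\partial B}$ must be continuous before invoking solvability — is removed by first establishing interior continuity of $u(\cdot,i)$ directly from the representation, using the strong Feller property of $Y$ and dominated convergence, and then letting $\rho$ vary.)

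For uniqueness, let $w\in\HB^p(D)$ be another solution and set $e:=w-u$, which solves the homogeneous system $\mathcal{G}e=0$ in $D$ with $e|_{\partial D}=0$. Applying It\^o's formula in its Krylov form for $W^{2,p}_{loc}$ functions ($p>n$) to $e(X(t),\alpha(t))$ makes it a local martingale up to the exit time, so $e(x,i)=\E_{x,i}\,e\big(X(\theta),\alpha(\theta)\big)$ with $\theta=\widetilde\tau_D\wedge\eta_k\wedge t$ and $\eta_k$ as in Lemma~\ref{lm2.5}. Letting $t\to\infty$ and $k\to\infty$, and using $\widetilde\tau_D<\infty$ a.s.\ (finite since $\E_{x,i}\widetilde\tau_D=u(x,i)<\infty$), the vanishing of $e$ on $\partial D$, and the boundedness of $w,u$ on compacts, dominated convergence yields $e\equiv0$. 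I expect the main obstacle to be the countable index space, in two places: ensuring $\sum_{j\ne i}q_{ij}u(\cdot,j)$ is a well-defined $L^p$ source, and — more delicately — controlling the escape of $\alpha(t)$ to infinite index when passing to the limit in the uniqueness step. Here Assumption~\ref{asp3.1}(3) and Lemma~\ref{lm2.5} (giving $\eta_k\to\infty$ and ruling out instantaneous explosion of the index) are the essential tools, while the hypothesis that $u=\E_{x,i}\widetilde\tau_D$ is bounded on compacts is exactly what supplies both the $L^p$-bound on $F_i$ and the integrability needed to justify the limits.
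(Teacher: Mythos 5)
Your existence argument is correct but follows a genuinely different route from the paper's. The paper exhausts the unbounded domain by $D_k=D\cap\{|x|<k\}$, applies the weak Harnack principle (Lemma \ref{lm3.5}) to the differences $u_k-u_m$ of the truncated expected exit times to get locally uniform convergence, and then identifies the limit by solving the full countable coupled system on an interior ball via Lemma \ref{lm3.1}. You instead work directly with $u=\E_{x,i}\widetilde\tau_D$, localize to a ball, and use the first-switch decomposition (in effect Lemma \ref{lm3.2}) to recast $u(\cdot,i)$ as the solution of a \emph{scalar} problem $\LL_i v-q_i v=-F_i$ with the coupling absorbed into the bounded source $F_i=1+\sum_{j\ne i}q_{ij}u(\cdot,j)$, then invoke scalar $W^{2,p}$ theory and Krylov's It\^o formula. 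This avoids both the Harnack machinery and the solvability lemma for the infinite system; the price is that you must establish interior continuity of $u(\cdot,i)$ by hand from the representation (your sketch via the regularizing Green and harmonic-measure operators of the killed diffusion is workable), whereas the paper gets continuity for free from uniform convergence. In both arguments the hypothesis $\sup_{K\times\N}u<\infty$ is the essential input: it bounds $F_i$ for you and bounds the Harnack error term for the paper.

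There is, however, a genuine gap in your uniqueness step. You set $e=w-u$, obtain $e(x,i)=\E_{x,i}\,e(X(\theta),\alpha(\theta))$ with $\theta=\widetilde\tau_D\wedge\eta_k\wedge t$, and claim dominated convergence from ``boundedness of $w,u$ on compacts.'' But $D$ is the complement of a bounded set, and membership in $\HB^p(D)$ controls $e$ only on $(K\cap\bar D)\times\N$ for compact $K$; on the event $\{\eta_k<\widetilde\tau_D\wedge t\}$ the term $e(X(\eta_k),\alpha(\eta_k))$ is evaluated where $|X(\eta_k)|=k$ (or $\alpha(\eta_k)=k$) and is bounded only by $M_k:=\sup_{|x|\le k,\,i\in\N}|e(x,i)|$, which may grow with $k$. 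There is no integrable dominating random variable, and what is actually needed is $M_k\,\PP_{x,i}\{\eta_k<\widetilde\tau_D\}\to0$, which does not follow from the stated hypotheses (indeed $u=\E_{x,i}\widetilde\tau_D$ itself typically grows as $|x|\to\infty$). The paper's own uniqueness discussion (the Theorem \ref{thm3.2}-style argument with $M^{(3)}=\sup_{D\times\N}|v^{(3)}|$) quietly assumes a global bound on the difference, so you are in good company, but as written your limit passage does not close. To repair it you would need either to restrict the uniqueness class to solutions whose difference from $u$ is globally bounded, or to supply a growth estimate on solutions in $\HB^p(D)$ together with a decay rate for $\PP_{x,i}\{\eta_k<\widetilde\tau_D\}$.
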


\begin{lm}\label{lm3.1}
Let  $D\subset\R^n$ be an  open bounded set with $\partial D\in C^2$.
The Dirichlet problem
\begin{equation}\label{e1-lm3.1}
\begin{cases}
&\LL_i u(x,i)-q_i(x)u(x,i)+\sum_{j\in\N} q_{ij}(x)u(x,j)=f(x,i)\text{ in } D\times\N\\
&u(x,i)\big|_{\partial D}=\phi(x,i) \text{ on }\partial D\times\N.
\end{cases}
\end{equation}
has a unique strong solution $\{u(x,i)\}\in\HB^p(D), p>n$
if $\phi(x,i)$ and $f(x,i)$ are continuous and bounded on $\partial D\times\N$ and $D\times\N$ respectively.
\end{lm}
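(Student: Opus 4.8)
The plan is to recognize the operator on the left-hand side of \eqref{e1-lm3.1} as the generator $\mathcal{G}$ of the (now genuinely Markov) process $(X(t),\alpha(t))$, namely $\mathcal{G}u(x,i)=\LL_i u(x,i)-q_i(x)u(x,i)+\sum_{j\in\N}q_{ij}(x)u(x,j)$, and to construct the solution from the Dynkin/Feynman--Kac representation while supplying the analytic regularity by classical $L^p$ elliptic theory. The natural candidate is
$$u(x,i)=\E_{x,i}\Big[\phi\big(X(\widetilde\tau_D),\alpha(\widetilde\tau_D)\big)-\int_0^{\widetilde\tau_D} f\big(X(s),\alpha(s)\big)\,ds\Big].$$
To make this rigorous I would first freeze the coupling: for a fixed bounded family $w=(w(\cdot,j))_{j\in\N}$ the scalar Dirichlet problem $\LL_i v-q_i v=f(\cdot,i)-\sum_{j\ne i}q_{ij}w(\cdot,j)$ in $D$, $v=\phi(\cdot,i)$ on $\partial D$, has a unique solution $v(\cdot,i)\in W^{2,p}_{loc}(D)\cap C(\bar D)$ by the standard theory for uniformly elliptic operators with nonnegative zeroth-order coefficient $q_i\ge 0$ (existence and interior/boundary $W^{2,p}$ estimates, uniqueness by the maximum principle, using Assumption \ref{asp3.1} and $\partial D\in C^2$). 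This defines a solution map $T:w\mapsto v$, and a fixed point of $T$ solves \eqref{e1-lm3.1}.

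The main obstacle is passing from a finite to a countable state space while retaining uniform control, and this reduces to the a priori bound $\sup_{x\in\bar D,\,i\in\N}\E_{x,i}\widetilde\tau_D<\infty$. I would obtain the latter from \eqref{e2-asp3.1}: from any state $i>n_0$ the chain jumps into the finite set $\{1,\dots,n_0\}$ at a rate bounded below by $\eps_0$, while from that finite set, on which $\min_{i\le n_0}\inf_{|x|\le R}\theta_{R,i}>0$, uniform ellipticity forces the diffusion to exit the bounded set $D$ before the next switch with probability bounded below; a renewal/geometric argument then caps the expected number of excursions into high states and yields the uniform exit-time bound. This is exactly where the countable state space is tamed, and it is the crux, since the coefficients $b(\cdot,i),\sigma(\cdot,i)$ and the ellipticity constant $\theta_{R,i}$ may degenerate as $i\to\infty$, so no single $x$-only barrier would suffice.

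With this in hand I would truncate to states $\{1,\dots,N\}$ (freezing the transitions to states $>N$), where $T$ acts on finitely many weakly coupled equations; existence and uniqueness of the truncated solution $u^N$ follow either from Schauder's fixed point theorem (using the compactness afforded by the elliptic estimates) or directly, because the switching block $-q_i u_i+\sum_{j\ne i}q_{ij}u_j$ is a (sub)Markov generator and the cooperative system obeys a maximum principle. The maximum principle together with the uniform exit-time bound gives $\|u^N\|_{L^\infty(\bar D\times\N)}\le\|\phi\|_\infty+\big(\sup_{x,i}\E_{x,i}\widetilde\tau_D\big)\|f\|_\infty$ independently of $N$, and interior $L^p$ estimates then bound $\|u^N(\cdot,i)\|_{W^{2,p}(K)}$ on each compact $K\subset D$ uniformly in $N$.

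Finally I would extract, by a diagonal argument over $i$ and $N$ together with the compact embedding $W^{2,p}(K)\hookrightarrow C^{1}(K)$ (valid since $p>n$), a limit $u\in\HB^p(D)$ with $u(\cdot,i)\in W^{2,p}_{loc}(D)\cap C(\bar D)$, and verify that it solves \eqref{e1-lm3.1}; here the uniform $L^\infty$ bound and \eqref{e2-asp3.1} let me dominate and pass the limit through the infinite sum $\sum_{j\in\N}q_{ij}u(\cdot,j)$. For uniqueness, if $u,\widetilde u\in\HB^p(D)$ solve \eqref{e1-lm3.1} with the same data, then $w=u-\widetilde u$ solves the homogeneous problem with zero boundary values; since $p>n$ gives $w(\cdot,i)\in C^1$, the It\^o formula for $W^{2,p}$ functions yields $w(x,i)=\E_{x,i}\,w\big(X(\widetilde\tau_D),\alpha(\widetilde\tau_D)\big)=0$, because $\widetilde\tau_D<\infty$ a.s.\ and $w=0$ on $\partial D\times\N$, the boundedness in $\HB^p$ supplying the integrability for the optional-stopping step. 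The same Dynkin computation identifies the probabilistic candidate above with the constructed strong solution, completing the proof.
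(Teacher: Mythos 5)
Your proposal is essentially correct, but it reaches the result by a genuinely different mechanism than the paper. The paper runs a Picard iteration $u_{m+1}=T u_m$ on the \emph{infinite} system directly, representing each iterate by the Feynman--Kac formula \eqref{e13-lm3.1}--\eqref{e3-lm3.1} and extracting a two-step contraction: for $i\le n_0$ the factor $p=\max_{i\le n_0}\E_{x,i}[1-\exp(-\int_0^{\tau_D}q_i(Y(s))ds)]<1$ comes from the positive probability of exiting before a switch, while for $i>n_0$ condition \eqref{e2-asp3.1} forces a definite fraction $\eps_0/M_D$ of the switching mass down into $\{1,\dots,n_0\}$, yielding $\sup_i\Delta^i_{m+2}\le[p+(1-p)\eps_1]\sup_i\Delta^i_m$ with $\eps_1=1-\eps_0/M_D$; uniform convergence of the iterates then gives existence, boundary continuity for free, and uniqueness by the same contraction applied to the integral representation of a homogeneous solution. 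You instead truncate the state space, prove a uniform a priori bound, and pass to the limit by $W^{2,p}$ compactness; the quantitative content of \eqref{e2-asp3.1} enters for you through the renewal bound $\sup_{x\in\bar D,\,i\in\N}\E_{x,i}\widetilde\tau_D<\infty$ (descent from high states dominated by an exponential of rate $\eps_0$, geometric number of low-state exit attempts each succeeding with probability at least $\min_{i\le n_0}\exp(-M_Dp_{i,D})$ as in \eqref{e2-lm3.3}), which is a correct and in fact slightly stronger statement than anything the paper proves here -- note that Theorem \ref{thm3.1} has to \emph{assume} the analogous bound on the unbounded domain. What the paper's route buys is explicitness (a convergent scheme with rate $[p+(1-p)\eps_1]^{1/2}$ per step, and boundary values inherited from every iterate); what yours buys is softness and a reusable exit-time estimate. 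Two points in your write-up deserve more care before it would count as complete: (i) interior $W^{2,p}$ estimates alone do not give $u(\cdot,i)\in C(\bar D)$ with $u=\phi$ on $\partial D$ after the compactness extraction -- you need a barrier/boundary modulus of continuity uniform in the truncation level $N$ (or the probabilistic representation of $u^N$), which is standard for $\partial D\in C^2$ but must be said; and (ii) the optional-stopping step in your uniqueness argument applies It\^o's formula to a $W^{2,p}_{loc}$ function of the countable-state switching process, which requires the truncation-in-$\alpha$ justification the paper carries out in \eqref{e5-lm3.5} via the stopping times $\zeta_k$ and Lemma \ref{lm3.4}, not merely integrability from the $\HB^p(D)$ bound.
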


\begin{proof}
The proof is motivated by that of \cite[Proposition A]{EF}.
However, because there are infinitely many equations,
significant modification is needed.
Let $p>n$ and
\begin{equation}\label{e18-lm3.1}
\hat M=\sup_{(x,i)\in\partial D\times\N} \{|\phi(x,i)|\}+\sup_{(x,i)\in D\times\N} \{|f(x,i)|\}<\infty.
\end{equation}
 By \cite[Theorem 9.1.5]{WYW},  for each $i\in\N$, there exists a strong solution
$u_0(x,i)\in W^{2,p}_{loc}(D)\cap C(\bar D)$ to
\begin{equation}
\begin{cases}
&\LL_i u_0(x,i)-q_i(x)u(x,i)=0\text{ in } D\times\N\\
&u_0(x,i)\big|_{\partial D}=\phi(x,i) \text{ on }\partial D\times\N.
\end{cases}
\end{equation}
Let $Y^{x,i}(t)$ be the solution to
\begin{equation}\label{e2-lm3.1}
dY(t)=b(Y(t), i)dt+\sigma(Y(t),i)dW(t),\ t\geq0
\end{equation}
with initial condition $x$
and $\tau^{x,i}_D=\inf\{t\geq0: Y^{x,i}(t)\notin\D\}$.
In view of the Feyman-Kac formula for diffusion processes
\begin{equation}\label{e13-lm3.1}
\begin{aligned}
u_{0}(x,i)=&\E_{x,i}\left[\phi(Y(\tau_D)),i)\exp\left(-\int_0^{\tau_D}q_i(Y(s))ds\right)\right]\\
&-\E_{x,i}\int_0^{\tau_D}\exp\left(-\int_0^tq_i(Y(s))ds\right)f(Y(t),i)dt.
\end{aligned}
\end{equation}
Note that in \eqref{e13-lm3.1} and what follows,
we drop the superscripts $x$ and $i$ in $Y^{x,i}$ and $\tau^{x,i}_D$
whenever 
the expectation $\E_{x,i}$ or probability $\PP_{x,i}$ is used.
By part (2) of Assumption \ref{asp3.1},
$\sup_{x\in\D}\E_{x,i}\tau_D<\infty$ for any $i\in\N$.
In view of \eqref{e13-lm3.1},
we have
\begin{equation}\label{e16-lm3.1}
|u_0(x,i)|\leq \sup_{x\in \partial\D}\{ |\phi(x,i)|\}+\sup_{x\in \D} \{|f(x,i)|\}\sup_{x\in \D}\{\E_{x,i}\tau_D\}.
\end{equation}
Let $n_0$ and $\eps_0$ satisfy \eqref{e2-asp3.1}. In particular, for $i>n_0$, $q_i(x)\geq \eps_0>0$ in $D$, we can have the following estimate from \eqref{e13-lm3.1}:
\begin{equation}\label{e17-lm3.1}
\begin{aligned}
|u_{0}(x,i)|\leq &\E_{x,i}|\phi_i(Y(\tau_D))|+\E_{x,i}\int_0^{\tau_D}\exp\left(-\int_0^tq_i(Y(s))ds\right)|f(Y(t),i)|dt\\
\leq &\sup_{x\in \partial\D}\{ |\phi(x,i)|\}+\sup_{x\in \D} \{|f(x,i)|\}\E_{x,i}\int_0^{\tau_D}\exp(-\eps_0t)dt\\
\leq &\sup_{x\in \partial\D}\{ |\phi(x,i)|\}+\sup_{x\in \D} \{|f(x,i)|\}\E_{x,i}\int_0^{\tau_D}\exp(-\eps_0t)dt\\
\leq &\sup_{x\in \partial\D}\{ |\phi(x,i)|\}+\eps_0^{-1}\sup_{x\in \D} \{|f(x,i)|\}.
\end{aligned}
\end{equation}
As a result of \eqref{e18-lm3.1}, \eqref{e16-lm3.1}, and \eqref{e17-lm3.1},
\begin{equation}\label{e19-lm3.1}
\begin{aligned}
M_0:=\sup_{(x,i)\in D\times\N} |u_0(x,i)|\leq&\sup_{x\in \partial\D, i\in\N}\{ |\phi(x,i)|\}+\eps_0^{-1}\sup_{x\in \D, i>n_0} \{|f(x,i)|\}
\\&+\sup_{x\in \D, i\leq n_0} \{|f(x,i)|\}\sup_{x\in \D, i\leq n_0}\{\E_{x,i}\tau_D\}<\infty.
\end{aligned}
\end{equation}
Since $u_0(x,i)$ is continuous in $\bar\D\times\N$
and $q_i(x)=\sum_{j\in\N}q_{ij}(x)$
is continuous and bounded in $\bar\D\times\N$,
it is easy to show that $\sum_{j\in\N} q_{ij}u_0(x,j)$ is continuous in $\bar\D\times\N$ and
\begin{equation}\label{e20-lm3.1}
\sup_{(x,i)\in D\times\N} \left|\sum_{j\in\N} q_{ij}u_0(x,j)\right|:= \hat M_0<\infty.
\end{equation}
Thus,
 for each $i\in\N$,
there exists a strong  solution
$u_1(x,i)\in W^{2,p}_{loc}(D)\cap C(\bar D)$  to
\begin{equation}
\begin{cases}
&\LL_i u_{1}(x,i)-q_i(x)u_{1}(x,i)=-\sum_{j\in\N}q_{ij}(x) u_{0}(x,j)\text{ in } D\times\N\\
&u_{1}(x,i)\big|_{\partial D}=\phi(x,i) \text{ on }\partial D\times\N.
\end{cases}
\end{equation}
owing to  \cite[Theorem 9.1.5]{WYW}.
Similar to \eqref{e19-lm3.1}, we can use \eqref{e20-lm3.1} to obtain that
\begin{equation}\label{e21-lm3.1}
\sup_{(x,i)\in D\times\N} |u_1(x,i)|:= M_1<\infty.
\end{equation}
Continuing this way, we can define recursively $\{u_{m+1}(x,i)\}\in\HB^{2,p}(D)$, the strong solution to
\begin{equation}
\begin{cases}
&\LL_i u_{m+1}(x,i)-q_i(x)u_{m+1}(x,i)=-\sum_{j\in\N}q_{ij}(x) u_{m}(x,j)\text{ in } D\times\N\\
&u_{m+1}(x,i)\big|_{\partial D}=\phi(x,i) \text{ on }\partial D\times\N.
\end{cases}
\end{equation}
By the Feyman-Kac formula,
\begin{equation}\label{e3-lm3.1}
\begin{aligned}
u_{m+1}(x,i)=&\E_{x,i}\left[\phi_i(Y(\tau_D)))\exp\left(-\int_0^{\tau_D}q_i(Y(s))ds\right)\right]\\
&+\E_{x,i}\int_0^{\tau_D}\exp\left(-\int_0^tq_i(Y(s))ds\right)\sum_{j\in\N}q_{ij}(Y(t))u_{m}(Y(t),j)dt\\
&-\E_{x,i}\int_0^{\tau_D}\exp\left(-\int_0^tq_i(Y(s))ds\right)f(Y(t),i)dt.
\end{aligned}
\end{equation}
Let $\Delta_m(x,i)=u_{m+1}(x,i)-u_{m}(x,i)$
and $$\Delta^i_m=\sup\{|\Delta_{m+1}(x,i)|: x\in\D\}$$
It follows from \eqref{e3-lm3.1} that
\begin{equation}\label{e4-lm3.1}
\begin{aligned}
|\Delta_{m+1}(x,i)|=&\E_{x,i}\int_0^{\tau_D}\exp\left(-\int_0^tq_i(Y(s))ds\right)\sum_{j\in\N}q_{ij}(Y(t))|\Delta_{m}(Y(t),j)|dt\\
\leq&\sup_{\{i\in\N\}}\{\Delta_m^{i}\}\E_{x,i}\int_0^{\tau_D}\exp\left(-\int_0^tq_i(Y(s))ds\right)q_{i}(Y(t))dt\\
=&\sup_{\{i\in\N\}}\{\Delta_m^{i}\} \E_{x,i}\left[1- \exp\left(-\int_0^{\tau_D}q_i(Y(s))ds\right)\right].
\end{aligned}
\end{equation}
Let
$$p:=\max_{\{i\leq n_0\}}\E_{x,i}\left[1- \exp\left(-\int_0^{\tau_D}q_i(Y(s))ds\right)\right]<1.$$
We have from \eqref{e4-lm3.1} that
\begin{equation}\label{e5-lm3.1}
\begin{aligned}
\max_{\{i\leq n_0\}}\{\Delta_{m+1}^{i}\}\leq p\sup_{\{i\in\N\}}\{\Delta_m^{i}\}.
\end{aligned}
\end{equation}
It also follows from \eqref{e4-lm3.1} that
\begin{equation}\label{e6-lm3.1}
\begin{aligned}
\sup_{\{i\in\N\}}\{\Delta_{m+1}^i\}\leq \sup_{\{i\in\N\}}\{\Delta_m^i\}.
\end{aligned}
\end{equation}
For $i>n_0$, using \eqref{e4-lm3.1} again and then using \eqref{e5-lm3.1} and \eqref{e6-lm3.1}, we have
\begin{equation}\label{e14-lm3.1}
\begin{aligned}
|\Delta_{m+2}(x,i)|\leq& \max_{\{i\leq n_0\}}\{\Delta_{m+1}^i\}\E_{x,i}\int_0^{\tau_D}\exp\left(-\int_0^tq_i(Y(s))ds\right)\sum_{j\leq n_0} q_{ij}(Y(t))dt\\
&+\sup_{\{i> n_0\}}\{\Delta_{m+1}^i\}\E_{x,i}\int_0^{\tau_D}\exp\left(-\int_0^tq_i(Y(s))ds\right)\sum_{j> n_0} q_{ij}(Y(t))dt\\
\leq& p\sup_{\{i\in\N\}}\{\Delta_{m+1}^i\}\E_{x,i}\int_0^{\tau_D}\exp\left(-\int_0^tq_i(Y(s))ds\right)\sum_{j\leq n_0} q_{ij}(Y(t))dt\\
&+\sup_{\{i\in \N\}}\{\Delta_{m+1}^i\}\E_{x,i}\int_0^{\tau_D}\exp\left(-\int_0^tq_i(Y(s))ds\right)\sum_{j> n_0} q_{ij}(Y(t))dt\\
\leq& p\sup_{\{i\in\N\}}\{\Delta_{m+1}^i\}\E_{x,i}\int_0^{\tau_D}\exp\left(-\int_0^tq_i(Y(s))ds\right)\sum_{j\in\N} q_{ij}(Y(t))dt\\
&+(1-p)\sup_{\{i\in \N\}}\{\Delta_{m+1}^i\}\E_{x,i}\int_0^{\tau_D}\exp\left(-\int_0^tq_i(Y(s))ds\right)\sum_{j> n_0} q_{ij}(Y(t))dt.\\
\end{aligned}
\end{equation}
Let
\begin{equation}\label{M_D}
M_D=\sup_{(x,i)\in\D\times N}q_i(x).
\end{equation}
 Note that
$$\dfrac{\sum_{j> n_0} q_{ij}(x)}{q_i(x)}=1-\dfrac{\sum_{j\leq n_0} q_{ij}(x)}{q_i(x)}\leq 1-\dfrac{\eps_0}{M_D}:=\eps_1 \,\text{ for }\, i>n_0,$$
which implies that
\begin{equation}\label{e15-lm3.1}
\begin{aligned}
\E_{x,i}\int_0^{\tau_D}&\exp\left(-\int_0^tq_i(Y(s))ds\right)\sum_{j> n_0} q_{ij}(Y(t))dt\\
\leq& \eps_1\E_{x,i}\int_0^{\tau_D}\exp\left(-\int_0^tq_i(Y(s))ds\right) q_{i}(Y(t))dt\\
\leq&\eps_1\,\qquad\text{ for }\,i>n_0.
\end{aligned}
\end{equation}
In view of \eqref{e14-lm3.1} and \eqref{e15-lm3.1},
we have
\begin{equation}\label{e7-lm3.1}
\sup_{\{i>n_0\}}\Delta^i_{m+2}
\leq [p+(1-p)\eps_1]\sup_{\{i\in \N\}}\{\Delta_{m+1}^i\}.
\end{equation}
By \eqref{e5-lm3.1} and \eqref{e6-lm3.1},
\begin{equation}\label{e8-lm3.1}
\begin{aligned}
\sup_{\{i\leq n_0\}}\{\Delta_{m+2}^i\}\leq p\max_{\{i\in\N\}}\{\Delta_{m+1}^i\}\leq \max_{\{i\in\N\}}\{\Delta_{m}^i\}.
\end{aligned}
\end{equation}
By \eqref{e8-lm3.1} and \eqref{e7-lm3.1},
\begin{equation}\label{e9-lm3.1}
\sup_{\{i\in\N\}}\{\Delta_{m+2}^i\}\leq [p+(1-p)\eps_1]\max_{\{i\in\N\}}\{\Delta_{m}^i\}.
\end{equation}
In view of \eqref{e19-lm3.1} and \eqref{e20-lm3.1},
$\sup_{i\in\N}\{\Delta_1^i\}\leq M_0+M_1<\infty$.
Since $p+(1-p)\eps_1<1$, it follows from \eqref{e6-lm3.1} and \eqref{e9-lm3.1} that
the series $\sum_{m=1}^\infty \sup_{\{i\in\N\}}\{\Delta_{m+2}^i\}$
is convergent.
Thus $u_m(x,i)$ converges uniformly in $(x,i)$
to a function $u(x,i)$.
For each $i\in\N$,
since
$q_i(x)=\sum_j q_{ij}(x)$ is continuous,
the convergence
$\lim_{k\to\infty}\sum_{j<k} q_{ij} (x)=q_i(x)$
is uniform.
Thus,
it is easy to show that as $m\to\infty$,
$\sum_j  q_{ij} (x)u_m(x,j)$ converges uniformly to $\sum_j q_{ij} (x)u(x,j)$, (which is also continuous in $x$).
Using this uniform convergence,
passing the limit in \eqref{e3-lm3.1}
we have
\begin{equation}\label{e10-lm3.1}
\begin{aligned}
u(x,i)=&\E_{x,i}\left[\phi_i(Y(\tau_D)))\exp\left(-\int_0^{\tau_D}q_i(Y(s))ds\right)\right]\\
&+\E_{x,i}\int_0^{\tau_D}\exp\left(-\int_0^tq_i(Y(s))ds\right)\sum_{j\in\N}q_{ij}u(Y(t),j)dt\\
&-\E_{x,i}\int_0^{\tau_D}\exp\left(-\int_0^tq_i(Y(s))ds\right)f(Y(t),i)dt.
\end{aligned}
\end{equation}
Since $\sum_j  q_{ij} (x)u(x,j)$ is continuous in $x$ for each $i\in\N$,
the representation \eqref{e10-lm3.1}
shows that
$u(x,i)$ satisfies
$$ \LL_i u(x,i)-q_i(x)u(x,i)= f(x,i)-\sum_{j\in\N}  q_{ij}(x)u(x,j)\text{ in } D\times\N$$
Since $u_m(x,i)=\phi(x,i)$ on $\partial D\times\N$ for all $m\in\N$,
we have $u(x,i)= \phi(x,i)$ on $\partial D\times\N$.
The existence of solutions is therefore proved.
To prove the uniqueness,
it suffices to consider
the uniqueness in $\HB^p(D)$ of the system
\begin{equation}\label{e11-lm3.1}
\begin{cases}
&\LL_i v(x,i)-q_i(x)v(x,i)+\sum_{j=1}^\infty q_{ij}(x)v(x,j)=0\text{ in } D\times\N\\
&v(x,i)\big|_{\partial D}=0 \text{ on }\partial D\times\N.
\end{cases}
\end{equation}
Let $\{v(x,i)\}\in\HB^p(D)$ be a solution of \eqref{e11-lm3.1}.
Then we have
\begin{equation}\label{e12-lm3.1}
v(x,i)=-\E_{x,i}\int_0^{\tau_D}\exp\left(-\int_0^tq_i(Y(s))ds\right)\sum_{j\in\N}q_{ij}v(Y(s),j)ds.
\end{equation}
Similar to \eqref{e5-lm3.1},
it follows from \eqref{e12-lm3.1} that
$$\sup_{i\leq n_0, x\in D} \{|v(x,i)|\}\leq p\sup_{i\in\N, x\in D} \{|v(x,i)|\}.
$$
Similar to \eqref{e7-lm3.1}, the above inequality and  \eqref{e12-lm3.1}
imply that
$$\sup_{i\in\N, x\in D} \{|v(x,i)|\}\leq [p+(1-p)\eps_1]\sup_{i\in\N, x\in D} \{|v(x,i)|\}.$$
Thus $\sup_{i\in\N, x\in D} \{|v(x,i)|\}=0$,
that is, \eqref{e11-lm3.1} has a unique solution.
\end{proof}

\begin{lm}\label{lm3.2}
Let $D$ be an open and bounded set of $\R^n$.
Let $\xi_0=0$ and
$\xi_k=\inf\{t\geq 0: \alpha(t)\ne\alpha(\xi_{k-1})\},k\in\N$.
Let $f(x,i)$ and $g(x,i)$ are bounded and measurable functions on $D\times\N$
and $\partial D\times\N$ respectively.
Then
\begin{equation}\label{e3-lm3.2}
\begin{aligned}
\E_{x,i}& 1_{\{\xi_1\leq\widetilde\tau_D\}}f(X(\xi_1),\alpha(\xi_1))+\E_{x} 1_{\{\xi_1>\widetilde\tau_D\}}g(X({\widetilde\tau_D}),i)\\
=&
\E_{x,i}\int_0^{\tau_D} q_{ij}(Y(t))f(Y(t),j)\exp\left(-\int_0^{t}q_i(Y(s))ds\right)
\\
&+\E_{x,i} g(Y({\tau_D}),i)\exp\left(-\int_0^{\tau_D}q_i(Y(t))dt\right).
\end{aligned}
\end{equation}
\end{lm}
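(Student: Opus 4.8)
The plan is to condition on the driving Brownian motion $W$ and to exploit the fact that, before the first switch, the continuous component coincides with the frozen diffusion $Y^{x,i}$ of \eqref{e2-lm3.1}, whose first switch is then governed by the \emph{independent} Poisson measure $\p$.

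First I would record the pathwise comparison. On the random interval $[0,\xi_1]$ the discrete component is frozen, $\alpha(t)\equiv i$, so $X$ solves $dX=b(X,i)dt+\sigma(X,i)dW$ with $X(0)=x$ and the same $W$ that drives $Y^{x,i}$; the local Lipschitz condition of Assumption \ref{asp3.1} gives pathwise uniqueness for \eqref{e2-lm3.1}, forcing $X(t)=Y^{x,i}(t)$ for all $t\in[0,\xi_1]$ (the jump at $\xi_1$ touches only $\alpha$, so $X$ stays continuous there). Writing $Y=Y^{x,i}$ and $\tau_D=\tau_D^{x,i}$, this yields $\widetilde\tau_D\wedge\xi_1=\tau_D\wedge\xi_1$, hence $\{\xi_1\le\widetilde\tau_D\}=\{\xi_1\le\tau_D\}$ and $\{\xi_1>\widetilde\tau_D\}=\{\xi_1>\tau_D\}$, with $X(\xi_1)=Y(\xi_1)$ on the first event and $X(\widetilde\tau_D)=Y(\tau_D)$ on the second. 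Thus I may replace $(X,\widetilde\tau_D)$ by $(Y,\tau_D)$ on the left-hand side of \eqref{e3-lm3.2}.

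The core step is to identify the conditional law of the pair $(\xi_1,\alpha(\xi_1))$ given $W$. Since $\p$ is independent of $W$ and $\alpha(s)\equiv i$ for $s<\xi_1$, the first switch is triggered by the first atom of $\p$ landing in the (now $W$-measurable) time-varying window $\{(t,z):z\in\bigcup_{j\ne i}\Delta_{ij}(Y(t))\}$, whose $z$-section has $\m$-measure $q_i(Y(t))$. Because $\m$ is Lebesgue measure, the time marginal of these atoms is an inhomogeneous Poisson process of rate $q_i(Y(t))$, whence
\begin{equation*}
\PP\{\xi_1>t\mid W\}=\exp\Big(-\int_0^t q_i(Y(s))\,ds\Big),
\end{equation*}
and the conditional density of $\xi_1$ at $t$ is $q_i(Y(t))\exp(-\int_0^t q_i(Y(s))ds)$; moreover, at a switch occurring at time $t$ the atom's $z$-coordinate is uniform on the window, so $\alpha$ moves to $j$ with conditional probability $q_{ij}(Y(t))/q_i(Y(t))$, independently of $\xi_1$. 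Deriving this conditional law rigorously from the representation $d\alpha(t)=\int_\R h(X_t,\alpha(t-),z)\,\p(dt,dz)$ is the step I expect to be the main obstacle.

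Finally I would assemble the identity. Conditioning on $W$ and using that $\tau_D$ is $W$-measurable,
\begin{align*}
\E\big[\1_{\{\xi_1\le\tau_D\}}f(Y(\xi_1),\alpha(\xi_1))\mid W\big]
&=\int_0^{\tau_D}\sum_{j\ne i}\frac{q_{ij}(Y(t))}{q_i(Y(t))}f(Y(t),j)\,q_i(Y(t))\,e^{-\int_0^tq_i(Y(s))ds}\,dt\\
&=\int_0^{\tau_D}\sum_{j\ne i}q_{ij}(Y(t))f(Y(t),j)\,e^{-\int_0^tq_i(Y(s))ds}\,dt,
\end{align*}
where on $\{q_i(Y(t))=0\}$ no switch can occur and both integrands vanish, while
\begin{equation*}
\E\big[\1_{\{\xi_1>\tau_D\}}g(Y(\tau_D),i)\mid W\big]=g(Y(\tau_D),i)\,e^{-\int_0^{\tau_D}q_i(Y(s))ds}.
\end{equation*}
Adding these and taking expectation over $W$ yields \eqref{e3-lm3.2} (the summation over $j$ and the factor $dt$ being understood in the first right-hand term). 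The interchange of the sum over $j$ with the integral and expectation is justified by dominated convergence, using that $f$ is bounded and $\sum_{j\ne i}q_{ij}(x)=q_i(x)\le M_D:=\sup_{(x,i)\in D\times\N}q_i(x)<\infty$ on the bounded set $D$; the uniform ellipticity in Assumption \ref{asp3.1} moreover guarantees $\tau_D<\infty$ a.s., so all terms are finite.
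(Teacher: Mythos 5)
Your proof follows essentially the same route as the paper: identify $(X,\alpha)$ with the frozen pair $(Y^{x,i},\beta^{x,i})$ up to the first switching time via pathwise uniqueness of the frozen SDE, so that $\widetilde\tau_D\wedge\xi_1=\tau_D\wedge\xi_1$, and then evaluate the law of $(\xi_1,\alpha(\xi_1))$ conditional on the Brownian path. The one step you flag as the main obstacle---the conditional survival function $\exp(-\int_0^t q_i(Y(s))\,ds)$ and the thinning probability $q_{ij}/q_i$ for the post-jump state---is precisely what the paper does not rederive but imports wholesale from \cite[Lemma 4.2]{DY}, so your sketch of that computation is a correct unpacking of the cited result rather than a divergence from it.
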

\begin{proof}
Define
$$
\beta^{x,i}(t)=i+\int_0^t\int_{\R}h(Y^{x,i}_t,\beta^{x,i}(t-), z)\p(dt, dz).
$$
Let $\lambda^{x, i}_1=\inf\{t\geq 0: \beta^{x, i}(t)\ne i\}.$ We have that
\begin{equation}\label{e1-lm3.2}
(X^{x,i}(t),\alpha^{x,i}(t))=(Y^{x,i}(t),\beta^{x,i}(t))\,
\text{ up to }\,\lambda^{x,i}_1=\xi^{x,i}_1,
\end{equation}
where $(X^{x,i}(t),\alpha^{x,i}(t))$ is the solution to \eqref{e2.3} with initial value $(x,i)$
and $\xi^{x,i}_1$ is the first moment of jump for $\alpha^{x,i}(t)$.
Thus,
\begin{equation}\label{e4-lm3.2}
\PP_{x,i}\{\xi_1\wedge\widetilde\tau_D<\infty\}=\PP_{x,i}\{\lambda_1\leq\tau_D\}\geq \PP_{x,i}\{\tau_D<\infty\}=1.
\end{equation}

In view of \cite[Lemma 4.2]{DY},
\begin{equation}\label{e2-lm3.2}
\begin{aligned}
\E_{x,i}& 1_{\{\lambda_1\leq\tau_D\}}f(Y(\lambda_1),\beta(\lambda_1))+\E_{x,i} 1_{\{\lambda_1>\tau_D\}}g(Y({\tau_D}),i)\\
=&
\E_{x,i}\int_0^{\tau_D} q_{ij}(Y(t))f(Y(t),j)\exp\left(-\int_0^{t}q_i(Y(s))ds\right)
\\
&+\E_{x,i} g(Y({\tau_D}),i)\exp\left(-\int_0^{\tau_D}q_i(Y(t))dt\right).
\end{aligned}
\end{equation}
Combining \eqref{e1-lm3.2} and \eqref{e2-lm3.2},
we obtain \eqref{e3-lm3.2}.
\end{proof}

\begin{lm}\label{lm3.3}
Let $D$ be an open bounded set in $\R^n$.
For any $\eps>0$, there is an
$n_2=n_2(\eps)>0$ such that
$$\PP_{x,i}\{\xi_{n_2}\leq\widetilde\tau_D\}<\eps$$
for any $(x,i)\in\B\times\N$.
As a result, $$\PP_{x,i}\{\widetilde\tau_D<\infty\}=1.$$
Moreover,
for any $k>0$, there is a $T>0$ such that
$$\PP_{x,i}\{\xi_{k}\wedge\widetilde\tau_D>T\}<\eps.$$
\end{lm}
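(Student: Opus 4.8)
The entire statement reduces to a single uniform estimate: there exist an integer $N_0$ and a constant $\rho>0$ with
\begin{equation}\label{plan-key}
\inf_{(x,i)\in D\times\N}\PP_{x,i}\{\widetilde\tau_D<\xi_{N_0}\}\geq \rho .
\end{equation}
Granting \eqref{plan-key}, I would obtain the first assertion by iterating the strong Markov property at the switching times $\xi_{mN_0}$: as long as the process has not left $D$ one has $(X(\xi_{mN_0}),\alpha(\xi_{mN_0}))\in D\times\N$, so \eqref{plan-key} applies afresh and $\PP_{x,i}\{\xi_{mN_0}\leq\widetilde\tau_D\}\leq(1-\rho)^m$; taking $n_2=mN_0$ with $(1-\rho)^m<\eps$ gives $\PP_{x,i}\{\xi_{n_2}\leq\widetilde\tau_D\}<\eps$. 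The second assertion is then immediate, since $\{\widetilde\tau_D=\infty\}\subseteq\{\xi_{n_2}\leq\widetilde\tau_D\}$ for every $n_2$, forcing $\PP_{x,i}\{\widetilde\tau_D=\infty\}<\eps$ for all $\eps$. The genuine difficulty is that \eqref{plan-key} must hold uniformly over the \emph{infinite} regime set $\N$, and this is precisely where Assumption \ref{asp3.1} enters.

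To prove \eqref{plan-key} I would split $\N$ into the finite ``low'' block $L=\{1,\dots,n_0\}$ and its complement, with $n_0,\eps_0$ as in \eqref{e2-asp3.1}. For a low regime $j\le n_0$, Lemma \ref{lm3.2} (with $f\equiv 0,\ g\equiv 1$) gives
\[
\PP_{x,j}\{\widetilde\tau_D<\xi_1\}=\E_{x,j}\exp\Big(-\int_0^{\tau_D}q_j(Y(s))\,ds\Big)\geq e^{-M_D T_0}\,\PP_{x,j}\{\tau_D\le T_0\},
\]
where $Y$ is the frozen diffusion and $M_D$ is the bound \eqref{M_D}. Uniform ellipticity on the compact set $\bar D$ over the finite block $L$ makes $\sup_{x\in D,\,j\le n_0}\E_{x,j}\tau_D$ finite, whence $\PP_{x,j}\{\tau_D\le T_0\}\ge\tfrac12$ for $T_0$ large, so the diffusion leaves $D$ before the first switch with a uniform probability $c'>0$. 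For a high regime $i>n_0$, Lemma \ref{lm3.2} with $g\equiv 0$ and $f(\cdot,k)=\1_{\{k\in L\}}$, combined with $\sum_{k\in L}q_{ik}\ge\eps_0$ on $D$ and $q_i\le M_D$, shows that, conditioned on switching before leaving $D$, the new regime lands in $L$ with probability at least $\eps_0/M_D$. Hence from any high regime the chain reaches $L$ (or leaves $D$) within a geometric number of switches, uniformly; chaining this with the low-regime estimate via the strong Markov property yields \eqref{plan-key} for a suitable $N_0$ and $\rho$ of the form $\tfrac12 c'$.

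For the final assertion it suffices to establish the stronger uniform bound $\sup_{(x,i)\in D\times\N}\PP_{x,i}\{\widetilde\tau_D>T\}<\eps$ for $T$ large, since $\{\xi_k\wedge\widetilde\tau_D>T\}\subseteq\{\widetilde\tau_D>T\}$. Fixing $n_2=n_2(\eps/2)$ from the first assertion, I would split on the number of switches before time $T$. On $\{\widetilde\tau_D>T,\ \xi_{n_2}\le T\}$ one has $\xi_{n_2}\le\widetilde\tau_D$, of probability $<\eps/2$. On $\{\widetilde\tau_D>T,\ \xi_{n_2}>T\}$ the process stays in $D$ throughout $[0,T]$ with fewer than $n_2$ switches, so by the pigeonhole principle one of the $n_2$ subintervals of length $T/n_2$ is switch-free; conditioning at its left endpoint and summing over the $n_2$ possibilities bounds this contribution by $n_2$ times the largest probability of staying in $D$ without switching over a window of length $T/n_2$. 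This last quantity is small for large $T$: on a switch-free window with frozen regime $j$, if $j\le n_0$ the diffusion remains in $D$ for time $T/n_2$ with probability at most $\sup_{x\in D}\PP_{x,j}\{\tau_D>T/n_2\}\to 0$ (uniform ellipticity on the finite block $L$), while if $j>n_0$ the probability of no switch is at most $e^{-\eps_0 T/n_2}$ because $q_j\ge\eps_0$ on $D$. Choosing $T$ large makes both parts $<\eps/2$, uniformly in $(x,i)$.

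The recurring obstacle is the uniformity over the countable regime space: the frozen-diffusion exit estimates and the Feynman--Kac representations are routine, and it is Assumption \ref{asp3.1}(3), in the quantitative form \eqref{e2-asp3.1}, that drives the discrete component into the finite block $L$ at a rate bounded below and thereby upgrades the per-regime estimates to the uniform bounds \eqref{plan-key} and $\sup_{x,i}\PP_{x,i}\{\widetilde\tau_D>T\}\to 0$.
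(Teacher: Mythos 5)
Your argument is correct, and for the first two assertions it is essentially the paper's proof: you establish a uniform lower bound on the probability of exiting $D$ within a bounded number of switches by splitting $\N$ into the finite block $\{1,\dots,n_0\}$ (where $\sup_{x\in D}\E_{x,j}\tau_D<\infty$ gives exit before the first switch with uniform positive probability) and its complement (where \eqref{e2-asp3.1} forces the first switch, if it precedes exit, to land in the finite block with conditional probability at least $\eps_0/M_D$), and then iterate via the strong Markov property; the paper does exactly this with $N_0=2$ in \eqref{e7-lm3.3}--\eqref{e8-lm3.3}, and your ``geometric number of switches'' phrasing only allows a possibly larger $N_0$ through the same mechanism. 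For the final assertion the routes genuinely differ. The paper proves the uniform one-step bound $\PP_{x,i}\{\xi_1\wedge\widetilde\tau_D\le T_2\}>1-\eps'$ (exit quickly in low regimes via the exit-time moment, switch quickly in high regimes since $q_i\ge\eps_0$ on $D$) and iterates it $k$ times, producing a $T=kT_2$ that grows with $k$. You instead prove the stronger, $k$-independent statement $\sup_{(x,i)\in D\times\N}\PP_{x,i}\{\widetilde\tau_D>T\}<\eps$ by a pigeonhole argument: on $\{\widetilde\tau_D>T,\ \xi_{n_2}>T\}$ some window of length $T/n_2$ is switch-free, and on such a window the frozen dynamics either exits $D$ (low regimes) or must switch (high regimes), except on an event whose probability tends to $0$ as $T\to\infty$. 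Both arguments rest on the same two ingredients from Assumption \ref{asp3.1}; yours yields the sharper conclusion that the exit time $\widetilde\tau_D$ itself has uniformly small tails, at the cost of a slightly more involved union bound, while the paper's iteration is shorter and suffices for the statement as written.
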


\begin{proof}
For each $i\in\N$, we have that
$$p_{i,D}:=\sup_{x\in\D}\E_{x,i}\tau_D<\infty.$$
By Lemma \ref{lm3.2}, with $M_D$ defined as in \eqref{M_D}, we have
\begin{equation}\label{e2-lm3.3}
\begin{aligned}
\PP_{x,i}\{\xi_1>\widetilde\tau_D\}=&\E_{x,i} \exp\left(-\int_0^{\tau_D}q_i(Y(t))dt\right)\\
\geq&\E_{x,i} \exp\left(-M_D\tau_D\right)\\
\geq&\exp\left(-M_D\E_{x,i}\tau_D\right)\\
\geq&\exp\left(-M_Dp_{i,D}\right).
\end{aligned}
\end{equation}
Let $\tilde p:=\min_{\{i\leq n_0\}}\{\exp\left(-M_Dp_{i,D}\right)\}$.
By \eqref{e2-asp3.1}, for $i>n_0$,
$$\dfrac{\sum_{j\leq n_0}q_{ij}(x)}{q_i(x)}\geq \dfrac{\eps_0}{M_D}>0, x\in D.$$
Applying Lemma \ref{lm3.2} again,
we have
\begin{equation}\label{e3-lm3.3}
\begin{aligned}
\PP_{x,i}\{\xi_1\leq\widetilde\tau_D,\,\alpha(\xi_1)\leq n_0\}
=&\E_{x,i}\int_0^{\tau_B} \sum_{j\leq n_0}q_{ij}(Y(t))\exp\left(-\int_0^{t}q_i(Y(s))ds\right)\\
\geq&\dfrac{\eps_0}{M_D}\E_{x,i}\int_0^{\tau_B} q_{i}(Y(t))\exp\left(-\int_0^{t}q_i(Y(s))ds\right)\\
=&\dfrac{\eps_0}{M_D}\PP_{x,i}\{\xi_1\leq\widetilde\tau_D\}.
\end{aligned}
\end{equation}
By the strong Markov property,
\eqref{e2-lm3.3}, and \eqref{e3-lm3.3}, we have
for $i>n_0$ that
\begin{equation}\label{e4-lm3.3}
\begin{aligned}
\PP_{x,i}\{\xi_1\leq\widetilde\tau_D, \xi_2>\widetilde\tau_D\}\geq
&\PP_{x,i}\{\xi_2<\widetilde\tau_D,\,\xi_1\geq\widetilde\tau_D,\,\alpha(\xi_1)\leq n_0\}\\
\geq &\PP_{x,i}\{\xi_1\leq\widetilde\tau_D,\,\alpha(\xi_1)\leq n_0\}\left[\inf_{\{y\in\D,j\leq n_0\}}\PP_{y,j}\{\xi_1<\widetilde\tau_D\}\right]\\
\geq &\dfrac{\tilde p\eps_0}{M_D}\PP_{x,i}\{\xi_1\leq\widetilde\tau_D\}
.\end{aligned}
\end{equation}
Since $\tilde p<1$ and $\dfrac{\eps_0}{M_D}\geq 1$,
we have from \eqref{e3-lm3.3} that
\begin{equation}\label{e5-lm3.3}
\begin{aligned}
\PP_{x,i}\{\xi_2>\widetilde\tau_D\}
\geq& \PP\{\xi_1>\widetilde\tau_D\}+\PP_{x,i}\{\xi_1\leq \widetilde\tau_D, \xi_2>\widetilde\tau_D\}\\
\geq& \PP\{\xi_1>\widetilde\tau_D\}+\dfrac{\tilde p\eps_0}{M_D}\PP_{x,i}\{\xi_1\leq\widetilde\tau_D\}\\
\geq& \dfrac{\tilde p\eps_0}{M_D}\,\text{ for }\, x\in\D, i>n_0.
\end{aligned}
\end{equation}
In light of \eqref{e2-lm3.3},
\begin{equation}\label{e6-lm3.3}
\begin{aligned}
\PP_{x,i}\{\xi_2>\widetilde\tau_D\}
\geq& \PP\{\xi_1>\widetilde\tau_D\}\geq \tilde p\,\text{ for }\, x\in D, i\leq n_0.
\end{aligned}
\end{equation}
Thus, for any $x\in D$ and $i\in\N$, we have
\begin{equation}\label{e7-lm3.3}
\begin{aligned}
\PP_{x,i}\{\xi_2>\widetilde\tau_D\}
\geq& \dfrac{\tilde p\eps_0}{M_D}.
\end{aligned}
\end{equation}
Using the strong Markov property,
we have from \eqref{e7-lm3.3} that
\begin{equation}\label{e8-lm3.3}
\begin{aligned}
\PP_{x,i}\{\xi_{2k}\leq\widetilde\tau_D\}
\leq& \left(1-\dfrac{\tilde p\eps_0}{M_D}\right)^k.
\end{aligned}
\end{equation}
By letting $n_2=2k_2+1$ with $k_2$ being sufficiently large so that
$\left(1-\dfrac{\tilde p\eps_0}{M_D}\right)^{k_2}<\eps$, we complete the proof for the first part of this lemma.

To prove the second part,
note that $\E_{x,i}\tau_D\leq p_{i,D}<\infty$,
thus for any $\eps'>0$, there is $T_1>0$ such that
\begin{equation}\label{e9-lm3.3}
\begin{aligned}
\PP_{x,i}\{\xi_1\wedge\widetilde\tau_D\leq T_1\}
=& \PP_{x,i}\{\lambda_1\wedge\tau_D\leq T_1\}\\
\geq&\PP_{x,i}\{\tau_D\leq T_1\}>1-\eps'\,\text{ for all } x\in\D, i\leq n_0.
\end{aligned}
\end{equation}
For $i>n_0$, we have
\begin{equation}\label{e10-lm3.3}
\begin{aligned}
\PP_{x,i}\{\xi_1\wedge\widetilde\tau_D> T\}
=& \PP_{x,i}\{\tau_D> T, \lambda_1> T\}\\
\geq& \E_{x,i}\left[\1_{\{\tau_D> T\}}\int_0^{T} q_{i}(Y(t))\exp\left(-\int_0^{t}q_i(Y(s))ds\right)dt\right]\\
=& \E_{x,i}\left[\1_{\{\tau_D> T\}}\exp\left(-\int_0^{T}q_i(Y(s))ds\right)\right]\\
\leq &\E_{x,i}\left[\1_{\{\tau_D> T\}}\exp\left(-T\eps_0\right)\right] \,\text{ (since }\, q_i(x)>\eps \text{ if }x\in\D, i>n_0).
\end{aligned}
\end{equation}
Let $T_2>T_1$ such that $\exp(-T_2\eps_0)<\eps'$.
We have from \eqref{e10-lm3.3} that
\begin{equation}\label{e11-lm3.3}
\PP_{x,i}\{\xi_1\wedge\widetilde\tau_D\leq T_2\}>1-\eps'\,\text{ for }\,x\in\D, i>n_0.
\end{equation}
Using \eqref{e9-lm3.3} and \eqref{e11-lm3.3},
\begin{equation}\label{e12-lm3.3}
\PP_{x,i}\{\xi_1\wedge\widetilde\tau_D\leq T_2\}>1-\eps'\,\text{ for }\,x\in\D, i\in\N.
\end{equation}
Using the strong Markov property, it is easy to show that
\begin{equation}\label{e13-lm3.3}
\PP_{x,i}\{\xi_k\wedge\widetilde\tau_D\leq kT_2\}>(1-\eps')^k\,\text{ for }\,x\in\D, i\in\N.
\end{equation}
By choosing $\eps'$ such that $(1-\eps')^k>1-\eps$,
we obtain the second part of this lemma.
\end{proof}

\begin{lm}\label{lm3.4}
Let $D\in\R^n$ be a bounded set.
For $i_0\in\N, T>0,\eps>0$,
there is a $k_0=k_0(i_0,T,\eps)>0$ such that
$$\PP_{x,i_0}\{\zeta_{k_0}>T\}<\eps,\,x\in D,$$
where
$\zeta_k=\inf\{t>0: \alpha(t)\geq k\}.$
\end{lm}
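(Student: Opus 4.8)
The plan is to choose the threshold $k_0$ so large that $\alpha$, started from $i_0$, reaches the level set $\{j\in\N:j\ge k_0\}$ within the fixed horizon $[0,T]$ only with probability less than $\eps$, uniformly over $x\in D$; that is, to bound $\sup_{x\in D}\PP_{x,i_0}\{\zeta_{k_0}\le T\}$ by $\eps$. The mechanism is that the total switching intensity is finite on bounded $x$-sets (Assumption~\ref{asp3.1}(1), which carries over Assumption~\ref{asp2.3}(iii) or \ref{asp2.4}(iii) to the variable $x$), so on a finite time interval the jump activity of $\alpha$ is tight and the chain cannot be driven to arbitrarily high states except on a small set.

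First I would confine the continuous component. Writing $\tau_R=\inf\{t\ge0:|X(t)|>R\}$, I would use the Lyapunov function supplied by Assumption~\ref{asp3.1}(1) together with the functional It\^o formula \eqref{f.Ito} to pick $R$ so large that $\sup_{x\in D}\PP_{x,i_0}\{\tau_R\le T\}<\eps/2$; the bound is uniform in $x\in D$ because the Lyapunov function is bounded on the compact set $\bar D$. On $\{\tau_R>T\}$ the trajectory stays in $\{|x|\le R\}$, where $q_i(x)\le M_R:=\sup_{|x|\le R,\,i\in\N}q_i(x)<\infty$ and the tails $\sum_{j}q_{ij}(x)=q_i(x)$ are summable and continuous. (When the global bound of Assumption~\ref{asp2.3}(iii) is in force this confinement step is unnecessary and one may take $R=\infty$.)

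To control the level I would run a Lyapunov argument for the discrete coordinate. Fix a function $W:\N\to[1,\infty)$ with $W(j)\to\infty$ whose generator action stays bounded on the confining ball, $\sup_{|x|\le R,\,i\in\N}\sum_{j}q_{ij}(x)\big(W(j)-W(i)\big)\le C_R<\infty$, and apply \eqref{f.Ito} to $V(\phi,i):=W(i)$, which lies in $\BF$; since $W$ depends only on $i$ the diffusion terms drop out and $\LL V(\phi,i)=\sum_{j}q_{ij}(\phi(0))\big(W(j)-W(i)\big)\le C_R$ on the ball. Stopping at $T\wedge\zeta_{k_0}\wedge\tau_R\wedge\eta_m$, where $\eta_m=\inf\{t:|X(t)|\vee\alpha(t)>m\}$ are the localizing times of Lemma~\ref{lm2.5} (increasing to infinity by nonexplosion, so that $W(\alpha(\cdot))$ is bounded before $\eta_m$), and then letting $m\to\infty$ by Fatou, gives $\E_{x,i_0}W\big(\alpha(T\wedge\zeta_{k_0}\wedge\tau_R)\big)\le W(i_0)+C_RT$. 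On $\{\zeta_{k_0}\le T,\ \tau_R>T\}$ the stopping time equals $\zeta_{k_0}$ and $\alpha(\zeta_{k_0})\ge k_0$, hence $W\ge w_{k_0}:=\inf_{j\ge k_0}W(j)$ there, so $w_{k_0}\,\PP_{x,i_0}\{\zeta_{k_0}\le T,\ \tau_R>T\}\le W(i_0)+C_RT$. Since $w_{k_0}\to\infty$, choosing $k_0$ with $(W(i_0)+C_RT)/w_{k_0}<\eps/2$ and combining with the confinement estimate yields $\sup_{x\in D}\PP_{x,i_0}\{\zeta_{k_0}\le T\}<\eps$, the required bound.

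The hard part is the construction of $W$. Because the switches of $\alpha$ may be of unbounded size, the naive estimate that a bounded number of jumps forces a bounded level is unavailable, and one genuinely needs a function tending to infinity whose upward drift against the rates remains bounded; producing such a $W$ rests on the summability and continuity of $\{q_{ij}(x)\}_{j}$ over the compact ball $\{|x|\le R\}$, and this is also the step that secures the uniformity in $x\in D$, since $M_R$, $C_R$ and $w_{k_0}$ do not depend on the starting point. If a suitable $W$ is not directly at hand for a given rate field, an alternative is to first dominate the number of jumps of $\alpha$ on $[0,T]$ by a $\mathrm{Poisson}(M_RT)$ variable (uniformly in $x$ on $\{\tau_R>T\}$) and then iterate the embedded-chain tail masses $\sum_{j\ge k_0}q_{ij}(x)/q_i(x)$ over that many steps; the Dynkin route above, however, gives the cleaner uniform estimate.
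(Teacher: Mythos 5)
Your argument targets the evidently intended inequality $\PP_{x,i_0}\{\zeta_{k_0}\le T\}<\eps$ (the printed ``$\zeta_{k_0}>T$'' is a typo, as the use of the lemma in the proof of Lemma~\ref{lm3.5} confirms), and it takes a genuinely different route from the paper: the paper disposes of the lemma in one line, invoking the non-explosion result \cite[Theorem 4.5]{DY} for the switching component together with the Heine--Borel covering theorem to upgrade the pointwise statement to uniformity over the bounded set $D$, whereas you construct a self-contained quantitative proof (confine $X$ to a ball, then a Dynkin--Chebyshev estimate with a slowly growing Lyapunov function $W$ on $\N$). The skeleton is sound, and your fallback --- dominating the jump count by a Poisson$(M_RT)$ variable and iterating the embedded-chain tail masses over the finitely many source states reachable with high probability --- is essentially the mechanism behind the cited theorem of \cite{DY}. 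What your route buys is an explicit, reusable drift estimate; what the paper's citation buys is brevity and automatic coverage of both Assumptions~\ref{asp2.3} and \ref{asp2.4}.

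Two points need repair. First, the existence of $W$ is the entire content of the lemma, and you assert it rather than construct it. It does hold on the compact ball: since $q_i(x)=\sum_j q_{ij}(x)$ and all partial sums are continuous, Dini's theorem gives, for each fixed $i$, that $\sup_{|x|\le R}\sum_{j\ge N}q_{ij}(x)\to0$ as $N\to\infty$; one may then choose thresholds $N_1<N_2<\cdots$ recursively so that $\sup_{|x|\le R}\sup_{i<N_k}\sum_{j\ge N_{k+1}}q_{ij}(x)\le 2^{-k}$ (only finitely many $i$ are constrained at each stage) and set $W(j)=k$ for $j\in[N_k,N_{k+1})$, which yields a drift at any $i\in[N_{k-1},N_k)$ bounded by $\sum_{l\ge k}2^{-l}(l-k+1)\le C$ uniformly. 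Second, your parenthetical claim that under the global rate bound of Assumption~\ref{asp2.3}(iii) one may take $R=\infty$ and skip confinement is wrong: boundedness of $q_i$ does not give uniform decay of $\sup_{x}\sum_{j\ge N}q_{ij}(x)$ over an unbounded $x$-region (let $q_{1j}(x)$ be continuous bumps of height $1$ centered at points $x_j\to\infty$; then $q_1$ is continuous and bounded, yet every row tail has supremum $1$ for all $N$), and without uniform tail decay no $W\to\infty$ with uniformly bounded drift can exist. Note also that under Assumption~\ref{asp2.3} the confinement step itself cannot be run with a single Lyapunov function, since the $V_i$ and $C_i$ there are $i$-dependent; in that case you should fall back on the global rate bound to control the jump count and iterate the tail masses along the a.s.\ finite jump skeleton --- i.e., precisely your alternative route, which is how \cite{DY} proceeds. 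Under Assumption~\ref{asp2.4} your Dynkin argument goes through as written once the construction of $W$ above is inserted.
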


\begin{proof}
This lemma is a direct consequence of \cite[Theorem 4.5]{DY} and the Heine-Borel covering theorem.
\end{proof}
To proceed, we need the following lemma, which is a weak form of Harnack's principle.

\begin{lm}\label{lm3.5}
Let $D$ be an open bounded set in $\R^n$ with $\partial D\in C^2$ and fix $(x_0,i_0)\in\D\times\N$.
Let $B\subset\bar B\subset D$ be a ball centered at $x_0$.
Then for any $\eps>0$, there is a
$c_0=c_0(B, i_0, \eps)>0$ satisfying
$$
u(x,i_0)\leq c_0u(x_0, i_0)+\eps\sup_{\{(y,i)\in\partial D\times\N\}}\{u(y,i)\}, x\in \bar B
,$$
where
$\{u(x,i)\}\in \HB^p(D)$ satisfies
$$ \LL_i u(x,i)-q_i(x)u(x,i)+\sum_{j\in\N}  q_{ij}(x)u(x,j)= 0\text{ in } D\times\N.$$
\end{lm}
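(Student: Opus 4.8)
The plan is to collapse the coupled system to a single scalar elliptic equation for the $i_0$-th component, treat the switching as an inhomogeneous source, and then combine the classical (scalar) Harnack inequality with a comparison of Green's measures. Throughout I use the uniform ellipticity of $A(\cdot,i_0)$ on $\bar B$ (Assumption~\ref{asp3.1}(2)) and the boundedness of $u$ on $\bar D\times\N$ coming from $u\in\HB^p(D)$ and the compactness of $\bar D$. I treat the case $u\ge0$, which is the content of Harnack's inequality (note that constants solve the homogeneous system, since $\sum_j q_{ij}=q_i$).

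First I would record the global bound and the Feynman--Kac decomposition. The component $v:=u(\cdot,i_0)$ satisfies the scalar equation $\LL_{i_0}v-q_{i_0}v=-g$ in $D$ with $g(y):=\sum_{j\ne i_0}q_{i_0 j}(y)u(y,j)$ continuous and bounded. By the uniqueness in Lemma~\ref{lm3.1} and the representation \eqref{e10-lm3.1} (with source $f\equiv0$ and boundary data $u(\cdot,i_0)\big|_{\partial D}$), for $x\in\bar B$,
\[
u(x,i_0)=\underbrace{\E_{x,i_0}\Big[u(Y(\tau_D),i_0)\exp\Big(-\!\int_0^{\tau_D}q_{i_0}(Y(s))\,ds\Big)\Big]}_{=:v_h(x)}+\underbrace{\int_D g(y)\,G_{i_0}(x,dy)}_{=:v_p(x)},
\]
where $Y=Y^{x,i_0}$ is the frozen diffusion \eqref{e2-lm3.1} and $G_{i_0}(x,dy)$ is its occupation (Green) measure on $D$ killed at rate $q_{i_0}$. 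The same representation, applied with $g\ge0$ and boundary data $\le M:=\sup_{\partial D\times\N}u$, gives $0\le u\le M$ on all of $\bar D\times\N$; hence $g\ge0$ and, with $M_D$ as in \eqref{M_D}, $0\le g\le M_D M$. This uniform bound is exactly what tames the infinitely many discrete states.

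Next I would estimate the two pieces. The function $v_h$ is a nonnegative solution of $\LL_{i_0}v_h-q_{i_0}v_h=0$ in $D$; since $\bar B\subset\subset D$ and the coefficients are uniformly elliptic and bounded there, the classical interior Harnack inequality gives $\sup_{\bar B}v_h\le C_H v_h(x_0)\le C_H u(x_0,i_0)$, the last step because $v_p\ge0$ forces $v_h(x_0)\le u(x_0,i_0)$. For $v_p$ I would split $D=B(x,\rho)\cup(D\setminus B(x,\rho))$. On $D\setminus B(x,\rho)$ the map $y'\mapsto G_{i_0}(y',y)$ is nonnegative and $(\LL_{i_0}-q_{i_0})$-harmonic away from $y$, so the standard comparison of interior Green's functions (Harnack along a chain of balls in $\bar B$ avoiding $y$, together with the trivial estimate where $y$ is near $x_0$ and $G_{i_0}(x_0,y)$ is large) yields $G_{i_0}(x,dy)\le c_\rho G_{i_0}(x_0,dy)$ for $|y-x|\ge\rho$; integrating against $g$ gives $\int_{D\setminus B(x,\rho)}g\,G_{i_0}(x,dy)\le c_\rho v_p(x_0)\le c_\rho u(x_0,i_0)$. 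On $B(x,\rho)$ I use $g\le M_D M$ and $\int_{B(x,\rho)}G_{i_0}(x,dy)\le\E_{x,i_0}[\text{time in }B(x,\rho)\text{ before }\tau_D]=:\delta(\rho)$, with $\delta(\rho)\to0$ as $\rho\to0$; choosing $\rho$ so small that $M_D\,\delta(\rho)\le\eps$ bounds this part by $\eps M$. Adding the estimates and setting $c_0=C_H+c_\rho$ yields $u(x,i_0)\le c_0 u(x_0,i_0)+\eps M$ for every $x\in\bar B$; all constants depend only on $B,D,i_0,\eps$, so the bound is uniform on $\bar B$.

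The main obstacle is precisely the coupling through the infinitely many states, which enters only through $g=\sum_j q_{i_0 j}u(\cdot,j)$. Two facts make the argument close: (i) the maximum-principle bound $u\le M$ on $\bar D\times\N$ from the stochastic representation, which keeps $g$ uniformly bounded despite the infinite sum; and (ii) the near-diagonal behavior of $G_{i_0}(x,\cdot)$, where the comparison with $x_0$ necessarily breaks down—this is the term producing the $\eps M$ on the right-hand side, and it is controlled only because the occupation time of a small ball about $x$ is small. The remaining ingredients (interior Harnack and Green's-function comparison for the single uniformly elliptic operator $\LL_{i_0}-q_{i_0}$) are classical; the novelty is organizing them around the global $L^\infty$ bound so that the infinite discrete state space contributes no blow-up.
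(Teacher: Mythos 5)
Your argument is correct in outline but follows a genuinely different route from the paper. The paper first proves the full stochastic representation $u(x,i)=\E_{x,i}\,u(X(\widetilde\tau_D),\alpha(\widetilde\tau_D))$ (via It\^o's formula applied to the truncations $u_k$), then truncates \emph{in the discrete variable}: Lemmas \ref{lm3.3} and \ref{lm3.4} give a $k_1$ with $\PP_{x,i_0}\{\widetilde\tau_D>\xi_{k_1}\}<\eps$, so the paths with many switches contribute at most $\eps\sup_{\partial D\times\N}u$, while the remaining part $\tilde u$ is harmonic for a switching diffusion killed at the $k_1$-th switch, i.e.\ for a \emph{finite} weakly coupled elliptic system, to which the Chen--Zhao Harnack principle applies. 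You instead truncate \emph{in space} near the pole of the Green function: you freeze the component $i_0$, treat the coupling as a bounded nonnegative source $g=\sum_{j\ne i_0}q_{i_0j}u(\cdot,j)$, and combine the scalar interior Harnack inequality for $\LL_{i_0}-q_{i_0}$ with a Green-measure comparison $G_{i_0}(x,dy)\le c_\rho G_{i_0}(x_0,dy)$ off the diagonal and an occupation-time (or ABP-type) bound $\int_{B(x,\rho)}G_{i_0}(x,dy)\le C\rho$ on the diagonal. Your route avoids both Lemma \ref{lm3.4} and the Harnack principle for weakly coupled systems, at the cost of having to write out two classical but nontrivial scalar facts (the Green-function comparison along Harnack chains avoiding the pole, and the smallness of the occupation time of a small ball); the paper's route outsources the hard analytic step to \cite{CZ1} and keeps the $\eps$-error probabilistically transparent. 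Both proofs ultimately rest on the same two structural inputs: nonnegativity of $u$ and the uniform bound $\sup_{\bar D\times\N}u<\infty$, which is what prevents the infinite state space from causing blow-up.

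Two points in your write-up should be shored up. First, the hypothesis $u\ge0$ is not in the statement of the lemma; it is needed (by both proofs --- the Chen--Zhao principle also concerns nonnegative solutions) and holds in the application to $v_{k,m}=u_k-u_m$ in Theorem \ref{thm3.1}, but you should state it as an explicit assumption rather than a remark. Second, your derivation of $u\le M:=\sup_{\partial D\times\N}u$ on $\bar D\times\N$ is circular as written, since $v_p$ involves the unknown components $u(\cdot,j)$; the clean way is to first establish the full representation $u(x,i)=\E_{x,i}\,u(X(\widetilde\tau_D),\alpha(\widetilde\tau_D))$ exactly as in \eqref{e1-lm3.5} (using Lemma \ref{lm3.3} and dominated convergence), from which $0\le u\le M$ is immediate, and only then pass to your scalar decomposition $u(\cdot,i_0)=v_h+v_p$. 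With these repairs the argument goes through.
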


\begin{proof}
Let $\phi(x,i)=u(x,i)|_{\partial D}$
and
$$\zeta_k=\inf\{t>0: \alpha(t)\geq k\}.$$
Let $$
u_k(x,i)=
\begin{cases}u(x,i)\,&\text{ if } i<k\\
0&\text{ if } i\geq k.
\end{cases}
$$
By It\^o's formula,
\begin{equation}\label{e5-lm3.5}
\begin{aligned}
\E_{x,i}& u_k(X(\widetilde\tau_D\wedge\zeta_k\wedge t),\alpha(\widetilde\tau_D\wedge\zeta_k\wedge t))\\
=&
u_k(x,i)+\E_{x,i}\int_0^{\widetilde\tau_D\wedge\zeta_k\wedge t}\LL u_k(X(s),\alpha(s))ds\\
=&u_k(x,i)-\E_{x,i}\int_0^{\widetilde\tau_D\wedge\zeta_k\wedge t}\sum_{j\geq k}q_{\alpha(s),j}(X(s))u_k(X(s),j)ds.\\
\end{aligned}
\end{equation}
Letting $k\to\infty$ and then $t\to\infty$,
we obtain from the dominated convergence theorem that
\begin{equation}\label{e1-lm3.5}
u(x,i)=\E_{x,i} \phi(X(\widetilde\tau_D),\alpha(\widetilde\tau_D))
.\end{equation}
As a result of Lemmas \ref{lm3.3} and \ref{lm3.4},
there is a $k_1=k_1(i_0,\eps)\in\N$ such that
\begin{equation}\label{e2-lm3.5}
\PP_{x,i}\{\widetilde\tau_D>\xi_{k_1}\}<\eps.
\end{equation}
In view of \eqref{e1-lm3.5} and \eqref{e2-lm3.5},
\begin{equation}\label{e3-lm3.5}
\begin{aligned}
u(x,i_0)=&\E_{x,i_0} \1_{\{\widetilde\tau_D<\xi_{k_1}\}}\phi(X(\widetilde\tau_D),\alpha(\widetilde\tau_D))+\E_{x,i_0} \1_{\{\widetilde\tau_D>\xi_{k_1}\}}\phi(X(\widetilde\tau_D),\alpha(\widetilde\tau_D))\\
\leq &\E_{x,i_0} \1_{\{\widetilde\tau_D<\xi_{k_1}\}}\phi(X(\widetilde\tau_D),\alpha(\widetilde\tau_D))+\eps\sup_{(y,j)\in\partial D\times\N}\{\phi(y,j)\}.
\end{aligned}
\end{equation}
Let
$$\tilde u(x,i)=\E_{x,i} \1_{\{\widetilde\tau_D<\xi_{k_1}\}}\phi(X(\widetilde\tau_D),\alpha(\widetilde\tau_D))
$$
for $i<k$.
The process $\{(X(t),\alpha(t)), 0\leq t<\xi_{k_1}\}$
can be considered as a switching diffusion process on $\R^n\times\{1,\dots,{k_1}-1\}$
with lifetime $\xi_{k_1}$.
Its generator is
$$\tilde \LL_i f(x,i)=\LL_if(x,i)-q_i(x)u(x,i)+\sum_{j<k_1}q_{ij}(x)f(x,j),$$
for $i=1,\dots,k-1$.
Then \cite[Theorem 3.6]{CZ2} reveals that
$\tilde u(x,i)$ satisfying
\begin{equation}
\begin{cases}
&\tilde \LL_i \tilde u(x,i)=0\text{ in } D\times\{1,\dots,{k_1}-1\}\\
&\tilde u(x,i)\big|_{\partial D}=\phi(x,i) \text{ on }\partial D\times\{1,\dots,{k_1}-1\}.
\end{cases}
\end{equation}
By the Harnack principle for weakly coupled elliptic systems (see e.g., \cite{CZ1}),
there is a $c_0=c_0(k_1)$
such that
\begin{equation}\label{e6-lm3.5}
\tilde u(x,i_0)\leq c_0\tilde u(x_0,i_0)\leq c_0 u(x_0,i_0)
\end{equation}
The desired result follows from \eqref{e3-lm3.5} and \eqref{e6-lm3.5}.
\end{proof}

\begin{rmk}{\rm
In \eqref{e13-lm3.1} and \eqref{e3-lm3.1}, we apply the Feyman-Kac formula
for functions in the class $W^{2,p}_{loc}(D)\cap C(\bar D)$ rather than $C^2(D)$.
Feyman-Kac formula is proved
using It\^o's formula, which is usually stated for $C^2$-functions.
However, It\^o's formula also holds for diffusion processes with functions in $W^{2,p}_{loc}(D)\cap C(\bar D)$ when $p>n$.
The proof for this claim can be found in \cite[Theorem 2.10.2]{NK}.
With a careful consideration, we can generalize the result for diffusion processes to switching diffusion processes in which
the switching  has a finite state space.
Thus, \eqref{e5-lm3.5} holds as long as $u(\cdot,i)\in W^{2,p}_{loc}(D)\cap C(\bar D)$.
}\end{rmk}

\begin{proof}[Proof of Theorem {\rm\ref{thm3.1}}]
Let $k_0\in\N$ sufficiently large such that
$D_1\subset \{x\in\R^n: |x|< k_0\}$.
For $k>k_0$,
define
$D_k=D\cap  \{x\in\R^n: |x|< k\}$.
By \eqref{e1-lm3.5},
$u_k(x,i):=\E_{x,i}\widetilde\tau_{D_k}$
satisfies the equation
\begin{equation}
\begin{cases}
&\LL_i u_k(x,i)-q_i(x)u_k(x,i)+\sum_{j\in\N} q_{ij}(x)u_k(x,j)=-1\text{ in } D_k\times\N\\
&u_k(x,i)\big|_{\partial D}=1 \text{ on }\partial D_k\times\N.
\end{cases}
\end{equation}
Let $B_1\subset B_2$ be two balls in $D$
and
fix $(x_0,i_0)\in B_1\times\N$
and let $k_1>k_0$ be such that
$B_2\subset D_{k_1}$.
Suppose that $\E_{x,i}\widetilde\tau_D<M$ for any $(x,i)\in B_2\times\N$.
Then $u_k(x,i)<M$ for $k>k_0$ and $(x,i)\in B_2\times\N$.
Let $v_{k,m}=u_k(x,i)-u_{m}(x,i)$ for $k>m>k_1$,
we have
\begin{equation}
\LL_i v_{k,m}(x,i)-q_i(x)v_{k,m}(x,i)+\sum_{j\in\N} q_{ij}(x)v_{k,m}(x,j)=0\text{ in } B_2\times\N.
\end{equation}
By Lemma \ref{lm3.5},
for any $\eps>0$, there is a $c_0>0$ such that
\begin{equation}\label{e3-thm3.1}
\begin{aligned}
v_{k,m}(x,i_0)\leq& c_0v_{k,m}(x_0,i_0)+\eps\sup\{v_{k,m}(y,j): (y,j)\in B_2\times\N\}\\
\leq& c_0v_{k,m}(x_0,i_0)+M\eps\,\text{ for any } x\in B_1.
\end{aligned}
\end{equation}
For any $\eps>0$,
since $u_k(x_0,i_0)=\E_{x_0,i_0}\widetilde\tau_{D_k}\to \E_{x_0,i_0}\widetilde\tau_{D}$ as $k\to\infty$,
there exists $k_2=k_2(\eps)$
such that
$c_0v_{k,m}(x_0,i_0)=c_0[u_k(x_0,i_0)-u_m(x_0,i_0)]<\eps$
for any $k>m>k_2$.
In view of \eqref{e3-thm3.1},
\begin{equation}
v_{k,m}(x,i_0)\leq (M+1)\eps\,\text{ for any } (x,i_0)\in B_1\times\N, k>m>k_2
\end{equation}
Thus,
$u_k(x,i_0)$ converges uniformly in each compact subset of $D$.
The limit $u(x,i_0)$ is therefore continuous for any $i_0$.
Now, let  $\phi(x,i)=u(x,i)|_{\partial B_2}$.
Since $\phi(x,i)$ is continuous and uniformly bounded,
by Lemma \ref{lm3.1},
for each $i\in \N$,
there exists $\{\tilde u(x,i)\}\in\HB^p(B_2)$ satisfying
\begin{equation}
\begin{cases}
&\LL_i \tilde u(x,i)-q_i(x)\tilde u_k(x,i)+\sum_{j\in\N} q_{ij}(x)\tilde u(x,j)=-1\text{ in } B_2\times\N\\
&\tilde u(x,i)\big|_{\partial D}=\phi(x,i) \text{ on }\partial B_2\times\N.
\end{cases}
\end{equation}
Similar to \eqref{e1-lm3.5},
by applying It\^o's formula we have that
$$
\begin{aligned}
\tilde u(x,i)=&\E_{x,i}\widetilde\tau_{B_2}+\E_{x,i}\phi(X(\widetilde\tau_{B_2}),\alpha(\widetilde\tau_{B_2}))\\
=&\E_{x,i}\widetilde\tau_{B_2}+\E_{x,i}\E_{X(\widetilde\tau_{B_2}),\alpha(\widetilde\tau_{B_2})}\widetilde\tau_D\\
=&\E_{x,i}\widetilde\tau_D \,\text{ (due to the strong Markov property)}\\
=&u(x,i).
\end{aligned}
$$ The proof is concluded.
\end{proof}

\begin{proof}[Proof of Theorem {\rm\ref{thm3.2}}]
After having Lemma \ref{lm3.5}, we adapt the proof of \cite[Theorem 3.10]{RK}
to obtain the desired result.
First,
suppose that \eqref{e1-thm3.2} has a unique solution in $\HB^p(D)$ for some $p>0$
given that $f(x,i)$ is continuous and bounded on $D\times\N$.
We define $D_k$ as in the proof of Theorem \ref{thm3.1}
and  $v_k(x,i):=\PP_{x,i}\{X(\widetilde\tau_{D_k})\in\partial D\}$.
By \eqref{e1-lm3.5}, $v_k(x,i)$ is the strong solution to
\begin{equation}
\begin{cases}
&\LL_i v_k(x,i)-q_i(x)v_k(x,i)+\sum_{j\in\N} q_{ij}(x)v_k(x,j)=0\text{ in } D_k\times\N\\
&v_k(x,i)\big|_{\partial D}=1 \text{ on }\partial D\times\N.\\
&v_k(x,i)\big|_{\partial D}=0 \text{ on }\{y\in\R^n:|y|=k\}\times\N.
\end{cases}
\end{equation}
By the definition of $v_k(x,i)$, we have that
\begin{equation}
\lim_{k\to\infty}v_k(x,i)=v(x,i):=\PP_{x,i}\{\widetilde\tau_D<\infty\}
\end{equation}
On the other hand, owing to Lemma \ref{lm3.5},
we can use arguments in the proof of Theorem \ref{thm3.1}
to show that $\{v(x,i)\}\in\HB^p(D)$
is the solution to
\begin{equation}\label{e3-thm3.2}
\begin{cases}
&\LL_i v(x,i)-q_i(x)v(x,i)+\sum_{j\in\N} q_{ij}(x)v(x,j)=0\text{ in } D\times\N\\
&v(x,i)\big|_{\partial D}=1 \text{ on }\partial D\times\N.
\end{cases}
\end{equation}
Clearly, $v(x,i)\equiv 1$ is the solution to \eqref{e3-thm3.2}.
By the uniqueness of solutions among the class $\HB^p(D)$,
we have
$$\PP_{x,i}\{\widetilde\tau_D<\infty\}=v(x,i)\equiv 1.$$

Now, suppose that $\PP_{x,i}\{\widetilde\tau_D<\infty\}\equiv 1$
and \eqref{e1-thm3.2} has two solutions $\{v^{(1)}(x,i)\}$
and $\{v^{(2)}(x,i)\}$
for the same $f(x,i)$ being continuous and bounded in $\partial D\times\N$.
Let $v^{(3)}(x,i)=v^{(1)}(x,i)-v^{(2)}(x,i)$.
Then $\{v^{(3)}(x,i)\}\in\HB^p(D)$
and satisfies
\begin{equation}\label{e3-thm3.2}
\begin{cases}
&\LL_i v^{(3)}(x,i)-q_i(x)v^{(3)}(x,i)+\sum_{j\in\N} q_{ij}(x)v^{(3)}(x,j)=0\text{ in } D\times\N\\
&v^{(3)}(x,i)\big|_{\partial D}=0 \text{ on }\partial D\times\N.
\end{cases}
\end{equation}
Let $M^{(3)}=\sup_{(x,i)\in D\times\N}\{v^{(3)}(x,i)\}$.
In view of \eqref{e5-lm3.5}, for $k>k_0\vee|x|$, we have
$$|v^{(3)}(x,i)|=\left|\E_{x,i}\1_{\{|X(\widetilde\tau_{D_k})|=k\}}v^{(3)}(X(\widetilde\tau_{D_k}),\alpha(\widetilde\tau_{D_k}))\right|
\leq M^{(3)}[1-\PP_{x,i}\{X(\widetilde\tau_{D_k})\in\partial D\}]
$$
Letting $k\to\infty$ and using
$\PP_{x,i}\{X(\widetilde\tau_{D_k})\in\partial D\}\to \PP_{x,i}\{\widetilde\tau_D<\infty\}=1$ as $k\to\infty$,
we obtain  $v^{(3)}(x,i)\equiv 0$.
\end{proof}

\appendix
\section{Appendix}\label{sec:apd}
Let $Y^{x,i}(t)$
be the solution to
\begin{equation}\label{e1-A}
dY(t)=b(Y(t), i)dt+\sigma(Y(t),i)dW(t),\ t\geq0
\end{equation}
with initial condition $(x,i)\in\R^n\times\N$.
For $(\phi,i)\in\C\times\N$,
we denote by $Y^{\phi,i}(t), t\geq -r$
be the process satisfying
$Y^{\phi,i}(t)=\phi$ if $t\in[-r,0]$
and $Y^{\phi,i}(t)$ solves \eqref{e1-A} for $t>0$.
Clearly $Y^{\phi,i}(t)=Y^{\phi(0),i}(t)$ for $t\geq0$.
Let $\beta^{\phi,i}$ be the solution to
\begin{equation}\label{e2-A}
\beta^{\phi,i}(t)=i+\int_0^t\int_{\R}h(Y^{\phi,i}_t,\beta^{\phi,i}(t-), z)\p(dt, dz),t\geq0
\end{equation}
satisfying
 $Y^{\phi,i}(t)=\phi(t)$ in $[-r,0]$ and $\beta^{\phi,i}(0)=i$.
Let $\xi^{\phi,i}_1(t)$ and $\lambda^{\phi,i}_1(t)$ be the first jump times
of $\alpha^{\phi,i}(t)$ and $\beta^{\phi,i}(t)$, respectively.
Clearly we have that
\begin{equation}\label{e3-A}
X^{\phi,i}(t)=Y^{\phi,i}(t),\,\alpha^{\phi,i}(t)=\beta^{\phi,i}(t)\,\text{ up to }\, \xi^{\phi,i}_1(t)=\lambda^{\phi,i}_1(t).
\end{equation}

\begin{proof}[Proof of Lemma {\rm\ref{lm2.2}}]
Since $q_{ij}(\cdot)$ is continuous, there is an $\eps\in(0,1)$ such that $q_{ij}(\psi)>0$
given that $\|\psi-\phi\|<\eps$.
Let $M_{\phi}=\sup_{\psi\in\C, \|\psi-\phi\|<1}\{q_i(\psi)\}<\infty$.
Let $\delta_1>0$ such that
\begin{equation}\label{e2-lm2.2}
|\phi(s)-\phi(s')|<\dfrac{\eps}5\,\text{ provided }\,|s-s'|<\delta_1, s,s'\in[-r,0].
\end{equation}
Under either Assumption \ref{asp2.3} or Assumption \ref{asp2.4}, standard arguments show that there exists a sufficiently small $\delta_2\in(0,\delta_1]$ satisfying
\begin{equation}\label{e3-lm2.2}
\PP_{\psi,i}\left\{|Y(t)-\psi(0)|\leq\dfrac{\eps}5\,\forall t\in[0,\delta_2]\right\}\geq\dfrac12,\,\forall \psi\in\C, \|\psi-\phi\|<\eps,
\end{equation}
and
\begin{equation}\label{e4-lm2.2}
\PP_{\psi',j}\left\{|Y(t)-\psi'(0)|\leq\dfrac{\eps}5\,\forall t\in[0,\delta_2]\right\}\geq\dfrac12,\,\forall \psi'\in\C, \|\psi'-\phi\|<\eps.
\end{equation}
In view of \eqref{e2-lm2.2},
it can be checked  that
\begin{equation}\label{e2.7}
\|Y^{\psi,i}_t-\phi\|\leq\dfrac{3\eps}5\,\forall t\in[0,\delta_2] \text{ if }|Y^{\psi,i}(t)-\psi(0)|\leq\dfrac{\eps}5\,\forall t\in[0,\delta_2]\text{
and }\|\psi-\phi\|<\dfrac{\eps}5
\end{equation}
and
\begin{equation}\label{e2.8}
\|Y^{\psi',j}_t-\phi\|\leq\eps\,\forall t\in[0,\delta_2] \text{ if }|Y^{\psi',j}(t)-\psi'(0)|\leq\dfrac{\eps}5\,\forall t\in[0,\delta_2]\text{
and }\|\psi'-\phi\|<\dfrac{3\eps}5.
\end{equation}
By virtue  of \eqref{e3-lm2.2}, \eqref{e2.7}, and  \cite[Lemma 4.2]{DY}, for $\psi\in\C$, $\|\psi-\phi\|<\dfrac{\eps}5$ we have
\begin{equation}\label{e2.9}
\begin{aligned}
& \!\!\! \PP_{\psi,i}\left\{\|Y_{\lambda_1}-\phi\|\leq\dfrac{3\eps}5\text{ and } \lambda_1<\delta_2, \beta(\lambda_1)=j\right\}\\
& \ =\E_{\psi,i} \int_0^{\delta_2} \1_{\{\|Y_t-\phi\|\leq\frac{3\eps}5\}}q_{i,j}(Y_t)\exp\left(-\int_0^tq_i(Y_s)ds\right)dt\\
& \ \geq \E_{\psi,i} \left[\1_{\{|Y(u)-\psi(0)|\leq\frac\eps5\,\forall u\in[0,\delta_2]\}}\int_0^{\delta_2} q_{i,j}(Y_t)\exp\left(-\int_0^tq_i(Y_s)ds\right)dt\right]\\
& \ \geq \dfrac{\delta_2}2 \inf_{\phi'\in\C:\|\phi'-\phi\|\leq\frac{3\eps}5}\big\{q_{i,j}(\phi')\big\}\times\inf_{\phi'\in\C:\|\phi'-\phi\|\leq\frac{3\eps}5}\left\{\exp\left(-\int_0^{\delta_2}q_i(\phi'(s))ds\right)dt\right\}:=p_1>0.
\end{aligned}
\end{equation}
Now, we have from the Markov property that
\begin{equation}\label{e2.10}
\begin{aligned}
\PP_{\psi,i}&\{\|X_{\delta_2}-\phi\|<\eps, \alpha(\delta_2)=j\}\\
&\geq \PP_{\psi,i}\left\{\xi_1<\delta_2, \alpha_1=j, \|X_{\xi_1}-\phi\|<\frac{3\eps}5\right\}\\
&\qquad\qquad\times \PP_{\psi,i}\Big\{\|X_{\delta_2}-\phi\|<\eps, \xi_2>\delta_2\Big|\xi_1<\delta_2,\alpha_1=j, \|X_{\xi_1}-\psi\|<\eps\Big\}.
\end{aligned}
\end{equation}
By \eqref{e3-lm2.2} and \eqref{e2.8},
if $\|\psi'-\phi\|\leq\dfrac{3\eps}5$, then
\begin{equation}\label{e2.11}
\begin{aligned}
& \!\PP_{\psi',j}\{\|X_{t}-\phi\|<\eps\,\forall\, t\in[0,\delta_2], \xi_1>\delta_2\}\\
&\quad = \PP_{\psi',j}\{\|Y_{t}-\phi\|<\eps\,\forall\, t\in[0,\delta_2], \lambda_1>\delta_2\}\\
&\quad\geq \E_{\psi',j}\left[1_{\{\|Y_{t}-\phi\|<\eps\,\forall t\in[0,\delta_2]\}}\exp\left(-\int_0^{\delta_2}q_{j}(Y_s)ds\right)\right]\\
&\quad \geq \PP_{\psi',j}\left\{|Y(t)-\psi'(0)|\leq\dfrac{\eps}5\,\forall t\in[0,\delta_2]\right\}\times\inf_{\phi'\in\C:\|\phi'-\phi\|\leq\eps}\left\{\exp\left(-\int_0^{\delta_2}q_i(\phi'(s))ds\right)\right\}\\
&\quad :=p_2>0.
\end{aligned}
\end{equation}
By the strong Markov property of $(X_t,\alpha(t))$,  applying estimates \eqref{e2.9} and \eqref{e2.11} to \eqref{e2.10}, we obtain
$$\sup_{\psi\in\C:\|\psi-\phi\|\leq\frac\eps5}\PP_{\psi,i}\{\|X_{\delta_2}-\phi\|<\eps, \alpha(\delta_2)=j\}>p_1p_2>0.$$
\end{proof}

\begin{proof}[Proof of Lemma {\rm\ref{lm2.6}}]
Using the Kolmogorov-Centsov theorem \cite[Theorem 2.8]{KS}, for each $i\in\N$ and $R>0$, there exists an $h_{i,R}>0$ such that
\begin{equation}\label{e2-lm2.6}
\PP_{\phi,i}\left\{\sup\limits_{t,s\in[0,r],0<t-s<h_{i,R}}\dfrac{|Y(t)-Y(s)|}{(s-t)^{0.25}}\leq 4\right\}>\dfrac12 ,\ \forall\,\|\phi\|\leq R.
\end{equation}
The detailed justification was given in the proof of \cite[Lemma 4.5]{DY}.
Let $$\A=\left\{\phi\in\C: |\phi(-r)|\leq R, \sup\limits_{t,s\in[-r,0],0<t-s<h_{i,R}}\dfrac{|\phi(t)-\phi(s)|}{(s-t)^{0.25}}\leq 4\right\}.$$
Let $R'>R$ such that $\|\phi\|<R'$ for any $\phi\in\A$
and $M_{R'}=\sup_{\|\phi\|<R'}\{q_i(\phi)\}<\infty$.
For $\|\phi\|\leq R$,
we have that
$$
\begin{aligned}
\PP_{\psi,i}\{X_r\in\A, \xi_1>r\}=&\PP_{\psi,i}\{Y_r\in\A, \lambda_1>r\}\\
=&\E_{\psi,i}\left[\1_{\{Y_r\in\A\}}\int_0^r\exp(-q_i(Y_t))dt\right]\\
\geq &\E_{\psi,i}\1_{\{Y_r\in\A\}}\exp(-r M_{R'})\\
\geq &0.5\exp(-r M_{R'}),
\end{aligned}
$$
which implies \eqref{e0-lm2.6}.

To prove \eqref{e1-lm2.6}, note that $\sup_{(x,i)\in \R^n\times\N}\E_{x,i}\tau_{k}<\infty$
where $\tau_{k}=\inf\{t\geq0: |Y(t)|\geq k\}$, (see e.g., \cite[Corrolary 3.3]{RK} or \cite[Theorem 3.1]{ZY}).
Thus, there is a $T>0$ such that
$$\PP_{x,i}\Big\{\tau_{k}<T\Big\}>\frac{1}{2},\;\forall\,x\in\R^n.$$
Denote $\widetilde\tau_{k}=\inf\{t\geq0: \|X_t\|\geq k\}$.
For $\phi\in\C$ with $\|\phi\|\leq R<k$ we have from \eqref{e3-A} and \cite[Lemma 4.2]{DY} that
$$
\begin{aligned}
\PP_{\phi,i}\Big\{\widetilde\tau_k< T\Big\}\geq&\PP_{\phi,i}\Big\{\widetilde\tau_k<T, \alpha(t)=i\,\text{ for } t\in[0,\widetilde\tau_k)\Big\}\\
=&\PP_{\phi,i}\Big\{\tau_k<T, \beta(t)=i\,\text{ for } t\in[0,\tau_k)\Big\}\\
=&\E_{\phi,i}\left[\1_{\{\tau_k<T\}}\exp\left(-\int_0^{\tau_k}q_i(Y_s)ds\right)\right]\\
\geq&\exp\left(-M_kT\right)\E_{\phi,i}\1_{\{\tau_k<T\}}\geq 0.5\exp\left(-M_kT\right),
\end{aligned}
$$
where $M_k=\sup_{\|\phi\|<k}\{q_i(\phi)\}<\infty$.
The proof is therefore complete.
\end{proof}
We need an auxiliary lemma to obtain Lemma \ref{lm2.4}.
\begin{lm}\label{lm2.3} Fix $i\in\N$ and suppose $A(x, i)$ is elliptic uniformly in each compact subset of $\R^n$.
For $D$ be a bounded open set in $\R^n$ and $K_1, K_2$ be open sets whose closures are contained in $D$.
Then
\begin{equation}\label{e1-lm2.3}
\inf_{\{\phi\in\C: \phi(0)\in K_1\}}\PP_{\phi,i}\big(\{Y(T)\in K_2\}\cap\{Y(t)\in D\forall t\in[0,T]\}\big)>0
\end{equation}
and there is
a  measure $\bnu$ on $\fB(\C)$
such that
$$\PP_{\phi,i}\{Y_{T+r}\in \B, Y(t)\in D, t\in[0,T+r]\}\geq \bnu(\B).$$
Moreover, if $\B\subset\{\phi\in\C:\phi(t)\in D, t\in[-r,0]\}$
is an open set of $\C$, then
$\bnu(\B)>0$.
\end{lm}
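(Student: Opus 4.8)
The plan is to reduce everything to two standard facts about the non-degenerate diffusion $Y=Y^{\phi,i}$ killed on exiting $D$: the existence and positivity of its transition density, and the Stroock--Varadhan support theorem. Since $\bar D$ is compact, on $\bar D$ the coefficients $b(\cdot,i),\sigma(\cdot,i)$ are bounded and Lipschitz and $A(\cdot,i)$ is uniformly elliptic, so the sub-Markov semigroup of $Y$ killed at $\partial D$ admits a transition density $p_D(t,x,y)$ that is jointly continuous and strictly positive on $(0,\infty)\times D\times D$. Recalling that $Y^{\phi,i}(t)=Y^{\phi(0),i}(t)$ for $t\ge 0$, the first assertion is immediate:
\[
\PP_{\phi,i}\big(\{Y(T)\in K_2\}\cap\{Y(t)\in D,\,t\in[0,T]\}\big)=\int_{K_2}p_D(T,\phi(0),y)\,dy .
\]
Because $\overline{K_1}\times\overline{K_2}$ is a compact subset of $D\times D$, the density $p_D(T,\cdot,\cdot)$ is bounded below by some $c_0>0$ there, whence the integral is at least $c_0\,|K_2|>0$, uniformly in $\phi(0)\in K_1$.

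For the second assertion I would condition at time $T$ via the Markov property. Writing $g_{\B}(y)=\PP_{y,i}\{Y_r\in\B,\ Y(u)\in D\ \forall u\in[0,r]\}$ and splitting the confinement event across $[0,T]$ and $[T,T+r]$, one obtains
\[
\PP_{\phi,i}\{Y_{T+r}\in\B,\ Y(t)\in D,\ t\in[0,T+r]\}=\int_D p_D(T,\phi(0),y)\,g_{\B}(y)\,dy .
\]
Setting $g(y)=\inf_{x\in\overline{K_1}}p_D(T,x,y)$, which is positive on $D$, I would define $\bnu(\B):=\int_D g(y)\,g_{\B}(y)\,dy$. Since $\B\mapsto g_{\B}(y)$ is a sub-probability measure for each $y$ and $\int_D g(y)\,dy\le 1$, Tonelli's theorem shows $\bnu$ is a finite measure on $\fB(\C)$, and the displayed identity together with $p_D(T,\phi(0),y)\ge g(y)$ for $\phi(0)\in K_1$ yields the desired minorization $\PP_{\phi,i}\{\cdots\}\ge\bnu(\B)$.

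It remains to show $\bnu(\B)>0$ for every nonempty open $\B\subset\{\phi\in\C:\phi(t)\in D,\ t\in[-r,0]\}$, and this is the main obstacle, since it concerns the law of the whole segment in sup-norm rather than its finite-dimensional marginals. Because $g>0$ on $D$, it suffices to exhibit a set of positive Lebesgue measure of starting points $y$ with $g_{\B}(y)>0$. Fix $\psi\in\B$ together with an $\eps$-ball around it contained in $\B$; the compact curve $\psi([-r,0])$ lies at a positive distance from $\partial D$, so a sufficiently thin tube around $s\mapsto\psi(s-r)$ stays in $D$ on $[0,r]$. By the support theorem, for every $y$ close enough to $\psi(-r)$ the diffusion started at $y$ remains in that tube on $[0,r]$ with positive probability, which forces $Y_r\in\B$ while staying in $D$; hence $g_{\B}(y)>0$ on a neighborhood of $\psi(-r)\in D$, and $\bnu(\B)>0$. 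Alternatively, the tube estimate can be produced self-containedly by discretizing $[0,r]$, chaining the first assertion across the grid via the Markov property, and using a Kolmogorov--Centsov modulus bound as in Lemma \ref{lm2.6} to control the path between grid points.
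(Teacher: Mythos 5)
Your proposal is correct, and its probabilistic skeleton is the same as the paper's: a uniform lower bound for the killed diffusion reaching $K_2$, a minorization of the time-$T$ killed law by a fixed measure uniformly over initial points, an extension to segment laws by applying the Markov property over $[T,T+r]$, and the Stroock--Varadhan support theorem (via a tube around a path in $\B$) for the final positivity claim. Where you differ is in the analytic input: you package everything as existence, joint continuity, and strict positivity of the Dirichlet heat kernel $p_D(t,x,y)$, taking $g(y)=\inf_{x\in\overline{K_1}}p_D(T,x,y)$ and $\bnu(\B)=\int_D g(y)\,g_\B(y)\,dy$, whereas the paper works with the PDE solution $u_g(t,x)$ for the backward equation killed on $\partial D$, gets positivity from the strong maximum principle, and gets the minorization from the parabolic Harnack inequality, which lets it take $\bnu$ to be (a constant times) the pushforward of the time-$T/2$ killed law from a single reference point $x_0$ composed with the segment kernel. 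The two routes are essentially equivalent in depth --- heat-kernel positivity and interior continuity are themselves usually derived from the maximum principle and Harnack --- but yours avoids the slightly delicate bookkeeping of the Harnack comparison between two different times and base points, at the cost of invoking the killed heat kernel on a bounded open set with no boundary regularity as a black box (existence follows by domination by the free kernel after modifying the coefficients off $\bar D$; interior continuity and positivity by interior estimates and chaining, so this is legitimate). Both arguments, yours and the paper's, implicitly require $D$ to be connected for \eqref{e1-lm2.3} and for strict positivity of $p_D$ on $D\times D$; this is harmless since the lemma is only applied with $D$ a ball. Your final tube argument for $\bnu(\B)>0$ is sound: it only needs $g_\B>0$ on a set of positive Lebesgue measure, which is in fact marginally cleaner than the paper's use of the open set $\{\phi(-r):\phi\in\B\}$.
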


\begin{proof}
For a bounded continuous function $f(x):\R^n\mapsto\R$
vanishing outside $K_2$,
let $u_f(t, x)$ be the solution to
\begin{equation}
\begin{cases}
\dfrac{\partial u}{\partial t}+\LL_iu=0 \text{ in } D\times[0,T)\\
u\left(T,x\right)=f(x)\text{ on } D,\\
u(t, x)=0\text{ on } \partial D\times\left[0,T\right].
\end{cases}
\end{equation}
It is well known (see, e.g., \cite[Theorem 2.8.2]{XM}) that
$$u_f(t, x)=\E_{x,i}\left[ f\big(Y(T-t)\big)\1_{\left\{Y(s)\in D\,\forall s\in[0,T-t]\right\}}\right].$$

Let $g$ be a continuous function in $D$ such that $0\leq g(x)\leq 1\,\forall x\in D$, $g(x)=0$ outside $K_2$ and $g(x)>0$ for some $x\in K_2$.
By the strong maximum principle for parabolic equations (see \cite[Theorem 7.12]{LE}),
$u_g(0, x)>0$ for all $x\in D$, which  implies that
\begin{equation}\label{e2.13}
u_C:=\inf\{u_g(0, x): x\in K_1\}>0.
\end{equation}
By the definition of $g(\cdot)$, we can obtain that
\begin{equation}\label{e2.14}
\PP_{x,i}\left\{Y(T)\in K_2, Y(s)\in D\,\forall s\in[0,T]\right\}\geq u_g(0, x)\,\forall x\in D.
\end{equation}
The first desired result follows  from \eqref{e2.13} and \eqref{e2.14}.
Moreover, in view of Harnack's inequality (see  \cite[Theorem 7.10]{LE}),
there is $\tilde\rho_i>0$ such that
$u_f(y, T)\geq \tilde\rho_i u_f(x_0,\frac{T}2)$ for all $y\in K_2$
and $f$ being bounded continuous.
Thus
$$
\E_{x,i}\left[ f\big(Y(T)\big)\1_{\left\{Y(s)\in D\,\forall s\in[0,T-t]\right\}}\right]\geq \rho_i\E_{x_0,i}\left[ f\left(Y\left(0.5T\right)\right)\1_{\left\{Y(s)\in D\,\forall s\in[0,T-t]\right\}}\right]
$$
for any bounded and continuous function $f$.
Thus, we  obtain that
$$
\begin{aligned}
\PP_{x,i}&\left\{ Y(T)\in B \,\text{ and }\, Y(s)\in D\,\forall s\in[0,T]\right\}\\
\geq& \tilde\rho_i\PP_{x_0,i}\left\{ Y\left(0.5T\right)\in B,\,\text{ and }\, Y(s)\in D\,\forall s\in[0,0.5]\right\}\\
\geq&\rho_i\tilde\nu(B)
\end{aligned}
$$
for any Borel set $B$,
where
$$\nu(\cdot)=\PP_{x_0,i}\left\{ Y(0.5T)\in \cdot,\,\text{ and }\, Y(s)\in D\,\forall s\in[0,0.5T]\right\}
$$
and
$\rho_i=\tilde\rho_i \PP_{x_0,i}\left\{ Y(s)\in D\,\forall s\in[0,0.5T]\right\},$
which is positive due to \eqref{e1-lm2.3}.
Denote $\hat\D=\{\phi\in\C: \phi(t)\in D, t\in[-r,0]\}$.
For any Borel set $\B\subset\C$,
we have from the Markov property of $Y^i(t)$
that
$$
\begin{aligned}
\PP_{x_0,i}&\left\{Y_{T+r}\in\B\,\text{ and } Y(s)\in D\,\forall s\in[0,T+r]\right\}\\
=&\PP\left\{Y_{T+r}\in\B\cap\hat\D\Big|Y(T)=y\right\}\PP_{\phi_0,i}\left\{Y(T)\in dy\,\text{ and } Y(s)\in D\,\forall s\in[0,T+r]\right\}\\
\geq&\rho_i\int_{y\in D}\PP\left\{Y_{T+r}\in\B\cap\hat\D\Big|Y(T)=y\right\}\tilde\nu(dy)\\
=&\bnu(\B\cap\hat\D).
\end{aligned}
$$
Now,
let $\B$ be an open subset of $\hat\D$.
Denote $B=\{\phi(-r):\phi\in\B\}$.
Then $B$ is an open subset of $D$.
By the support theorem (see \cite[Theom 3.1]{SV})),
\begin{equation}\label{e4-lm2.3}
\PP_{y,i}\left\{Y_{T+r}\in\B\right\}>0\,\text{
for any }\,y\in B.
\end{equation}
In light of \eqref{e1-lm2.3},
\begin{equation}\label{e5-lm2.3}
\tilde\nu(B)=\PP_{\phi_0,i}\left\{ Y(0.5T)\in B,\,\text{ and }\, Y(s)\in D\,\forall s\in[0,0.5T]\right\}>0.
\end{equation}
In view of \eqref{e4-lm2.3} and \eqref{e5-lm2.3},
$$\bnu(\B)\geq \int_{y\in B}\PP_{y,i}\left\{Y_{T+r}\in\B\right\}\tilde\nu(dy)>0
$$
if $\B$ is an open subset of $\hat\D$.
\end{proof}

\begin{proof}[Proof of Lemma {\rm\ref{lm2.4}}]
Let $D=\{x\in\R^n: |x|<R+1\}$, $K_1=K_2=\{x\in\R^n: |x|<R\}$
and $M_{R,i}=\sup\{q(\phi,i):\|\phi\|\leq R+1\}<\infty$.
In view of Lemma \ref{lm2.3},
$$\PP_{\phi,i^*}\left\{Y_{T+r}\in\B, Y(t)< R+1, t\in[0,T+r]\right\}
\geq\bnu(\B)
$$
where $\bnu(\cdot)$ is defined as in Lemma \ref{lm2.3} with $i$ replaced by $i^*$.
Thus,
$$
\begin{aligned}
\PP_{\phi,i^*}&\{X_{T+r}\in\B\text{ and } \alpha(T+r)=i^*\}\\
\geq&\PP_{\phi,i^*}\{X_{T+r}\in\B\text{ and } \alpha(t)=i^*, \|X_t\|<R+1\,\forall\,t\in[0,T+r]\}\\
\geq&\E_{\phi,i^*}\left[\1_{\{Y_{T+r}\in\B,\|Y_t\|<R\,\forall\,t\in[0,T+r]\}}\exp\left(-\int_0^{T+r}q_{i^*}(Y_t)dt\right)\right]\\
\geq& \exp(-M_{R,i}(T+r))\bnu(\B).
\end{aligned}
$$
The proof is complete.
\end{proof}

\end{document}